\title{Large deviations for extreme eigenvalues of deformed Wigner random matrices}
\author{Benjamin M\textsuperscript{c}Kenna}
\numberwithin{equation}{section}
\renewcommand{\section}{\@startsection
{section}
{1}
{0mm}
{-2\baselineskip}
{1\baselineskip}
{\normalfont\large\scshape\centering}} 
\renewcommand{\subsection}{\@startsection
{subsection}
{2}
{0mm}
{-\baselineskip}
{0 \baselineskip}
{\normalfont\bf\itshape}} 
\def\@empty{}
\def\author#1{\par
    {\centering{\authorfont#1}\par\vspace*{0.05in}}
}
\def\titlefont{\fontsize{13}{15}\bfseries\boldmath\selectfont\centering{}}
\def\authorfont{\fontsize{13}{15}}
\def\abstractfont{\fontsize{8}{10}}
\let\affiliationfont\rhfont
\def\address#1{\par
    {\centering{\affiliationfont#1\par}}\par\vspace*{11pt}
}
\def\body{
\setcounter{footnote}{0}
\def\thefootnote{\alph{footnote}}
\def\@makefnmark{{$^{\rm \@thefnmark}$}}
}
\def\title#1{
    \thispagestyle{plain}
    \vspace*{-14pt}
    \vskip 79pt
    {\centering{\titlefont #1\par}}%
    \vskip 1em
}
\renewenvironment{abstract}{\par%
    \vspace*{6pt}\noindent 
    \abstractfont
    \noindent\leftskip10pt\rightskip10pt
}{%
  \par}
\newcommand{\C}{\mathbb{C}}
\newcommand{\E}{\mathbb{E}}
\newcommand{\N}{\mathbb{N}}
\renewcommand{\P}{\mathbb{P}}
\newcommand{\R}{\mathbb{R}}
\newcommand{\abs}[1]{\left\vert #1 \right\vert}
\newcommand{\mc}[1]{\mathcal{#1}}
\newcommand{\ms}[1]{\mathscr{#1}}
\newcommand{\ol}[1]{\overline{#1}}
\newcommand{\ip}[1]{\left\langle #1 \right\rangle}
\newcommand{\diff}{\mathop{}\!\mathrm{d}}
\DeclareMathOperator{\tr}{tr}
\DeclareMathOperator{\supp}{supp}
\DeclareMathOperator{\Id}{Id}
\DeclareMathOperator{\diag}{diag}
\newtheorem{thm}{Theorem}[section]
\newtheorem{cor}[thm]{Corollary}
\newtheorem{prop}[thm]{Proposition}
\newtheorem{lem}[thm]{Lemma}
\newtheorem{quest}[thm]{Question}
\theoremstyle{definition}
\newtheorem{defn}[thm]{Definition}
\newtheorem{assn}{Assumption}
\theoremstyle{remark}
\newtheorem{rem}[thm]{Remark}
\theoremstyle{definition}
\newenvironment{customhyp}[1]
  {\innercustomhyp}
  {\endinnercustomhyp}
\let\c@equation\c@thm
\numberwithin{equation}{section}
\begin{document}

~\vspace{-1.4cm}

\title{Large deviations for extreme eigenvalues of deformed Wigner random matrices}

\vspace{1cm}
\noindent

\begin{minipage}[b]{0.3\textwidth}
\hspace{3cm}
 
 \end{minipage}
 \begin{minipage}[b]{0.3\textwidth}
 \author{Benjamin M\textsuperscript{c}Kenna}

\address{Courant Institute\\
   E-mail: mckenna@cims.nyu.edu}
 \end{minipage}

\begin{minipage}[b]{0.3\textwidth}

 \end{minipage}

\begin{abstract}
We present a large deviation principle at speed $N$ for the largest eigenvalue of some additively deformed Wigner matrices. In particular this includes Gaussian ensembles with full-rank general deformation. For the non-Gaussian ensembles, the deformation should be diagonal, and we assume that the laws of the entries have sharp sub-Gaussian Laplace transforms and satisfy certain concentration properties. For these latter ensembles we establish the large deviation principle in a restricted range $(-\infty,x_c)$, where $x_c$ depends on the deformation only and can be infinite. 
\end{abstract}

\vspace{-0.3cm}

\tableofcontents

\vspace{0.2cm}

\section{Introduction}\ 

\vspace{-0.5cm}

\subsection{Deformed ensembles: typical behavior.}\
In this paper, our goal is to prove a large deviation principle (LDP) for the largest eigenvalue of the random matrix
\begin{equation}
\label{eqn:model}
    X_N = \frac{W_N}{\sqrt{N}} + D_N.
\end{equation}
Here $\tfrac{W_N}{\sqrt{N}}$ lies in a particular class of real or complex Wigner matrices. Specifically, we will ask that the laws of the entries of $W_N$ have sub-Gaussian Laplace transforms with certain variances, and that these laws satisfy concentration properties. The archetypal examples of this class are the Gaussian ensembles (GOE and GUE). We also assume that $D_N$ is a deterministic matrix whose empirical spectral measure tends to a deterministic limit $\mu_D$ and whose extreme eigenvalues tend to the edges of $\mu_D$. In all of our proofs we will assume that $D_N$ is diagonal, but by rotational invariance, our results hold for the deformed Gaussian models even when $D_N$ is not diagonal. More details on our assumptions will be given in Section \ref{sec:setup}.

If we write $\lambda_1(M) \leq \cdots \leq \lambda_N(M)$ for the eigenvalues of a self-adjoint matrix $M$ and $\hat{\mu}_{M} = \frac{1}{N} \sum_{i=1}^N \delta_{\lambda_i(M)}$ for its empirical measure, it is well-known that 
\[
    \hat{\mu}_{X_N} \to \rho_{\text{sc}} \boxplus \mu_D,
\]
both almost surely and in expectation, where $\rho_{\text{sc}}$ is the semicircle law normalized as $\rho_{\text{sc}}(\diff x) = \tfrac{1}{2\pi}\sqrt{(4-x^2)_+} \diff x$ and $\mu \boxplus \nu$ is the free convolution of the probability measures $\mu$ and $\nu$ \cite{Pas1972, Voi1991}.

If $\mu$ is a compactly supported measure on $\R$, we write $\mathtt{l}(\mu)$ and $\mathtt{r}(\mu)$ for the left and right endpoints, respectively, of its support. For some special cases of our model, it is known that
\[
    \lambda_N(X_N) \to \mathtt{r}(\rho_{\text{sc}} \boxplus \mu_D) \quad \text{almost surely.}
\]
New cases will be a corollary of our large deviation principle; see Remark \ref{rem:lmax_converges} below for details.

Our model also exhibits edge universality for many choices of $D_N$; that is, the fluctuations of $\lambda_N(X_N)$, rescaled appropriately, are known to follow the Tracy-Widom distribution. This was first established by \cite{Shc2011} for the deformed GUE, if $\hat{\mu}_{D_N} \to \mu_D$ quickly ($d(\hat{\mu}_{D_N},\mu_D) = O(N^{-2/3-\epsilon})$ is enough, where $d$ is defined in Equation \eqref{eqn:distance}) and without outliers. The convergence-rate assumption was removed by \cite{CapPec2016}, which also allowed a finite number of outliers in a controlled way, under a technical assumption implying that $\mu_D$ does not decay too quickly near its edges. The assumption of Gaussianity was removed by \cite{LeeSch2015}, under a similar technical assumption on $\mu_D$.

\subsection{History of large deviations in random matrix theory.}\
The history of LDPs for random matrix theory is fairly sparse. The first result, from \cite{BenGui1997}, is for the empirical measure of the Gaussian ensembles. The first LDP for the largest eigenvalue of a random matrix ensemble, namely for the GOE, appeared in \cite{BenDemGui2001}. We mention also \cite{FeyVanKlo2008} for the largest eigenvalue of thin sample covariance matrices, and \cite{BorCap2014} for the empirical measure and \cite{Aug2016} for the largest eigenvalue of Wigner matrices whose entries have tails heavier than Gaussian.

There are also several results for the large deviations of deformed random matrices. For example, the paper \cite{GuiZei2002} studied large deviations of the empirical measure of full-rank deformations of Gaussian ensembles, making rigorous a prediction from \cite{Mat1994}. The largest eigenvalue of a rank-one deformation of a Gaussian ensemble was studied by \cite{Mai2007}; this result was recovered as the time-one marginal of a large deviation principle for Hermitian Brownian motions in \cite{DonMai2012}. Finite-rank deformations, rather than rank-one deformations, were covered in \cite{BenGuiMai2012}.

Our work builds on the recent papers \cite{GuiHui2018} and \cite{GuiMai2018}. These works use techniques discussed below to establish LDPs for extreme eigenvalues, treating respectively sharp sub-Gaussian Wigner matrices and the free-convolution model $A+UBU^\ast$ (with $U$ Haar orthogonal or Haar unitary). This method was also adapted in \cite{BirGui2019} to study joint large deviations of the largest eigenvalue and of one component of the corresponding eigenvector for rank-one deformations of Gaussian ensembles. Very recently, \cite{AugGuiHus2019} adapted this method to study non-sharp sub-Gaussian Wigner matrices; see Remark \ref{rem:ssg} below for a precise explanation of this terminology.

\subsection{Large deviations for ensembles with full-rank deformations.}\
In many large-deviations proofs, one wants to tilt measures by a Laplace transform. The insight of the paper \cite{GuiHui2018} was that the appropriate Laplace transform in our context is the so-called \emph{(rank-one) spherical integral}
\begin{equation}
\label{eqn:introduce_sph_int}
	\E_e[e^{N\theta \ip{e,Me}}].
\end{equation}
Here $M$ is an $N \times N$ self-adjoint matrix, $\theta \geq 0$ is the argument of the Laplace transform, and the integration $\E_e$ is over vectors $e$ uniform on the unit sphere $\mathbb{S}^{N-1}$ (we take $\mathbb{S}^{N-1} \subset \R^N$ if $M$ is real, or $\mathbb{S}^{N-1} \subset \C^N$ if $M$ is complex, so that \eqref{eqn:introduce_sph_int} is real). If $M$ is a random matrix, then \eqref{eqn:introduce_sph_int} is a random variable. This is a special case of the famous Harish-Chandra/Itzykson/Zuber integral.

For an LDP for the model \eqref{eqn:model}, we encounter two technical challenges. If we write $\P_N$ for the law of $X_N$ and $\E_{X_N}$ for the corresponding expectation (and define $\E_{W_N}$ in the obvious way), then the main challenge is the computation of
\begin{align}
\label{eqn:introduce_free_energy}
    \lim_{N \to \infty} \frac{1}{N} \log \E_{X_N}[\E_e[e^{N\theta\ip{e,X_Ne}}]] = \lim_{N \to \infty} \frac{1}{N} \log \E_e[\E_{W_N}[e^{\sqrt{N}\theta\ip{e,W_Ne}}] \cdot e^{N\theta\ip{e,D_Ne}}].
\end{align}
The term $\E_{X_N}[\E_e[e^{N\theta\ip{e,X_Ne}}]]$ appears as a normalization constant when tilting the measure, so its logarithmic asymptotics appear as part of the rate function. To understand these asymptotics when $W_N$ is not Gaussian, we use the method of \cite[Lemma 3.2]{GuiHui2018} to understand $\E_{W_N}[e^{\sqrt{N}\theta\ip{e,W_Ne}}]$ pointwise for unit vectors $e$ that are delocalized in an appropriate sense. We combine this with the new result (see Lemma \ref{lem:delocalized_sph_int} below) 
\[
    \text{for $\theta$ small enough depending on $\mu_D$,} \qquad \lim_{N \to \infty} \frac{1}{N} \log \left[ \frac{\E_e[\mathbf{1}_{e \text{ delocalized}} e^{N\theta\ip{e,D_Ne}}]}{\E_e[e^{N\theta\ip{e,D_Ne}}]}\right] = 0.
\]
The qualifier ``for $\theta$ small enough'' means that, via this argument, we can only obtain large-deviations asymptotics of events that localize $\lambda_N(X_N)$ below some critical threshold $x_c$, which depends on the deformation $\mu_D$ only. We show $x_c \geq \mathtt{r}(\rho_{\text{sc}} \boxplus \mu_D)$ with strict inequality except in degenerate cases, and that $x_c$ can be infinite. For example, $x_c = +\infty$ when $\mu_D$ is the uniform measure on an interval. For the Gaussian ensembles, the limit in \eqref{eqn:introduce_free_energy} is directly computable for every $\theta \geq 0$ without recourse to this delocalization problem, so our results for those models are stronger.

The second difficulty is that we need a concentration result of the form
\begin{equation}
\label{eqn:introduce_concentration}
    \lim_{N \to \infty} \frac{1}{N} \log \P_N(d(\hat{\mu}_{X_N}, \rho_{\text{sc}} \boxplus \mu_D) > N^{-\kappa}) = -\infty
\end{equation}
for $\kappa > 0$ small enough, where $d$ is defined in \eqref{eqn:distance}. With $\rho_{\text{sc}} \boxplus \mu_D$ replaced with $\E[\hat{\mu}_{X_N}]$, this is standard concentration of linear statistics \cite{GuiZei2000}, easily extended to our model. To approximate $\E[\hat{\mu}_{X_N}]$ with $\rho_{\text{sc}} \boxplus \mu_D$, we use local laws for deformed ensembles \cite{LeeSchSteYau2016, LeeSch2015, ErdKruSch2019}. Our argument is slightly technical, since these local laws let us approximate $\E[\hat{\mu}_{X_N}]$, not directly by $\rho_{\text{sc}} \boxplus \mu_D$, but by a measure close to $\rho_{\text{sc}} \boxplus \hat{\mu}_{D_N}$, so several intermediate comparisons are needed.

The organization of the paper is as follows: In Section \ref{sec:setup}, we state our assumptions and main result with commentary and examples. In Section \ref{sec:overview}, we provide background on spherical integrals, introduce the tilted measures, and provide a high-level overview of the technique as well as proofs of weak-large-deviations upper and lower bounds. These arguments rely on several key lemmas, the proofs of which make up the remaining three sections. In Section \ref{sec:free_energy}, we address the first technical issue discussed above. In Section \ref{sec:concentration}, we prove exponential tightness for our model, then address the second technical issue discussed above. In Section \ref{sec:rate_fn}, we establish properties of the rate function. Throughout, our results are stated for both the real and complex cases, but we only give proofs in the real case. The proofs in the complex case require only minor modifications.

\subsection*{Conventions.}\
We use the shorthand $\beta$ for the symmetry class at hand: $\beta = 1$ refers to real symmetric matrices and $\beta = 2$ refers to complex Hermitian matrices. Our norm $\|M\|$ on matrices is the operator norm $\|M\| = \sup_{\|u\|_2 = 1} \|Mu\|_2$. Our metric $d$ on probability measures will be the Dudley distance (also called the bounded-Lipschitz distance), given by
\begin{equation}
\label{eqn:distance}
    d(\mu,\nu) = \sup\left\{ \abs{\int f d(\mu - \nu)} : \sup_{x \neq y} \frac{\abs{f(x)-f(y)}}{\abs{x-y}} + \|f\|_{L^\infty} \leq 1\right\}.
\end{equation}
Recall that this distance metrizes weak convergence.

Finally, we recall the Stieltjes transform and the Voiculescu $R$-transform of a compactly supported probability measure. If $\mu$ is a probability measure on $\R$ the convex hull of whose support is $[a,b]$, then we will normalize its Stieltjes transform $G_\mu$ as
\[
	G_\mu(y) = \int \frac{\mu(\diff t)}{y-t}.
\]
If we write $G_\mu(a) = \lim_{y \uparrow a} G_\mu(y)$ and $G_\mu(b) = \lim_{y \downarrow b} G_\mu(y)$, then it can be shown that $G_\mu$ is a bijection from $\R \setminus [a,b]$ to $(G_\mu(a),G_\mu(b)) \setminus \{0\}$. We will write 
\[
    K_\mu : (G_\mu(a),G_\mu(b)) \setminus \{0\} \to \R \setminus [a,b]
\]
for its functional inverse, and write
\[
	R_\mu(y) = K_\mu(y) - \frac{1}{y}
\]
for its Voiculescu $R$-transform, which linearizes free convolution: $R_{\mu \boxplus \nu} = R_{\mu} + R_{\nu}$.

\subsection*{Acknowledgements.}\
The author would like to thank Paul Bourgade for many helpful discussions, and Alice Guionnet and Ofer Zeitouni for explaining that one assumption in an early version of this paper was superfluous.


\section{Main result}
\label{sec:setup}

\subsection{Assumptions.}\
We first present our assumptions on $D_N$, which will be made throughout, even though we will only state them in the presentation of the main results.

\begin{assn}
\label{assns:basic}
The matrix $D_N$ is real, diagonal, and deterministic, and its empirical measure $\hat{\mu}_{D_N}$ tends weakly as $N \to \infty$ to a compactly supported probability measure $\mu_D$. Furthermore,
\begin{align*}
    \lambda_N(D_N) \to \mathtt{r}(\mu_D), \\
    \lambda_1(D_N) \to \mathtt{l}(\mu_D).
\end{align*}
\end{assn}

\begin{assn}
\label{assns:quantitative}
There exist $C > 0$ and $\epsilon_0 > 0$ such that
\[
    d(\hat{\mu}_{D_N}, \mu_{D}) \leq CN^{-\epsilon_0}.
\]
\end{assn}

\begin{rem}
We emphasize that $\mu_D$ is allowed to be quite poorly behaved. For example, it can be singular with respect to Lebesgue measure. It can also have disconnected support. Notice that Assumption \ref{assns:quantitative} is fairly mild. For example, if $\mu_D$ has a density and the entries of $D_N$ are the $\frac{1}{N}$-quantiles of $\mu_D$, then in fact $d(\hat{\mu}_{D_N},\mu_D) = O(\tfrac{1}{N})$. If the entries of $D_N$ were obtained from i.i.d. random variables, we would have $d(\hat{\mu}_{D_N},\mu_D) = O(\tfrac{1}{\sqrt{N}})$. 

In fact, the proof of Lemma \ref{lem:cdf_diff} below shows that, instead of Assumption \ref{assns:quantitative}, it suffices to bound the difference between the Stieltjes transforms of $\hat{\mu}_{D_N}$ and $\mu_D$ at distance $N^{-\delta}$ from the real line, for $\delta > 0$ small enough.
\end{rem}

We will write the Laplace transform of a measure $\mu$ on $\C$ as
\[
    T_{\mu}(t) := \int e^{\Re(z\ol{t})} \mu(\diff z).
\]
If in fact $\mu$ is supported on $\R$ and $t$ is real, this reduces to the familiar
\[
    T_{\mu}(t) = \int e^{tx} \mu(\diff x).
\]

We assume that $\tfrac{W_N}{\sqrt{N}}$ is a Wigner matrix, by which we mean that its entries are independent up to the self-adjoint condition. Our assumptions on the Wigner part are named, rather than numbered, to emphasize that our results apply under either of them, rather than both of them. 

\begin{customhyp}{Gaussian}
\label{hyp:Gaussian}
The matrix $\frac{W_N}{\sqrt{N}}$ is distributed according to the Gaussian Orthogonal Ensemble if $\beta = 1$, or the Gaussian Unitary Ensemble if $\beta = 2$. (That is, the law of $W_N$ on the space of symmetric/Hermitian matrices has density proportional to $\exp(-\beta\tr(W_N^2)/4)$.)
\end{customhyp}

\begin{customhyp}{SSGC}
\label{hyp:SSGC}
(This labelling stands for ``sharp sub-Gaussian and concentrates,'' and matches the assumptions of \cite{GuiHui2018}.)

Write $\mu_{i,j}^N$ for the law of the $(i,j)$th entry of $W_N$.
\begin{enumerate}
\item Assume \textbf{both} of the following. 
\begin{itemize}
\item The first and second moments match those of the relevant Gaussian ensemble. In our normalization, this means that for every $N \in \N$ and $i, j \in \llbracket 1, N \rrbracket$, if $\beta = 1$ we have
\[
    \int x \mu_{i,j}^N (\diff x) = 0, \quad \int x^2 \mu_{i,j}^N (\diff x) = 1+\delta_{ij},
\]
whereas if $\beta = 2$ and $i \neq j$ we have
\begin{align*}
    \int \Re(z) \mu_{i,j}^N(\diff z) &= \int \Im(z) \mu_{i,j}^N(\diff z) = \int \Re(z)\Im(z) \mu_{i,j}^N(\diff z) = 0, \\
    \int \Re(z)^2 \mu_{i,j}^N(\diff z) &= \int \Im(z)^2 \mu_{i,j}^N(\diff z) = \frac{1}{2}.
\end{align*}
If $\beta = 2$, then each $\mu_{i,i}^N$ is supported on $\R$, with $\int x \mu_{i,i}^N(\diff x) = 0$ and $\int x^2\mu_{i,i}^N(\diff x) = 1$. 
\item For every $N \in \N$ and $i,j \in \llbracket 1, N \rrbracket$, the measure $\mu_{i,j}^N$ has a \emph{sharp sub-Gaussian Laplace transform}:
\begin{equation}
\label{eqn:ssg}
    \text{for all } \begin{cases} \text{$t \in \R$ if $\beta = 1$} \\ \text{$t \in \C$ if $\beta = 2$} \end{cases}, \quad T_{\mu_{i,j}^N}(t) \leq \exp\left(\frac{\abs{t}^2(1+\delta_{ij})}{2\beta}\right).
\end{equation}
\end{itemize}
\item In addition, assume \textbf{one} of the following concentration-type hypotheses.
\begin{itemize}
\item There exists a constant $c$ independent of $N$ such that, for all $N \in \N$ and all $i,j \in \llbracket 1, N \rrbracket$, the law $\mu_{i,j}^N$ satisfies a log-Sobolev inequality with constant $c$.
\item There exists a compact set $K$ independent of $N$ (real if $\beta = 1$, or complex if $\beta = 2$) such that, for all $N \in \N$ and all $i,j \in \llbracket 1, N \rrbracket$, the law $\mu_{i,j}^N$ is supported in $K$.
\end{itemize}
\end{enumerate}
\end{customhyp}

\begin{rem}
\label{rem:ssg}
A list of examples satisfying the \ref{hyp:SSGC} Hypothesis is provided in \cite{GuiHui2018}. Among these examples are real matrices whose entries follow the Rademacher law $\frac{1}{2}(\delta_{-1}+\delta_{+1})$ or the uniform law on $[-\sqrt{3},\sqrt{3}]$ (appropriately rescaled on the diagonal).

In the literature, it is common to call a centered measure $\mu$ on $\R$ with unit variance \emph{sub-Gaussian} whenever
\[
    A := 2\sup_{t \in \R} \frac{1}{t^2} \log T_\mu(t)
\]
is finite. We emphasize that we are asking for more; in \eqref{eqn:ssg} we require $A = 1$ (off the diagonal, with appropriate modifications otherwise), and following \cite{GuiHui2018} we call such measures \emph{sharp sub-Gaussian}. This is a strict subclass; for example, the law of $\tfrac{1}{p}BG$, where $B \sim \text{Bernoulli}(p)$ and $G \sim \mc{N}(0,1)$ are independent, has unit variance but $A = 1/p$. This example appears in \cite{AugGuiHus2019}, which treats the general case $A > 1$, with zero deformation.
\end{rem}

\subsection{Main result.}\

\begin{defn}
For a compactly supported measure $\nu$, a parameter $\theta \geq 0$, and a real number $\mathscr{M} \geq \mathtt{r}(\nu)$, define
\begin{align}
\label{eqn:limitJ}
	J(\nu,\theta,\mathscr{M}) = J^{(\beta)}(\nu,\theta,\mathscr{M}) = 
	\begin{cases} 
	    \frac{\beta}{2} \int_0^{\frac{2}{\beta}\theta} R_\nu(t) \diff t & \text{if } 0 \leq \frac{2}{\beta}\theta \leq G_\nu(\mathscr{M}), \\
	    \theta\mathscr{M} - \frac{\beta}{2}\left[1+\log\left(\frac{2}{\beta}\theta\right)\right] - \frac{\beta}{2}\int \log(\mathscr{M}-y) \nu(\diff y) & \text{if } \frac{2}{\beta}\theta \geq G_\nu(\mathscr{M}).
	\end{cases}
\end{align}
(If $\ms{M} = \mathtt{r}(\nu)$, we recall our convention $G_\nu(\mathtt{r}(\nu)) = \lim_{y \downarrow \mathtt{r}(\nu)} G_\nu(y)$, which is possibly infinite.) In Section \ref{sub:sph_int} we will explain how this function arises as the limit of appropriately normalized spherical integrals.

For $x \geq \mathtt{r}(\rho_{\text{sc}} \boxplus \mu_D)$ and $\theta \geq 0$, we define
\[
    I^{(\beta)}(x,\theta) = J(\rho_{\text{sc}} \boxplus \mu_D, \theta, x) - \frac{\theta^2}{\beta} - J(\mu_D,\theta,\mathtt{r}(\mu_D))
\]
and then set
\[
    I^{(\beta)}(x) = \begin{cases} 
        +\infty & \text{if } x < \mathtt{r}(\rho_{\text{sc}} \boxplus \mu_D), \\
        \sup_{\theta \geq 0} I^{(\beta)}(x,\theta) & \text{if } x \geq \mathtt{r}(\rho_{\text{sc}} \boxplus \mu_D). \end{cases}
\]
\end{defn}
We will show below that
\[
    I^{(2)}(x) = 2I^{(1)}(x)
\]
for all measures $\mu_D$.

To state our result, we will need the following critical threshold.

\begin{defn}
Given the compactly supported measure $\mu_D$, define the real number $x_c$ by
\begin{align*}
    x_c &= x_c(\mu_D) = \begin{cases} \mathtt{r}(\mu_D) + G_{\mu_D}(\mathtt{r}(\mu_D)) & \text{if } G_{\mu_D}(\mathtt{r}(\mu_D)) < +\infty, \\ +\infty & \text{otherwise}. \end{cases}
\end{align*}
It will be shown in Proposition \ref{prop:study_of_rate_fn} below that $x_c \geq \mathtt{r}(\rho_{\text{sc}} \boxplus \mu_D)$, with equality if and only if an inequality involving the Stieltjes transform of $\mu_D$ degenerates.
\end{defn}

The main result of the paper is the following:

\begin{thm}
\label{thm:ldp}
Suppose that Assumptions \ref{assns:basic} and \ref{assns:quantitative} hold.
\begin{enumerate}
\item If the \ref{hyp:Gaussian} Hypothesis holds, then the law of the largest eigenvalue $\lambda_N(X_N)$ satisfies a large deviation principle at speed $N$ with the good rate function $I^{(\beta)}(x)$. By rotational invariance, we have the same result when $D_N$ is not diagonal but simply symmetric (if $\beta = 1$) or Hermitian (if $\beta = 2$) and satisfies the rest of the requirements of Assumption \ref{assns:basic}.
\item If instead the \ref{hyp:SSGC} Hypothesis holds, then the law of the largest eigenvalue $\lambda_N(X_N)$ satisfies what we will call a ``restricted large deviation principle on $(-\infty,x_c)$'' at speed $N$ with the good rate function $I^{(\beta)}(x)$. This means the following:
\begin{itemize}
\item For every closed set $F \subset (-\infty,x_c)$, we have
\begin{equation}
\label{eqn:restricted_ldp_ub}
    \limsup_{N \to \infty} \frac{1}{N} \log \P_N(\lambda_N(X_N) \in F) \leq -\inf_{x \in F} I^{(\beta)}(x).
\end{equation}
\item For every open set $G \subset (-\infty,x_c)$, we have 
\begin{equation}
\label{eqn:restricted_ldp_lb}
    \liminf_{N \to \infty} \frac{1}{N} \log \P_N(\lambda_N(X_N) \in G) \geq -\inf_{x \in G} I^{(\beta)}(x).
\end{equation}
\end{itemize}
\item In particular, if the \ref{hyp:SSGC} Hypothesis holds and $\mu_D$ is such that $x_c = +\infty$, then the law of the largest eigenvalue $\lambda_N(X_N)$ satisfies a large deviation principle at speed $N$ with the good rate function $I^{(\beta)}(x)$ in the usual sense.
\end{enumerate}
\end{thm}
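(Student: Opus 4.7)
The plan is to follow the spherical-integral tilting strategy of \cite{GuiHui2018,GuiMai2018}, adapted to the deformed ensemble \eqref{eqn:model}. Set
\[
    Z_N(\theta) := \E_{X_N}[\E_e[e^{N\theta \ip{e, X_N e}}]],
\]
which plays the role of a partition function. The weak LDP is assembled from three ingredients: (a) a pointwise asymptotic
\[
    \frac{1}{N}\log \E_e[e^{N\theta \ip{e, M e}}] = J(\hat{\mu}_M, \theta, \lambda_N(M)) + o(1)
\]
for deterministic self-adjoint $M$ whose spectrum is bounded and whose empirical measure approaches a limit; (b) the limit of $\tfrac{1}{N}\log Z_N(\theta)$, computed via Fubini as in \eqref{eqn:introduce_free_energy}; and (c) the exponential-scale concentration of $\hat{\mu}_{X_N}$ around $\rho_{\text{sc}} \boxplus \mu_D$, as in \eqref{eqn:introduce_concentration}.

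For the weak upper bound, fix $x$ and a small $\delta > 0$. By (c), I restrict to the event on which $\hat{\mu}_{X_N}$ is close to $\rho_{\text{sc}} \boxplus \mu_D$, so that (a) gives a lower bound of order $\exp(NJ(\rho_{\text{sc}} \boxplus \mu_D, \theta, x) + o(N))$ on the spherical integral whenever $\lambda_N(X_N) \in (x-\delta, x+\delta)$. A Chebyshev inequality then yields
\[
    \P_N(\lambda_N(X_N) \in (x-\delta, x+\delta)) \leq Z_N(\theta) \cdot e^{-N J(\rho_{\text{sc}}\boxplus\mu_D, \theta, x) + o(N)}.
\]
Under the \ref{hyp:Gaussian} Hypothesis, $\E_{W_N}[e^{\sqrt{N}\theta\ip{e, W_N e}}] = e^{N\theta^2/\beta}$ exactly, so $\tfrac{1}{N}\log Z_N(\theta) \to \theta^2/\beta + J(\mu_D, \theta, \mathtt{r}(\mu_D))$ simply by convergence of the $D_N$-spherical integral. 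Under the \ref{hyp:SSGC} Hypothesis, the sharp sub-Gaussian bound yields the same expression as an upper bound for all $\theta$, while the matching lower bound uses Lemma~\ref{lem:delocalized_sph_int} and therefore only applies for $\tfrac{2}{\beta}\theta < G_{\mu_D}(\mathtt{r}(\mu_D))$. Optimizing over $\theta$ in the valid range produces $-I^{(\beta)}(x)$, on all of $\R$ in the Gaussian case and on $(-\infty, x_c)$ under \ref{hyp:SSGC}.

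For the weak lower bound, I would tilt $\P_N$ by $Z_N(\theta^\ast)^{-1} \E_e[e^{N\theta^\ast \ip{e, X_N e}}]$, where $\theta^\ast = \theta^\ast(x)$ is the argmax in $I^{(\beta)}(x) = \sup_\theta I^{(\beta)}(x, \theta)$. A Cram\'er-style argument combined with concentration under the tilted law shows that $\lambda_N(X_N)$ concentrates at $x$ under the tilt, which gives the lower bound $-I^{(\beta)}(x)$. Exponential tightness (so that $\limsup_N \tfrac{1}{N}\log \P_N(\lambda_N(X_N) > M) \to -\infty$ as $M \to \infty$) comes from a Chebyshev bound at any fixed $\theta > 0$ together with the boundedness of $\lambda_N(D_N)$. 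Exponential tightness promotes the weak LDP to the full LDP of Part 1 and also yields Part 3 immediately. For Part 2, the upper bound on a closed set $F \subset (-\infty, x_c)$ is obtained by covering $F \cap [-M, x_c - \eta]$ by finitely many small windows on which the weak upper bound applies, and using exponential tightness to discard the tail $(-\infty, -M)$. Goodness of $I^{(\beta)}$ and the identity $I^{(2)} = 2I^{(1)}$ are handled separately in Section~\ref{sec:rate_fn}.

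The main obstacle is the free-energy limit under \ref{hyp:SSGC}. The upper bound $\E_{W_N}[e^{\sqrt{N}\theta\ip{e, W_N e}}] \leq e^{N\theta^2/\beta}$ is immediate from sharp sub-Gaussianity, but the pointwise lower bound $\E_{W_N}[e^{\sqrt{N}\theta \ip{e, W_N e}}] = e^{N\theta^2/\beta (1 + o(1))}$ available from \cite[Lemma 3.2]{GuiHui2018} holds only when $e$ is sufficiently delocalized. To integrate this against $\E_e[e^{N\theta\ip{e, D_N e}}]$ and recover a matching lower bound, one must argue that delocalized $e$ dominate this latter integral; this is the content of Lemma~\ref{lem:delocalized_sph_int} and is precisely the step that forces $\tfrac{2}{\beta}\theta < G_{\mu_D}(\mathtt{r}(\mu_D))$. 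Via the calibration $\theta \leftrightarrow x$ determining the optimizer $\theta^\ast(x)$, this translates into the constraint $x < x_c$, and is the essential obstruction to extending Part 2 beyond $(-\infty, x_c)$.
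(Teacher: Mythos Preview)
Your proposal is correct and follows essentially the same route as the paper: tilt by rank-one spherical integrals, compute the free-energy limit $\tfrac{1}{N}\log Z_N(\theta) \to \theta^2/\beta + J(\mu_D,\theta,\mathtt{r}(\mu_D))$ (exactly under the \ref{hyp:Gaussian} Hypothesis, and under the \ref{hyp:SSGC} Hypothesis via the sharp sub-Gaussian upper bound plus the delocalization Lemma~\ref{lem:delocalized_sph_int} for the lower bound, which forces $\theta < \theta_c$), combine with the exponential-scale concentration \eqref{eqn:introduce_concentration} to localize $J_N(X_N,\theta)$ on small balls, and then assemble the weak LDP together with exponential tightness. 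One refinement the paper records that you slightly understate: the weak \emph{upper} bound for $\P_N$ itself uses only the upper bound on $Z_N(\theta)$ (valid for all $\theta \geq 0$ under either hypothesis), so \eqref{eqn:restricted_ldp_ub} in fact holds for every closed $F \subset \R$, and the restriction to $(-\infty,x_c)$ enters only through the lower bound, where one needs $\theta_x < \theta_c$ to invoke the free-energy lower bound when showing that $\lambda_N$ concentrates at $x$ under $\P_N^{\theta_x}$.
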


\begin{rem}
\label{rem:lmax_converges}
See Proposition \ref{prop:study_of_rate_fn} below for a more in-depth study of the function $I^{(\beta)}(x)$. There, it is shown that $I^{(\beta)}(x)$ has a unique minimizer at $x = \mathtt{r}(\rho_{\text{sc}} \boxplus \mu_D)$, where it takes the value zero. In particular, if the \ref{hyp:Gaussian} Hypothesis holds, or if the \ref{hyp:SSGC} Hypothesis holds and $\mu_D$ is such that $x_c = +\infty$, then 
\begin{equation}
\label{eqn:aslimit}
    \lambda_N(X_N) \to \mathtt{r}(\rho_{\text{sc}} \boxplus \mu_D) \quad \text{almost surely.}
\end{equation}
This result appears to be new in the real case when $\rho_{\text{sc}} \boxplus \mu_D$ is multicut, and in the complex non-Gaussian case when $\rho_{\text{sc}} \boxplus \mu_D$ is multicut and $(D_N)_{N=1}^\infty$ has ``internal outliers'' between the connected components of $\supp(\mu_D)$ that persist as $N \to \infty$. (Recall that we forbid ``external outliers'' by assuming $\lambda_N(D_N) \to \mathtt{r}(\mu_D)$ and $\lambda_1(D_N) \to \mathtt{l}(\mu_D)$.) In the literature Equation \eqref{eqn:aslimit} appears as an easy corollary of edge universality results, or as a special case of BBP results when the deforming matrix $D_N$ has no external outliers. For example, it follows from \cite{CapPec2016} for deformed GUE, possibly multicut with internal outliers, under some assumptions about the decay rate of $\mu_D$ near its edges; from \cite{LeeSch2015} for general real or complex noise if $\mu_D$ is such that $\rho_{\text{sc}} \boxplus \mu_D$ is supported on a single interval with square-root decay at its two edges; and from \cite{BelCap2017} in the complex (and possibly multicut) case with no outliers. Of course, all of these papers achieve much more.
\end{rem}

\begin{rem}
The proof of the ``restricted LDP,'' i.e., of Equations \eqref{eqn:restricted_ldp_lb} and \eqref{eqn:restricted_ldp_ub}, follows in the classical way from estimates of small-ball probabilities via a weak large deviation principle and exponential tightness, except that we can only lower-bound small-ball probabilities $\P_N(\abs{\lambda_N(X_N) - x} < \delta)$ for $x < x_c$ rather than $x \in \R$. However, we can upper-bound these probabilities for all $x$ (see Theorem \ref{thm:wkldpub}), so Equation \eqref{eqn:restricted_ldp_ub} actually holds for all closed $F \subset \R$, not just $F \subset (-\infty,x_c)$. 
\end{rem}

\begin{rem}
Of course, one would prefer to write the rate function non-variationally, and we can do this when the argument is at or above the critical threshold $x_c(\mu_D)$. Proposition \ref{prop:study_of_rate_fn} shows that, for all $x > \mathtt{r}(\rho_{\text{sc}} \boxplus \mu_D)$, the supremum in the definition of $I(x)$ is achieved at a unique $\theta_x$. For $x \geq x_c$ (which is relevant for the Gaussian case), this $\theta_x$ is given explicitly as $\theta_x = \tfrac{\beta}{2}(x-\mathtt{r}(\mu_D))$; thus if $x \geq x_c(\mu_D)$,
\begin{align*}
    \text{if } x \geq x_c(\mu_D), \qquad I^{(\beta)}(x) = \frac{\beta}{2}\left[\frac{(x-\mathtt{r}(\mu_D))^2}{2} - \int\log(x-y)(\rho_{\text{sc}} \boxplus \mu_D)(\diff y) + \int \log(\mathtt{r}(\mu_D)-y)\mu_D(\diff y) \right].
\end{align*}
(If $x_c(\mu_D) < \infty$, then $\int \log(\mathtt{r}(\mu_D) - y)\mu_D(\diff y) < \infty$.) But for subcritical $x$ values, $\theta_x$ is defined implicitly in the proof of Proposition \ref{prop:study_of_rate_fn} as the unique solution of the constrained problem
\begin{equation}
\label{eqn:constrained_thetax}
    \frac{2}{\beta} \theta_x + K_{\mu_D}\left(\frac{2}{\beta} \theta_x\right) = x \qquad \text{subject to} \qquad \theta_x \in \left(\frac{\beta}{2}G_{\rho_{\text{sc}} \boxplus \mu_D}(\mathtt{r}(\rho_{\text{sc}} \boxplus \mu_D)),\frac{\beta}{2}G_{\mu_D}(\mathtt{r}(\mu_D))\right).
\end{equation}
We have not found a way to solve this constrained problem explicitly, nor to write $I^{(\beta)}(x,\theta_x)$ explicitly at its solution. If the domain of $\theta_x$ in the constraint were instead $(0,\frac{\beta}{2}G_{\rho_{\text{sc}} \boxplus \mu_D}(\mathtt{r}(\rho_{\text{sc}} \boxplus \mu_D)))$, the equation would simplify to $K_{\rho_{\text{sc}} \boxplus \mu_D}(\tfrac{2}{\beta}\theta_x) = x$, which has the solution $\theta_x = \frac{\beta}{2}G_{\rho_{\text{sc}} \boxplus \mu_D}(x)$. But $K_{\rho_{\text{sc}} \boxplus \mu_D}(\cdot)$ is not generally guaranteed to exist for arguments larger than $G_{\rho_{\text{sc}} \boxplus \mu_D}(\mathtt{r}(\rho_{\text{sc}} \boxplus \mu_D))$, and even when extendable it may not be globally invertible.

Thus our rate function remains implicit for subcritical $x$ values. Nevertheless, in some simple cases the constrained problem can be solved explicitly; two examples are given below in Sections \ref{sub:first_example} and \ref{sub:second_example}.
\end{rem}

\begin{rem}
If $D_N = 0$, then $x_c = +\infty$,
\begin{align*}
    I^{(\beta)}(x) &= \begin{cases} +\infty & \text{if } x < 2 \\ \sup_{\theta \geq 0}\left\{J(\rho_{\text{sc}},\theta,x) - \frac{\theta^2}{\beta}\right\} & \text{if } x \geq 2 \end{cases} = \begin{cases} +\infty & \text{if } x < 2 \\ \frac{\beta}{2} \int_2^x \sqrt{t^2-4} \diff t & \text{if } x \geq 2, \end{cases}
\end{align*}
and we recover \cite[Theorems 1.4 and 1.5]{GuiHui2018}, which in particular includes the classical LDP for the Gaussian ensembles. (The last equality in the above display is true by \cite[Section 4.1]{GuiHui2018}.) Notice that we get the same rate function if $D_N$ is not identically zero but rather $\|D_N\| \to 0$ sufficiently quickly.
\end{rem}

\begin{rem}
One wants to recover large deviations for BBP-type problems, so it is tempting to conjecture that, if the largest eigenvalue of $D_N$ tends not to $\mathtt{r}(\mu_D)$ but to some $\rho > \mathtt{r}(\mu_D)$, then an LDP should hold for $\lambda_N(X_N)$ at speed $N$ with the good rate function
\[
    \tilde{I}^{(\beta)}(x) = \begin{cases} +\infty & \text{if } x < \mathtt{r}(\rho_{\text{sc}} \boxplus \mu_D) \\ \sup_{\theta \geq 0} \left\{J(\rho_{\text{sc}} \boxplus \mu_D, \theta, x) - \frac{\theta^2}{\beta} - J(\mu_D,\theta,\rho)\right\} & \text{otherwise}. \end{cases}
\]
But, at least for certain simple situations, such a conjecture would be wrong. For example, suppose that $\tfrac{W_N}{\sqrt{N}}$ is distributed according to the GOE (if $\beta = 1$) or the GUE (if $\beta = 2$), that $\mu_D = \delta_0$ (so that $\rho_{\text{sc}} \boxplus \mu_D = \rho_{\text{sc}}$), and that $D_N$ has $N-1$ zero eigenvalues with one spike at, say, $2$ for concreteness. Then it is known \cite[Theorem 1.2]{Mai2007} that $\lambda_N(X_N)$ satisfies an LDP at speed $N$ with the good rate function 
\[
    \hat{I}^{(\beta)}(x) = \begin{cases} +\infty & x < 2 \\ \frac{\beta}{4}\int_{\frac{5}{2}}^x \sqrt{z^2-4} \diff z - \beta\left(x-\frac{5}{2}\right)+\frac{\beta}{8}\left[x^2-\left(\frac{5}{2}\right)^2\right] & x \geq 2. \end{cases}
\]
(The published rate function has a typo; it is corrected in the v2 arXiv posting. We also normalize the semicircle law differently.) Notice that this vanishes uniquely at $x = \tfrac{5}{2}$, which lies outside $\supp(\rho_{\text{sc}})$ -- this model is past the BBP phase transition. But in this situation we can compute
\[
    \tilde{I}^{(\beta)}(x) = \begin{cases} +\infty & x < 2 \\ 0 & 2 \leq x \leq \frac{5}{2} \\ \hat{I}^{(\beta)}(x) & x \geq \frac{5}{2} \end{cases}
\]
It is likely that our method could be extended, as in \cite{GuiMai2018}, to models where $\lim_{N \to \infty} \lambda_N(D_N)$ is a spike below the BBP threshold, i.e., such that still $\lambda_N(X_N) \to \mathtt{r}(\rho_{\text{sc}} \boxplus \mu_D)$ almost surely. But a new idea is needed beyond the BBP threshold.
\end{rem}

\subsection{First example ($x_c < \infty$).}\
\label{sub:first_example}
If 
\[
    \mu_D(\diff x) = \frac{1}{2\pi \sigma^2} \sqrt{4\sigma^2-x^2} \mathbf{1}_{x \in [-2\sigma,2\sigma]} \diff x
\]
for some parameter $\sigma > 0$, then $\rho_{\text{sc}} \boxplus \mu_D$ is again semicircular, scaled so its support lies in $[-2\sqrt{\sigma^2+1},2\sqrt{\sigma^2+1}]$. The constrained equation $\eqref{eqn:constrained_thetax}$ can be solved explicitly, and we can calculate
\[
    I^{(\beta)}(x) = \begin{cases} +\infty & \text{if } x < \mathtt{r}(\rho_{\text{sc}} \boxplus \mu_D) = 2\sqrt{\sigma^2+1} \\ \beta\left[\frac{x\sqrt{x^2-4(1+\sigma^2)}}{4(1+\sigma^2)} + \log \left( \frac{2\sqrt{1+\sigma^2}}{x+\sqrt{x^2-4(1+\sigma^2)}} \right)\right] & \text{if } 2\sqrt{\sigma^2+1} \leq x \leq 2\sigma+\frac{1}{\sigma} = x_c \\ 
    \beta\left[\frac{(x-2\sigma)^2}{4} + \frac{x\sqrt{x^2-4(1+\sigma^2)}-x^2}{8(1+\sigma^2)} + \frac{1}{2} \log \left( \frac{2\sigma}{x+\sqrt{x^2-4(1+\sigma^2)}} \right) + \frac{1}{2}\right] & \text{if } x \geq 2\sigma+\frac{1}{\sigma} = x_c. \end{cases}
\]
Notice that $I^{(\beta)}(x)$ is $C^2$ but no better at $x_c$, which is perhaps surprising. Figure \ref{fig:semicircle} plots this function when $\beta = 1$ and $\sigma = 1$ (i.e., when $\mu_D$ is the usual semicircle law supported on $[-2,2]$). Here $\mathtt{r}(\rho_{\text{sc}} \boxplus \mu_D) = 2\sqrt{2} \approx 2.83$, $x_c = 3$, and $I^{(\beta)}(x_c) \approx 0.03 \cdot \beta$. Under the \ref{hyp:SSGC} Hypothesis, we would be able to estimate, say, $\P_N(\lambda_N \in (2.9,2.95))$ but not $\P_N(\lambda_N \in (2.9,3.1))$.

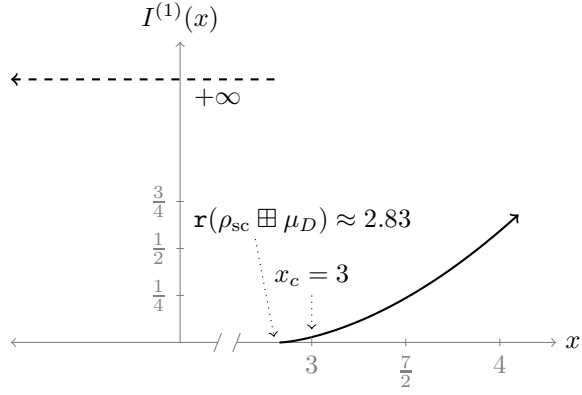
\begin{figure}[h!]
\begin{center}
\begin{tikzpicture}[scale=2.5]
    \draw [<-, gray] (2.3,1.6) -- (2.3,0);
    \draw [<-, gray] (1.4,0) -- (2.5,0);
    \draw [gray] (2.48,-0.05) -- (2.52,0.05);
    \draw [gray] (2.58,-0.05) -- (2.62,0.05);
    \draw [->, gray] (2.6,0) -- (4.3,0);
    \node [above] at (2.3,1.6) {$I^{(1)}(x)$};
    \node [right] at (4.3,0) {$x$};
    \draw [gray] (2.28,0.25) -- (2.32,0.25);
    \draw [gray] (2.28,0.5) -- (2.32,0.5);
    \draw [gray] (2.28,0.75) -- (2.32,0.75);
    \draw [gray] (3.5,0.02) -- (3.5,-0.02);
    \draw [gray] (4,0.02) -- (4,-0.02);
    \node [gray, left] at (2.28,0.25) {$\frac{1}{4}$};
    \node [gray, left] at (2.28,0.5) {$\frac{1}{2}$};
    \node [gray, left] at (2.28,0.75) {$\frac{3}{4}$};
    \node [gray, below] at (3,-0.02) {$3$};
    \node [gray, below] at (3.5,-0.02) {$\frac{7}{2}$};
    \node [gray, below] at (4,-0.02) {$4$};
    \draw [<-, dashed, thick] (1.4,1.4) -- (2.82843,1.4);
    \node [below] at (2.5,1.4) {$+\infty$};
    \draw [black, thick, domain = 2.82843:3] plot(\x, {\x*sqrt(\x*\x-8)/8+ln(sqrt(8)/(\x+sqrt(\x*\x-8)))});
    \draw [black, thick, domain = 3:4.1, ->] plot(\x, {(\x-2)*(\x-2)/4+(\x*sqrt(\x*\x-8)-\x*\x)/16+ln(2/(\x+sqrt(\x*\x-8)))/2+1/2});
    \draw [->, dotted] (2.7,0.55) -- (2.8,0.03);
    \node at (2.95,0.65) {$\mathtt{r}(\rho_{\text{sc}} \boxplus \mu_D) \approx 2.83$};
    \draw [gray] (3,0.02) -- (3,-0.02);
    \draw [->, dotted] (3,0.25) -- (3,0.07);
    \node [above] at (3,0.25) {$x_c = 3$};
\end{tikzpicture}
\end{center}
\caption{Sketch of the rate function when $\beta = 1$ and $\mu_D = \rho_{\text{sc}}$.}
\label{fig:semicircle}
\end{figure}

\subsection{Second example ($x_c = \infty$).}\
\label{sub:second_example}
Now suppose
\[
    \mu_D = \frac{1}{2}(\delta_{-a}+\delta_{+a})
\]
for some parameter $a > 0$. Here $x_c(\mu_D) = G_{\mu_D}(a) = +\infty$, so all $x$ are subcritical; that is, we can estimate any probability $\P_N(\lambda_N \in A)$ under either the \ref{hyp:SSGC} Hypothesis or the \ref{hyp:Gaussian} Hypothesis. Our computations use the known result
\begin{equation}
\label{eqn:example_right_edge}
    \mathtt{r}(\rho_{\text{sc}} \boxplus \mu_D) = \frac{(4a^2-1+\sqrt{8a^2+1})^{3/2}}{2\sqrt{2}a(\sqrt{8a^2+1}-1)} =: \mathtt{r}(a).
\end{equation}
In the physics literature this dates back to \cite[Equations (55), (56)]{Zee1996}; it was established in the mathematical literature in \cite[Equations (3.5), (3.6)]{BleKui2004} (for $a > 1$), \cite[Section 1]{AptBleKui2005} (for $a < 1$), and \cite[Section 7]{BleKui2007} (for $a = 1$). The latter three papers establish that the measure $\rho_{\text{sc}} \boxplus \mu_D$ undergoes a phase transition at $a = 1$. When $a > 1$, the support of $\rho_{\text{sc}} \boxplus \mu_D$ consists of two intervals; when $a = 1$, these intervals meet at zero, where the density has cubic-root decay; and when $a < 1$ the support is a single interval, on the interior of which the density is strictly positive. (This set of three papers also establishes universality of correlation functions.) We emphasize that our results apply to all $a > 0$.

Using the equivalent \cite[Theorem 6]{GuiMai2005} formula
\[
    J(\nu,\theta,\ms{M}) = \theta R_\nu\left(\frac{2}{\beta}\theta\right) -\frac{\beta}{2} \int \log\left(1+\frac{2}{\beta}\theta R_\nu\left(\frac{2}{\beta}\theta\right)-\frac{2}{\beta}\theta y\right) \nu(\diff y),
\]
valid if $0 \leq \frac{2}{\beta} \theta \leq G_\nu(\ms{M})$, and the constrained equation \eqref{eqn:constrained_thetax} implicitly defining $\theta_x$, one can see that
\begin{align}
\label{eqn:rate_fn_example}
\begin{split}
    I^{(\beta)}(x) &= \frac{\theta_x^2}{\beta} + \frac{\beta}{2} \int \log\left(x-\frac{2}{\beta}\theta_x - y\right)\mu_D(\diff y) - \frac{\beta}{2} \int \log(x-y) (\rho_{\text{sc}} \boxplus \mu_D)(\diff y) \\
    &= \frac{\beta}{4}\left(\frac{2}{\beta}\theta_x\right)^2 + \frac{\beta}{4}  \log\left[ \left(x-\frac{2}{\beta}\theta_x\right)^2-a^2 \right] - \frac{\beta}{2} \int \log(x-y) (\rho_{\text{sc}} \boxplus \mu_D)(\diff y)
\end{split}
\end{align}
for $x \geq \mathtt{r}(\rho_{\text{sc}} \boxplus \mu_D)$. We invert $K_{\rho_{\text{sc}} \boxplus \mu_D}(y) = \frac{\sqrt{1+4a^2y^2}+2y^2+1}{2y}$ to obtain $G_{\rho_{\text{sc}} \boxplus \mu_D}(y)$ for $y > \mathtt{r}(a)$, choosing branches according to the requirement that $G_{\rho_{\text{sc}} \boxplus \mu_D}(y)$ be decreasing on $(\mathtt{r}(\rho_{\text{sc}} \boxplus \mu_D),\infty)$; this yields
\begin{align*}
    G_{\rho_{\text{sc}} \boxplus \mu_D}(y) = \frac{2}{3}\left[y- \sqrt{-3+3a^2+y^2}\sin\left(\frac{\pi}{3}-\frac{1}{3}\arctan\left(\frac{9y+18a^2y-2y^3}{\sqrt{-4(3-3a^2-y^2)^3-(9y+18a^2y-2y^3)^2}}\right)\right)\right] \\
\end{align*}
if $y > \mathtt{r}(a)$. In the limit $y \downarrow \mathtt{r}(a)$ we obtain
\[
    G_{\rho_{\text{sc}} \boxplus \mu_D}(\mathtt{r}(a)) = \frac{(-1+4a^2+\sqrt{1+8a^2})^{3/2}-\sqrt{(1+8a^2)(-1-4a^2+8a^4+\sqrt{1+8a^2})}}{3\sqrt{2}a(-1+\sqrt{1+8a^2})} =: \mathtt{c}(a).
\]
This gives us the bounds on the constrained problem \eqref{eqn:constrained_thetax}; since $K_{\mu_D}(y) = \frac{\sqrt{1+4a^2y^2}+1}{2y}$, this has the solution
\[
    \frac{2}{\beta}\theta_x = \frac{2}{3}\left[x - \sqrt{-3+3a^2+x^2}\sin\left(\frac{1}{3}\arctan\left(\frac{9x+18a^2x-2x^3}{\sqrt{-4(3-3a^2-x^2)^3-(9x+18a^2x-2x^3)^2}}\right)\right)\right] \quad \text{if } x > \mathtt{r}(a).
\]

On the other hand, since $\rho_{\text{sc}} \boxplus \mu_D$ decays at most like a cube root near its edges \cite[Corollary 5]{Bia1997}, we can differentiate under the integral sign to obtain
\[
    \int \log(x-y)(\rho_{\text{sc}} \boxplus \mu_D)(\diff y) = \int_{\mathtt{r}(\rho_{\text{sc}} \boxplus \mu_D)}^x G_{\rho_{\text{sc}} \boxplus \mu_D}(t) \diff t + \int \log(\mathtt{r}(\rho_{\text{sc}} \boxplus \mu_D)-y) (\rho_{\text{sc}} \boxplus \mu_D)(\diff y).
\]
We compute the second term on the right-hand side by setting $x = \mathtt{r}(\rho_{\text{sc}} \boxplus \mu_D)$ in \eqref{eqn:rate_fn_example}, since then $I^{(\beta)}(x) = 0$ and $\frac{2}{\beta}\theta_x = G_{\rho_{\text{sc}} \boxplus \mu_D}(\mathtt{r}(\rho_{\text{sc}} \boxplus \mu_D)) = \mathtt{c}(a)$; this yields
\[
    \int \log(\mathtt{r}(\rho_{\text{sc}} \boxplus \mu_D)-y) (\rho_{\text{sc}} \boxplus \mu_D)(\diff y) = \frac{1}{2}(\mathtt{c}(a)^2+\log((\mathtt{r}(a)-\mathtt{c}(a))^2-a^2)).
\]
Thus, if $x > \mathtt{r}(\rho_{\text{sc}} \boxplus \mu_D)$, 
\begin{align*}
    I^{(\beta)}(x)& \\
    = \frac{\beta}{4}\Bigg[&\left[ \frac{2}{3}\left[x - \sqrt{-3+3a^2+x^2}\sin\left(\frac{1}{3}\arctan\left(\frac{9x+18a^2x-2x^3}{\sqrt{-4(3-3a^2-x^2)^3-(9x+18a^2x-2x^3)^2}}\right)\right)\right]\right]^2  \\
    &+ \log\left[ \left( \frac{x}{3} + \left[ \frac{2\sqrt{-3+3a^2+x^2}}{3}\sin\left(\frac{1}{3}\arctan\left(\frac{9x+18a^2x-2x^3}{\sqrt{-4(3-3a^2-x^2)^3-(9x+18a^2x-2x^3)^2}}\right)\right)\right]\right)^2 - a^2 \right] \\
    &- 2 \int_{\mathtt{r}(a)}^x \frac{2}{3}\left[t-\sqrt{-3+3a^2+t^2}\sin\left(\frac{\pi}{3}-\frac{1}{3}\arctan\left(\frac{9t+18a^2t-2t^3}{\sqrt{-4(3-3a^2-t^2)^3-(9t+18a^2t-2t^3)^2}}\right)\right)\right] \diff t \\
    &- (\mathtt{c}(a)^2+\log((\mathtt{r}(a)-\mathtt{c}(a))^2-a^2))\Bigg].
\end{align*}
Figure \ref{fig:rademacher} plots this function at the critical parameter $a = 1$ (so that $\mathtt{r}(\rho_{\text{sc}} \boxplus \mu_D) = \frac{3\sqrt{3}}{2}$) when $\beta = 1$.

\begin{figure}[h!]
\begin{center}
\begin{tikzpicture}[scale=2.5]
    \draw [<-, gray] (2.1,1.6) -- (2.1,0);
    \draw [<-, gray] (1.4,0) -- (2.3,0);
    \draw [gray] (2.28,-0.05) -- (2.32,0.05);
    \draw [gray] (2.38,-0.05) -- (2.42,0.05);
    \draw [->, gray] (2.4,0) -- (4,0);
    \node [above] at (2.1,1.6) {$I^{(1)}(x)$};
    \node [right] at (4,0) {$x$};
    \draw [<-, dashed, thick] (1.4,1.4) -- (2.59808,1.4);
    \node [below] at (2.4,1.4) {$+\infty$};
    \draw [->,dotted] (2.7,-0.4) -- (2.6,-0.04);
    \node at (2.7,-0.5) {$\mathtt{r}(\rho_{\text{sc}} \boxplus \mu_D) \approx 2.60$};
    \draw [gray] (2.5,0.02) -- (2.5,-0.02);
    \draw [gray] (3,0.02) -- (3,-0.02);
    \draw [gray] (3.5,0.02) -- (3.5,-0.02);
    \draw [gray] (2.08,0.25) -- (2.12,0.25);
    \draw [gray] (2.08,0.5) -- (2.12,0.5);
    \draw [gray] (2.08,0.75) -- (2.12,0.75);
    \node [gray, left] at (2.1,0.25) {$\frac{1}{4}$};
    \node [gray, left] at (2.1,0.5) {$\frac{1}{2}$};
    \node [gray, left] at (2.1,0.75) {$\frac{3}{4}$};
    \node [gray, below] at (2.5,-0.02) {$\frac{5}{2}$};
    \node [gray, below] at (3,-0.02) {$3$};
    \node [gray, below] at (3.5,-0.02) {$\frac{7}{2}$};
    \draw [thick, ->] (2.59808, 0) -- (2.61, 0.000762498) -- (2.62, 0.00190294) -- (2.63, 0.00334702) -- (2.64, 0.00504208) -- 
    (2.65, 0.00695665) -- (2.66, 0.00906917) -- (2.67, 0.0113637) -- (2.68, 0.0138279) -- (2.69, 0.0164517) --
    (2.70, 0.0192268) -- (2.71, 0.0221465) -- (2.72, 0.0252045) -- (2.73, 0.0283958) -- (2.74, 0.0317158) --
    (2.75, 0.0351604) -- (2.76, 0.0387259) -- (2.77, 0.0424092) -- (2.78, 0.0462072) -- (2.79, 0.0501172) --
    (2.80, 0.0541368) -- (2.81, 0.0582636) -- (2.82, 0.0624956) -- (2.83, 0.0668307) -- (2.84, 0.0712673) --
    (2.85, 0.0758034) -- (2.86, 0.0804377) -- (2.87, 0.0851685) -- (2.88, 0.0899945) -- (2.89, 0.0949144) --
    (2.90, 0.0999269) -- (2.91, 0.105031) -- (2.92, 0.110225) -- (2.93, 0.115509) -- (2.94, 0.12088) --
    (2.95, 0.126339) -- (2.96, 0.131885) -- (2.97, 0.137516) -- (2.98, 0.143231) -- (2.99, 0.149031) --
    (3.00, 0.154914) -- (3.01, 0.160879) -- (3.02, 0.166925) -- (3.03, 0.173053) -- (3.04, 0.179261) --
    (3.05, 0.185549) -- (3.06, 0.191916) -- (3.07, 0.198362) -- (3.08, 0.204886) -- (3.09, 0.211487) --
    (3.10, 0.218165) -- (3.11, 0.224919) -- (3.12, 0.23175) -- (3.13, 0.238656) -- (3.14, 0.245637) --
    (3.15, 0.252693) -- (3.16, 0.259823) -- (3.17, 0.267027) -- (3.18, 0.274304) -- (3.19, 0.281655) --
    (3.20, 0.289078) -- (3.21, 0.296573) -- (3.22, 0.30414) -- (3.23, 0.311779) -- (3.24, 0.319489) --
    (3.25, 0.32727) -- (3.26, 0.335121) -- (3.27, 0.343043) -- (3.28, 0.351035) -- (3.29, 0.359097) --
    (3.30, 0.367228) -- (3.31, 0.375428) -- (3.32, 0.383697) -- (3.33, 0.392034) -- (3.34, 0.40044) --
    (3.35, 0.408914) -- (3.36, 0.417456) -- (3.37, 0.426066) -- (3.38, 0.434743) -- (3.39, 0.443487) --
    (3.40, 0.452298) -- (3.41, 0.461176) -- (3.42, 0.47012) -- (3.43, 0.479131) -- (3.44, 0.488207) --
    (3.45, 0.49735) -- (3.46, 0.506558) -- (3.47, 0.515832) -- (3.48, 0.525171) -- (3.49, 0.534575) --
    (3.50, 0.544044) -- (3.51, 0.553578) -- (3.52, 0.563177) -- (3.53, 0.57284) -- (3.54, 0.582567) --
    (3.55, 0.592359) -- (3.56, 0.602214) -- (3.57, 0.612133) -- (3.58, 0.622116) -- (3.59, 0.632162) --
    (3.60, 0.642272) -- (3.61, 0.652445) -- (3.62, 0.662681) -- (3.63, 0.672979) -- (3.64, 0.683341) --
    (3.65, 0.693765) -- (3.66, 0.704252) -- (3.67, 0.714801) -- (3.68, 0.725412) -- (3.69, 0.736086) --
    (3.70, 0.746821);
\end{tikzpicture}
\end{center}
\caption{Sketch of the rate function when $\beta = 1$ and $\mu_D = \frac{1}{2}(\delta_{1}+\delta_{-1})$.}
\label{fig:rademacher}
\end{figure}
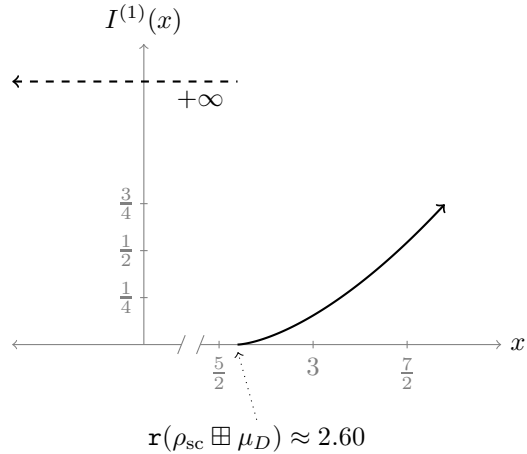

\begin{quest}
Does the mechanism driving the deviations $\{\lambda_N(X_N) \approx x\}$ change as $x$ passes the critical threshold $x_c$? Specifically, can one formalize and prove the notion that, with large probability, while the eigenvector corresponding to $\lambda_N$ is delocalized under the above event for subcritical $x$ values, it localizes for supercritical $x$ values?
\end{quest}


\section{Proof overview}
\label{sec:overview}

\subsection{Spherical integrals.}\
\label{sub:sph_int}
Given a self-adjoint $N \times N$ matrix $X$ and $\theta \geq 0$, consider
\begin{align*}
	I_N(X,\theta) &= \E_e[e^{N\theta\ip{e,Xe}}], \\
	J_N(X,\theta) &= \frac{1}{N} \log I_N(X,\theta).
\end{align*}
We recall that $\E_e$ only averages over the unit sphere, so if $X$ is random then $I_N(X,\theta)$ and $J_N(X,\theta)$ are random variables.

If $\{X_N\}$ is such that $\hat{\mu}_{X_N}$ has a weak limit $\nu$, then we might hope that $J_N(X_N,\theta)$ also has a limit depending on $\nu$ and $\theta$. This is so; but the limit also depends on $\lambda_N(X_N)$ if $\theta$ is sufficiently large. This should not be surprising, since the integrand $e^{N\theta\ip{e,Xe}}$ is maximized near the eigenvector corresponding to $\lambda_N(X)$, especially for larger $\theta$ values. Indeed, we have the following result.

\begin{prop}
\cite[Theorem 6]{GuiMai2005}
\label{prop:spherical_limit}
Suppose that the sequence $(A_N)_{N=1}^\infty$ of self-adjoint matrices is such that $\hat{\mu}_{A_N} \to \nu$ weakly for some compactly-supported measure $\nu$, that $\lambda_1(A_N)$ has a finite limit, and that $\lambda_N(A_N) \to \mathscr{M}$ for some real number $\mathscr{M}$. (Notice that we are not assuming that $\mathscr{M}$ is the right edge of $\nu$, but of course we must have $\mathscr{M} \geq \mathtt{r}(\nu)$.) If $\theta \geq 0$, then
\[
	\lim_{N \to \infty} J_N(A_N,\theta) = J(\nu,\theta,\mathscr{M}),
\]
where $J(\nu,\theta,\mathscr{M})$ is as in \eqref{eqn:limitJ}.
\end{prop}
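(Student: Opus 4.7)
The plan is to represent the spherical integral as a Gaussian integral in disguise and extract the limit via a Laplace-type method, following the strategy of \cite{GuiMai2005}. By rotational invariance of the uniform probability measure on $\mathbb{S}^{N-1}$, I may diagonalize $A_N = U\diag(\lambda_1,\ldots,\lambda_N)U^\ast$ and reduce to the case where $A_N$ is diagonal. Writing $e = g/\|g\|_2$ with $g$ a standard Gaussian vector in the appropriate symmetry class, and using that $\|g\|_2$ and $e$ are then independent, the central identity is
\begin{equation*}
    \det\Bigl(I_N - \tfrac{2\theta'}{\beta} A_N\Bigr)^{-\beta/2} = \E_g\bigl[e^{\theta' \ip{g,A_Ng}}\bigr] = \int_0^\infty I_N(A_N, \theta'r/N)\,p_R(r)\,\diff r,
\end{equation*}
valid for $\theta' < \beta/(2\lambda_N(A_N))$, where $p_R$ is the density of $R = \|g\|_2^2$ (a scaled chi-squared with $\beta N$ real degrees of freedom). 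The left-hand side is completely explicit, and after the change of variables $r = Nx$ the right-hand side becomes a Laplace-type integral.

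Convexity of $\theta \mapsto J_N(A_N,\theta)$ (as the log of a moment generating function) together with uniform-in-$N$ bounds yields pre-compactness of $\{J_N(A_N,\cdot)\}$ on compact subsets of $[0,\infty)$. For any subsequential limit $L$, Stirling applied to $p_R$ and Varadhan's lemma applied to the right-hand side produce the Legendre-transform relation
\begin{equation*}
    -\tfrac{\beta}{2}\int \log\!\Bigl(1 - \tfrac{2\theta' y}{\beta}\Bigr)\nu(\diff y) - \tfrac{\beta}{2} = \sup_{x>0}\Bigl\{L(\theta'x) + \tfrac{\beta}{2}\log x - \tfrac{\beta x}{2}\Bigr\}.
\end{equation*}
Because the right-hand side is strictly concave in $x$, the Legendre inversion is unique, so every subsequential limit $L$ coincides, establishing the convergence $J_N(A_N,\theta)\to L(\theta)$. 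The explicit formula $L = J(\nu,\theta,\mathscr{M})$ is then recovered by inverting the Legendre transform using $R_\nu(s) = K_\nu(s) - 1/s$ and $K_\nu = G_\nu^{-1}$.

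The two-regime structure in \eqref{eqn:limitJ} arises from a phase transition of the saddle point $x_\ast$ in the Laplace problem. For $2\theta/\beta < G_\nu(\mathscr{M})$ the saddle lies in the interior of the allowable region, the integral is controlled by the bulk of $\hat\mu_{A_N}\to\nu$, and solving the saddle equation $L'(\theta x_\ast) = \beta/(2x_\ast) - \beta/(2\theta)$ together with $L'(\theta) = R_\nu(2\theta/\beta)$ reproduces the $R$-transform formula. For $2\theta/\beta \geq G_\nu(\mathscr{M})$ the saddle collides with the upper edge $\lambda_N(A_N)\to\mathscr{M}$; the Gaussian integral localizes $e$ near the top eigenvector of $A_N$, producing the additional linear term $\theta\mathscr{M}$ and the logarithmic potential $-\tfrac{\beta}{2}\int\log(\mathscr{M}-y)\nu(\diff y)$. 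The matching of derivatives at the transition is a simple verification using $R_\nu(G_\nu(\mathscr{M})) = \mathscr{M} - 1/G_\nu(\mathscr{M})$.

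The main obstacle is handling the case $\mathscr{M} > \mathtt{r}(\nu)$, where $A_N$ has a macroscopic outlier converging to $\mathscr{M}$. Then the factor $(1 - 2\theta'\lambda_N(A_N)/\beta)^{-\beta/2}$ on the left-hand side must be split off from the bulk log-determinant before passing to the limit, since otherwise the bulk asymptotic $-\tfrac{\beta}{2}\int\log(1-2\theta' y/\beta)\nu(\diff y)$ fails to describe the full left-hand side near $\theta' = \beta/(2\mathscr{M})$. This careful bookkeeping of the single-eigenvalue factor is what ultimately produces the transition at $2\theta/\beta = G_\nu(\mathscr{M})$ and the explicit linear-in-$\theta$ term in the second branch of $J(\nu,\theta,\mathscr{M})$.
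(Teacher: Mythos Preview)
The paper does not give its own proof of this proposition; it is imported verbatim as \cite[Theorem 6]{GuiMai2005} and used as a black box throughout. So there is no ``paper's proof'' to compare your sketch against.

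That said, your outline is a faithful summary of the original Guionnet--Ma\"ida argument: the Gaussian-representation identity $\det(I_N - \tfrac{2\theta'}{\beta}A_N)^{-\beta/2} = \int I_N(A_N,\theta' r/N)\,p_R(r)\,\diff r$, compactness via convexity, Varadhan/Laplace to extract a Legendre-type relation, and the phase transition at $\tfrac{2}{\beta}\theta = G_\nu(\mathscr{M})$ coming from the saddle hitting the top eigenvalue. One caution: your description of the Legendre inversion is slightly glib. In the original proof, uniqueness of the limit $L$ is not obtained purely from ``strict concavity in $x$'' of the variational problem; rather, one differentiates the Legendre identity in $\theta'$ (using convexity of $L$ and hence a.e.\ differentiability), identifies the optimizer $x_\ast(\theta')$, and then integrates back to recover $L$ explicitly on each branch. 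The outlier bookkeeping you mention is indeed the key point for the second branch. As a sketch this is fine, but if you were to write it out you would need to be more careful about justifying Varadhan's lemma (the integrand is unbounded as $\theta' \uparrow \beta/(2\lambda_N(A_N))$) and about the inversion step.
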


\subsection{Tilted measures and weak large deviations.}\
Our general strategy will be to show a weak large deviation principle, as well as exponential tightness. In the proof of the weak-large-deviations lower bound for our measure of interest, we will actually need a weak-large-deviations \emph{upper bound} for the following family of measures. 

\begin{defn}
Given $\theta \geq 0$, we consider the ``tilted'' measure $\P_N^{\theta}$ on $N \times N$ matrices (symmetric if $\beta = 1$, or Hermitian if $\beta = 2$) whose density with respect to the law $\P_N$ of $X_N$ is given by
\[
    \frac{\diff \P_N^{\theta}}{\diff \P_N}(X) = \frac{I_N(X,\theta)}{\E_{X_N}(I_N(X_N,\theta))}.
\]
Notice from the definition of $I_N$ that $\P_N^0 = \P_N$. 
\end{defn}

We will need the following asymptotics of the free energy for this measure, with proof in Section \ref{sec:free_energy}.

\begin{prop}
\label{prop:free_energy}
Given the compactly supported measure $\mu_D$, define the threshold
\[
    \theta_c = \theta_c(\beta, \mu_D) = \begin{cases} \frac{\beta}{2}G_{\mu_D}(\mathtt{r}(\mu_D)) & \text{if } G_{\mu_D}(\mathtt{r}(\mu_D)) < +\infty, \\ +\infty & \text{otherwise.} \end{cases}
\]
Under the \ref{hyp:Gaussian} Hypothesis, choose any $\theta \geq 0$; or, under the \ref{hyp:SSGC} Hypothesis, choose any $0 \leq \theta < \theta_c$. Then
\[
    \lim_{N \to \infty} \frac{1}{N} \log \E_{X_N}[I_N(X_N,\theta)] = \frac{\theta^2}{\beta} + J(\mu_D,\theta,\mathtt{r}(\mu_D)).
\]
\end{prop}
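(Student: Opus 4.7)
The starting point is Fubini, since the Haar measure on the sphere is independent of $W_N$:
\begin{equation*}
    \E_{X_N}[I_N(X_N,\theta)] = \E_e\!\left[\E_{W_N}\!\left[e^{\sqrt{N}\theta\ip{e,W_Ne}}\right] \cdot e^{N\theta\ip{e,D_Ne}}\right].
\end{equation*}
The plan is to show that the inner expectation in $W_N$ is well-approximated by $e^{N\theta^2/\beta}$, uniformly in $e$ (with the correct sign of the approximation for upper and lower bounds), so that the outer integral reduces to a classical spherical integral $I_N(D_N,\theta)$ whose asymptotics are supplied by Proposition \ref{prop:spherical_limit}.

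Under the \ref{hyp:Gaussian} Hypothesis, a direct Gaussian computation (decomposing $\ip{e,W_Ne}$ into independent entries and summing $e_i^2$ terms) gives the exact identity $\E_{W_N}[e^{\sqrt{N}\theta\ip{e,W_Ne}}] = e^{N\theta^2/\beta}$ for every unit $e$. Hence the claim collapses to
\begin{equation*}
    \frac{1}{N}\log\E_{X_N}[I_N(X_N,\theta)] = \frac{\theta^2}{\beta} + \frac{1}{N}\log I_N(D_N,\theta),
\end{equation*}
and Proposition \ref{prop:spherical_limit} applied to $A_N = D_N$ (which satisfies the hypotheses by Assumption \ref{assns:basic}, with $\mathscr{M} = \mathtt{r}(\mu_D)$) finishes the Gaussian case, for all $\theta \geq 0$.

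Under the \ref{hyp:SSGC} Hypothesis, I would handle the upper and lower bounds separately. For the upper bound, the sharp sub-Gaussian inequality \eqref{eqn:ssg} together with independence of entries yields, for every unit $e$,
\begin{equation*}
    \E_{W_N}\!\left[e^{\sqrt{N}\theta\ip{e,W_Ne}}\right] \leq \exp\!\left(\frac{N\theta^2}{\beta}\sum_i e_i^4 + \frac{2N\theta^2}{\beta}\sum_{i<j} e_i^2 e_j^2\right) = \exp\!\left(\frac{N\theta^2}{\beta}\|e\|_2^4\right) = e^{N\theta^2/\beta},
\end{equation*}
(with the obvious analog in the complex case). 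Plugging this into the Fubini identity and applying Proposition \ref{prop:spherical_limit} to $I_N(D_N,\theta)$ gives the matching $\limsup$ bound, with no constraint on $\theta$.

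For the lower bound, which is the main obstacle, I would restrict the outer integration to the set $\mc{D}_N$ of appropriately \emph{delocalized} unit vectors (say $\|e\|_\infty \leq N^{-\alpha}$ for some small $\alpha > 0$). On $\mc{D}_N$, one has the pointwise asymptotic
\begin{equation*}
    \frac{1}{N}\log \E_{W_N}\!\left[e^{\sqrt{N}\theta\ip{e,W_Ne}}\right] \to \frac{\theta^2}{\beta}
\end{equation*}
uniformly in $e \in \mc{D}_N$, exactly as in \cite[Lemma 3.2]{GuiHui2018} (which uses the matching first two moments together with a Taylor expansion controlled by the sharp sub-Gaussian bound and the concentration hypothesis to handle higher-order cumulants). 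Combining this with the delocalization lemma (Lemma \ref{lem:delocalized_sph_int}), which asserts that restricting the spherical integral $\E_e[e^{N\theta\ip{e,D_Ne}}]$ to $\mc{D}_N$ does not change its exponential rate provided $\theta < \theta_c$, yields
\begin{equation*}
    \liminf_{N\to\infty}\frac{1}{N}\log\E_{X_N}[I_N(X_N,\theta)] \geq \frac{\theta^2}{\beta} + \lim_{N\to\infty}\frac{1}{N}\log I_N(D_N,\theta) = \frac{\theta^2}{\beta} + J(\mu_D,\theta,\mathtt{r}(\mu_D))
\end{equation*}
by Proposition \ref{prop:spherical_limit}. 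This is precisely where the restriction $\theta < \theta_c$ enters: without it, the mass of the spherical integral concentrates on localized vectors, and the sub-Gaussian Laplace transform is no longer asymptotically tight in that regime. Matching the two bounds completes the proof.
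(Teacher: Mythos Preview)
Your proposal is correct and matches the paper's approach exactly: Fubini, then the Gaussian case by direct computation, the SSGC upper bound via the sharp sub-Gaussian Laplace bound applied entrywise, and the SSGC lower bound by restricting to delocalized vectors $\|e\|_\infty \leq N^{-\alpha}$ and invoking Lemma~\ref{lem:delocalized_sph_int} (which is precisely where $\theta<\theta_c$ enters). One minor correction: the pointwise lower bound on delocalized vectors (the paper's Lemma~\ref{lem:laplace_lb_near_zero}, essentially a Taylor expansion of $T_{\mu^N_{i,j}}$ near zero) uses only the moment-matching and the sub-Gaussian tail bound, not the concentration half of the \ref{hyp:SSGC} Hypothesis---that is reserved for Proposition~\ref{prop:negligible_all_theta}.
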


We split up the weak-large-deviations upper and lower bounds as follows:

\begin{thm}
\label{thm:wkldpub}
First, let $x < \mathtt{r}(\rho_{\text{sc}} \boxplus \mu_D)$. Under either Hypothesis, choose any $\theta \geq 0$. Then
\[
    \lim_{\delta \to 0} \limsup_{N \to \infty} \frac{1}{N} \log \P_N^{\theta}(\abs{\lambda_N(X_N) - x} \leq \delta) = -\infty.
\]
Second, let $x \geq \mathtt{r}(\rho_{\text{sc}} \boxplus \mu_D)$. Under the \ref{hyp:Gaussian} Hypothesis, choose any $\theta \geq 0$; or, under the \ref{hyp:SSGC} Hypothesis, choose any $0 \leq \theta < \theta_c$. Then 
\[
    \limsup_{\delta \to 0} \limsup_{N \to \infty} \frac{1}{N} \log \P_N^\theta\left(\abs{\lambda_N(X_N) - x} \leq \delta\right) \leq -(I^{(\beta)}(x)-I^{(\beta)}(x,\theta)).
\]
Notice that $I^{(\beta)}(x,0) = 0$ for all measures $\mu_D$ and all $x \geq \mathtt{r}(\rho_{\text{sc}} \boxplus \mu_D)$. Thus when $\theta = 0$ we recover the weak large deviation upper bound for the measure of primary interest, under either Hypothesis.
\end{thm}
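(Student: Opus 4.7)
For the first assertion ($x < \mathtt{r}(\rho_{\text{sc}} \boxplus \mu_D)$), the plan is to show that the event is already super-exponentially improbable under $\P_N$ itself and that the tilt density inflates probabilities by at most a factor $e^{O(N)}$. If $\delta < \tfrac{1}{2}(\mathtt{r}(\rho_{\text{sc}} \boxplus \mu_D) - x)$, then $\{|\lambda_N(X_N) - x| \leq \delta\}$ forces $\lambda_N(X_N)$ a fixed distance below $\mathtt{r}(\rho_{\text{sc}} \boxplus \mu_D)$. Since $\rho_{\text{sc}} \boxplus \mu_D$ has strictly positive mass in every neighborhood of its right edge, testing against a suitable bump function forces $d(\hat{\mu}_{X_N}, \rho_{\text{sc}} \boxplus \mu_D)$ above a fixed positive constant on this event, which by \eqref{eqn:introduce_concentration} occurs with $\P_N$-probability $e^{-\omega(N)}$. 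Meanwhile $I_N(X_N,\theta) \leq e^{N\theta \lambda_N(X_N)} = e^{O(N)}$ on the event, and Jensen's inequality yields $\E_{X_N}[I_N(X_N,\theta)] \geq e^{\theta \tr(D_N) + o(N)} \geq e^{-O(N)}$, so the tilt density is at most $e^{O(N)}$ and the super-exponential decay is preserved.

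For the second assertion I would proceed in two stages. Stage one would establish the case $\theta = 0$, namely $\tfrac{1}{N}\log \P_N(|\lambda_N - x| < \delta) \leq -I^{(\beta)}(x) + o(1)$. Fixing an auxiliary parameter $\theta' \geq 0$ (with $\theta' < \theta_c$ under \ref{hyp:SSGC}) and restricting to the concentration event $G_N = \{d(\hat{\mu}_{X_N}, \rho_{\text{sc}} \boxplus \mu_D) \leq N^{-\kappa}\}$, a uniform strengthening of Proposition \ref{prop:spherical_limit} provides the lower bound $I_N(X,\theta') \geq e^{N(J(\rho_{\text{sc}} \boxplus \mu_D, \theta', x) - o_\delta(1)) + o(N)}$ on $G_N \cap \{|\lambda_N - x| < \delta\}$; Chebyshev combined with Proposition \ref{prop:free_energy} then gives
\[
\P_N(|\lambda_N - x| < \delta, G_N) \leq \frac{\E_{X_N}[I_N(X_N,\theta')]}{\inf I_N(X,\theta')} \leq e^{-N I^{(\beta)}(x,\theta') + o(N)},
\]
and optimizing over $\theta'$ yields the desired $-I^{(\beta)}(x)$ (the $G_N^c$ contribution being absorbed by \eqref{eqn:introduce_concentration}). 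Stage two would extend to general $\theta$: writing
\[
\P_N^\theta(|\lambda_N - x| < \delta) = \frac{\E_{X_N}[\mathbf{1}_{|\lambda_N - x| < \delta} I_N(X_N, \theta)]}{\E_{X_N}[I_N(X_N, \theta)]},
\]
I would restrict the numerator to $G_N$ and apply the uniform \emph{upper} bound $I_N(X,\theta) \leq e^{N(J(\rho_{\text{sc}} \boxplus \mu_D, \theta, x) + o_\delta(1)) + o(N)}$ from the same uniform strengthening. Combining with the stage-one estimate and with Proposition \ref{prop:free_energy} for the denominator produces $\P_N^\theta(|\lambda_N - x| < \delta, G_N) \leq e^{-N(I^{(\beta)}(x) - I^{(\beta)}(x,\theta)) + o(N)}$. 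The contribution of $G_N^c$ is controlled using $I_N(X,\theta) \leq e^{N\theta \lambda_N(X)}$, exponential tightness of $\lambda_N(X_N)$ to be proved in Section \ref{sec:concentration}, and the super-exponential bound on $\P_N(G_N^c)$.

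The principal obstacle I anticipate is the uniform version of Proposition \ref{prop:spherical_limit}: one needs $\tfrac{1}{N} \log I_N(X,\theta) \to J(\rho_{\text{sc}} \boxplus \mu_D, \theta, x)$ uniformly over $X$ with $d(\hat{\mu}_X, \rho_{\text{sc}} \boxplus \mu_D) \leq N^{-\kappa}$ and $|\lambda_N(X) - x| < \delta$, with error $o_\delta(1) + o_N(1)$. This requires both continuity of $J(\nu, \theta, \mathscr{M})$ in $\nu$ under weak convergence and in $\mathscr{M}$, together with robustness of the proof of Proposition \ref{prop:spherical_limit} to the weak spectral perturbations allowed on $G_N$. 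One must also verify that $\lambda_1(X_N)$ remains bounded below on the relevant event---as the hypotheses of Proposition \ref{prop:spherical_limit} require---which will follow from the exponential tightness arguments of Section \ref{sec:concentration}.
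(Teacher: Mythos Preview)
Your approach is essentially the same as the paper's, just organized differently. The paper handles both the case $\theta=0$ and general $\theta$ in a single estimate (Lemma~\ref{lem:tilted_ub}): for any auxiliary $\theta'\ge 0$,
\[
\P_N^\theta(\mc{A}_{x,\delta}^M) \le \frac{\E_{X_N}[I_N(X_N,\theta')]}{\E_{X_N}[I_N(X_N,\theta)]}\left(\sup_{\mc{A}} I_N(X,\theta)\right)\left(\sup_{\mc{A}} \frac{1}{I_N(X,\theta')}\right),
\]
which is exactly your Stage~2 inequality with your Stage~1 Chebyshev bound folded in. Your ``principal obstacle'' (uniform convergence of $J_N$) is the paper's Corollary~\ref{cor:matrix_cty_of_spherical}, obtained from Ma\"ida's continuity estimate (Proposition~\ref{prop:sph_int_cts}) by comparing to a fixed discretized matrix; the $\lambda_1$ control you worry about is handled by including $\|X\|\le M$ in the definition of $\mc{A}_{x,\delta}^M$ and using exponential tightness for the complement. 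For the first assertion the paper also works directly under $\P_N^\theta$ via Proposition~\ref{prop:negligible_all_theta}, but the proof of that proposition (Lemma~\ref{lem:theta0_suffices}) is precisely your ``tilt density is at most $e^{O(N)}$'' argument.

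One small correction: in Stage~1 you restrict to $\theta'<\theta_c$ under the \ref{hyp:SSGC} Hypothesis because you invoke Proposition~\ref{prop:free_energy}. But the Chebyshev step only needs the \emph{upper} bound on $\E_{X_N}[I_N(X_N,\theta')]$, which the paper isolates as Lemma~\ref{lem:free_energy_ub} and which holds for every $\theta'\ge 0$. With your restriction you would only obtain $\sup_{\theta'<\theta_c} I^{(\beta)}(x,\theta')$, which can be strictly smaller than $I^{(\beta)}(x)$ when $x\ge x_c$; since the theorem is stated for all $x\ge\mathtt{r}(\rho_{\text{sc}}\boxplus\mu_D)$, you should optimize over all $\theta'\ge 0$. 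The lower bound on the free energy (which does require $\theta<\theta_c$ under \ref{hyp:SSGC}) is only needed for the denominator in Stage~2, matching the hypothesis on~$\theta$.
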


\begin{thm}
\label{thm:wkldplb}
Under the \ref{hyp:Gaussian} Hypothesis, choose any $x \in \R$; or, under the \ref{hyp:SSGC} Hypothesis, choose any $x < x_c$. Then 
\[
    \liminf_{\delta \to 0} \liminf_{N \to \infty} \frac{1}{N} \log \P_N(\abs{\lambda_N(X_N) - x} < \delta) \geq -I^{(\beta)}(x).
\]
\end{thm}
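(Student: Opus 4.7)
The plan is to execute the classical exponential-tilting lower-bound argument, with $\P_N^\theta$ as the exponential family and the optimizer $\theta_x$ of $\theta \mapsto I^{(\beta)}(x,\theta)$ as the tilt. The case $x < \mathtt{r}(\rho_{\text{sc}} \boxplus \mu_D)$ is vacuous since $I^{(\beta)}(x) = +\infty$, and the case $x = \mathtt{r}(\rho_{\text{sc}} \boxplus \mu_D)$ reduces to the assertion that $\P_N(|\lambda_N - \mathtt{r}(\rho_{\text{sc}} \boxplus \mu_D)| < \delta)$ does not decay exponentially, which follows from exponential tightness together with Theorem~\ref{thm:wkldpub} at $\theta = 0$ and the fact (to be established in Proposition~\ref{prop:study_of_rate_fn}) that $I^{(\beta)}$ has a unique zero. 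I therefore focus on $x > \mathtt{r}(\rho_{\text{sc}} \boxplus \mu_D)$; here Proposition~\ref{prop:study_of_rate_fn} supplies a \emph{unique} optimizer $\theta_x$, and in the \ref{hyp:SSGC} setting the assumption $x < x_c$ guarantees $\theta_x < \theta_c$, so that Proposition~\ref{prop:free_energy} is applicable at this tilt.

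Writing $Z_N(\theta) := \E_{X_N}[I_N(X_N, \theta)]$ and starting from the identity
\[
\P_N(|\lambda_N(X_N) - x| < \delta) = \E_{\P_N^{\theta_x}}\!\left[\frac{Z_N(\theta_x)}{I_N(X_N, \theta_x)} \mathbf{1}_{|\lambda_N(X_N) - x| < \delta}\right],
\]
I would restrict the integral to the good event
\[
E_N := \{d(\hat{\mu}_{X_N}, \rho_{\text{sc}} \boxplus \mu_D) < \eta\} \cap \{|\lambda_1(X_N)| \leq L\} \cap \{|\lambda_N(X_N) - x| < \delta\}.
\]
On $E_N$, Proposition~\ref{prop:free_energy} gives $Z_N(\theta_x) = \exp(N[\theta_x^2/\beta + J(\mu_D, \theta_x, \mathtt{r}(\mu_D)) + o(1)])$, and Proposition~\ref{prop:spherical_limit} combined with the joint continuity of $J(\nu, \theta_x, \ms{M})$ in $(\nu, \ms{M})$ gives $I_N(X_N, \theta_x) \leq \exp(N[J(\rho_{\text{sc}} \boxplus \mu_D, \theta_x, x) + o(1)])$, with error tending to zero as $N \to \infty$ and then $\eta, \delta \to 0$. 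Making the convergence in Proposition~\ref{prop:spherical_limit} uniform across $E_N$ (rather than just along a single deterministic sequence) is the usual technical nuisance and can be handled by a subsequence-extraction argument. Taking the ratio yields the integrand lower bound $e^{-N(I^{(\beta)}(x, \theta_x) + o(1))} = e^{-N(I^{(\beta)}(x) + o(1))}$ on $E_N$, reducing the theorem to the statement $\P_N^{\theta_x}(E_N) \geq e^{-o(N)}$, which I would in fact strengthen to $\P_N^{\theta_x}(E_N) \to 1$.

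The main obstacle is establishing the concentration of $\lambda_N(X_N)$ at $x$ under $\P_N^{\theta_x}$. The other two components of $E_N$ are routine: the Radon-Nikodym derivative $I_N(X, \theta_x)/Z_N(\theta_x)$ is bounded above by $e^{N\theta_x \lambda_N(X) - N[\theta_x^2/\beta + J(\mu_D, \theta_x, \mathtt{r}(\mu_D))] + o(N)}$, which is subexponential once $\lambda_N(X)$ is bounded, so the concentration estimate \eqref{eqn:introduce_concentration} and exponential tightness both transfer from $\P_N$ to $\P_N^{\theta_x}$. For the concentration of $\lambda_N$, I would cover $[-L, L] \setminus (x - \delta, x + \delta)$ by finitely many balls around points $y_1, \ldots, y_k \neq x$ and apply Theorem~\ref{thm:wkldpub} (available in either hypothesis because $\theta_x < \theta_c$) around each, obtaining
\[
\P_N^{\theta_x}(|\lambda_N(X_N) - y_j| \leq \delta') \leq \exp\big(-N[I^{(\beta)}(y_j) - I^{(\beta)}(y_j, \theta_x) + o(1)]\big).
\]
Together with the exponential tightness on $\{|\lambda_N| > L\}$, this forces all the $\P_N^{\theta_x}$-mass into $(x - \delta, x + \delta)$, \emph{provided} that $I^{(\beta)}(y) - I^{(\beta)}(y, \theta_x) > 0$ for every $y \neq x$. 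The hard part is justifying this strict positivity, i.e., identifying $x$ as the unique $y$ at which $\theta \mapsto I^{(\beta)}(y, \theta)$ attains its supremum at the value $\theta_x$; this is where the detailed structural information packaged into Proposition~\ref{prop:study_of_rate_fn}, in particular the strict concavity of $I^{(\beta)}(y, \cdot)$ and the uniqueness of $\theta_x$ as the solution of the constraint \eqref{eqn:constrained_thetax}, is genuinely used.
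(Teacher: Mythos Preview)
Your proposal is correct and follows essentially the same architecture as the paper: tilt by the optimizer $\theta_x$, show via Theorem~\ref{thm:wkldpub} and Proposition~\ref{prop:study_of_rate_fn} that under $\P_N^{\theta_x}$ the largest eigenvalue concentrates at $x$ (this is the paper's Lemma~\ref{lem:deviations_likely}), and combine with Proposition~\ref{prop:free_energy} to extract the rate. The transfer of concentration and tightness from $\P_N$ to $\P_N^{\theta_x}$ is packaged in the paper as Proposition~\ref{prop:negligible_all_theta} via a Cauchy--Schwarz argument (Lemma~\ref{lem:theta0_suffices}), which is cleaner than bounding the Radon--Nikodym derivative pointwise on a norm-bounded set, but your version works too.

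The one genuine technical difference is in how you control $J_N(X_N,\theta_x)$ uniformly on the good event. The paper defines $\mc{A}_{x,\delta}^M$ with the \emph{polynomial} constraint $d(\hat\mu_{X_N},\rho_{\text{sc}}\boxplus\mu_D)<N^{-\kappa}$ precisely so that Ma\"ida's quantitative continuity estimate (Proposition~\ref{prop:sph_int_cts}, yielding Corollary~\ref{cor:matrix_cty_of_spherical}) applies directly; the matching concentration at rate $N^{-\kappa}$ under $\P_N^{\theta_x}$ is then supplied by Proposition~\ref{prop:negligible_all_theta}. Your softer route---fixed $\eta$, then a compactness/subsequence argument feeding into Proposition~\ref{prop:spherical_limit}---also works and avoids invoking the quantitative continuity lemma, at the cost of the usual diagonal-extraction bookkeeping. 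Either way the content is the same; the paper's choice simply makes the uniformity explicit and lets the same set $\mc{A}_{x,\delta}^M$ serve both the upper and lower bounds.
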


\subsection{Outline.}\
When estimating $\tfrac{1}{N} \log \P_N(\abs{\lambda_N(X_N) - x} \leq \delta)$ by tilting by spherical integrals, one wants to understand $J_N(X_N,\theta)$ on the event $\{\abs{\lambda_N(X_N) - x} \leq \delta\}$. To localize $J_N(X_N,\theta)$, one needs to control $\hat{\mu}_{X_N}$. Therefore one wants to find a set 
\[
    \mc{A}_{x,\delta}^M \subset \{\abs{\lambda_N(X_N) - x} \leq \delta\}
\]
of matrices with controlled empirical measures (which will turn out to depend on some $M \gg 1$) satisfying both of the following:
\begin{itemize}
\item On the one hand, $\mc{A}_{x,\delta}^M$ is a continuity set for spherical integrals, in the sense that we have a good enough understanding of $J_N(M,\theta)$ for $M \in \mc{A}_{x,\delta}^M$ to be able to estimate 
\[
    \frac{1}{N} \log \P_N(\mc{A}_{x,\delta}^M) \approx e^{-NI^{(\beta)}(x)}.
\]
\item On the other hand, $\mc{A}_{x,\delta}^M$ is not too much smaller than $\{\abs{\lambda_N(X_N) - x} \leq \delta\}$, in the sense that
\[
    \frac{1}{N} \log \P_N(\abs{\lambda_N(X_N) - x} \leq \delta) \approx \frac{1}{N} \log \P_N(\mc{A}_{x,\delta}^M).
\]
\end{itemize}

The next subsection first details the continuity result of \cite{Mai2007}, which helps us choose $\mc{A}_{x,\delta}^M$ while satisfying the first point, then states a proposition which we need to show that our choice satisfies the second point.

\subsection{Continuity of spherical integrals.}\

\begin{prop}
\cite[Proposition 2.1]{Mai2007}
\label{prop:sph_int_cts}
For any $\theta > 0$ and any $\kappa > 0$, there exists a function $g_{\kappa,\theta} : \R^+ \to \R^+$ going to zero at zero such that, for any $\delta > 0$ and $N$ large enough, if $B_N$ and $B'_N$ are sequences of matrices such that $d(\hat{\mu}_{B_N},\hat{\mu}_{B'_N}) < N^{-\kappa}$, $\abs{\lambda_N(B_N)-\lambda_N(B'_N)} < \delta$, $\sup_N\|B_N\| < \infty$, and $\sup_N \|B'_N\| < \infty$, then we have
\[
    \abs{J_N(B_N,\theta) - J_N(B'_N,\theta)} < g_{\kappa,\theta}(\delta).
\]
\end{prop}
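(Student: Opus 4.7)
The plan is to bootstrap this quantitative continuity statement from the qualitative convergence of Proposition \ref{prop:spherical_limit} via a compactness-and-contradiction argument, combined with the elementary Lipschitz continuity of the limit function $\mathscr{M} \mapsto J(\nu,\theta,\mathscr{M})$ (indeed, \eqref{eqn:limitJ} gives $\partial_\mathscr{M} J \in [0,\theta]$). By the rotation-invariance of the uniform measure on $S^{N-1}$, $I_N(B,\theta)$ depends on $B$ only through its spectrum, so one may as well assume $B_N$ and $B'_N$ diagonal, though this is not strictly needed.

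Suppose no such $g_{\kappa,\theta}$ exists. A diagonal extraction then produces $\epsilon_0 > 0$, $\delta_k \downarrow 0$, $N_k \uparrow \infty$, and matrices $B_{N_k}, B'_{N_k}$ satisfying the hypotheses with parameter $\delta_k$ yet with $\abs{J_{N_k}(B_{N_k},\theta) - J_{N_k}(B'_{N_k},\theta)} > \epsilon_0$ for every $k$. The uniform operator-norm bound makes $\{\hat{\mu}_{B_{N_k}}\}$ tight (supported in a common compact interval) and keeps $\lambda_1(B_{N_k})$, $\lambda_{N_k}(B_{N_k})$ in a bounded set, so along a further subsequence $\hat{\mu}_{B_{N_k}} \to \nu$ weakly, $\lambda_1(B_{N_k}) \to L$, and $\lambda_{N_k}(B_{N_k}) \to \mathscr{M}$. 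Since Dudley distance metrizes weak convergence and $d(\hat{\mu}_{B_{N_k}},\hat{\mu}_{B'_{N_k}}) < N_k^{-\kappa} \to 0$, the triangle inequality also gives $\hat{\mu}_{B'_{N_k}} \to \nu$ weakly; meanwhile $\lambda_{N_k}(B'_{N_k}) \to \mathscr{M}$ from $\abs{\lambda_{N_k}(B_{N_k}) - \lambda_{N_k}(B'_{N_k})} < \delta_k$, and $\lambda_1(B'_{N_k})$ converges along a further subsequence. Applying Proposition \ref{prop:spherical_limit} to both sequences yields $J_{N_k}(B_{N_k},\theta) \to J(\nu,\theta,\mathscr{M})$ and $J_{N_k}(B'_{N_k},\theta) \to J(\nu,\theta,\mathscr{M})$, so the difference vanishes, contradicting $>\epsilon_0$.

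To write $g_{\kappa,\theta}$ down explicitly, one can set
\[
    g(\delta) := \limsup_{N\to\infty}\, \sup \{\abs{J_N(B_N,\theta) - J_N(B'_N,\theta)} : B_N, B'_N \text{ satisfy the hypotheses with parameter } \delta\}.
\]
The argument above shows $g(\delta) \to 0$ as $\delta \to 0$, and any continuous majorant of $g$ vanishing at zero serves as $g_{\kappa,\theta}$.

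The hard part is genuinely hidden inside Proposition \ref{prop:spherical_limit}: the Laplace-type asymptotics for the rank-one Harish-Chandra/Itzykson/Zuber integral, with its phase transition at $G_\nu(\mathtt{r}(\nu))$, is the real work; my proof only converts that qualitative convergence into a modulus of continuity. A self-contained alternative would instead interpolate along sorted spectra (assuming diagonal matrices) via the identity
\[
    J_N(B,\theta) - J_N(B',\theta) = \theta \int_0^1 \sum_{i=1}^N (\lambda_i - \lambda'_i)\, \E^{B(t),\theta}[\abs{e_i}^2] \diff t,
\]
where $B(t) = tB + (1-t)B'$ and $\E^{B(t),\theta}$ denotes the tilt of the uniform spherical measure by $e^{N\theta\ip{e,B(t)e}}$. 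The $i=N$ contribution is bounded by $\theta\delta$ using $\E^{B(t),\theta}[\abs{e_N}^2] \le 1$, but converting the Wasserstein/Dudley bound on $\hat{\mu}_{B_N} - \hat{\mu}_{B'_N}$ into control of the bulk sum requires knowing how the tilted eigenvector mass spreads across the spectrum --- which is essentially the substance of Proposition \ref{prop:spherical_limit}.
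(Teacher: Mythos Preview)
The paper does not prove this proposition; it is quoted verbatim from \cite{Mai2007}, so there is no in-paper proof to compare your attempt against. Your compactness-and-contradiction route is a natural soft alternative to Ma\"ida's explicit estimates, but as written it has a gap you should be aware of.

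In the negation you obtain, for each $k$, a \emph{pair of sequences} $(B_N^{(k)})_N$, $(B'^{(k)}_N)_N$ with $\sup_N\|B_N^{(k)}\|\le M_k<\infty$ and bad behavior at some $N_k$. When you diagonally extract $B_{N_k}^{(k)}$, nothing forces $\sup_k M_k<\infty$; the sentence ``the uniform operator-norm bound makes $\{\hat\mu_{B_{N_k}}\}$ tight'' is therefore unjustified, and without tightness you cannot pass to a weak limit $\nu$ and invoke Proposition~\ref{prop:spherical_limit}. The same issue makes your explicit formula for $g(\delta)$ ill-posed: for a single $N$, the hypothesis ``$\sup_N\|B_N\|<\infty$'' is vacuous, so the inner supremum ranges over \emph{all} matrices.

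What your argument does establish cleanly is the variant in which $g$ is also allowed to depend on a fixed uniform bound $M$ on the operator norms, i.e.\ $g=g_{\kappa,\theta,M}$. That weaker statement is exactly what the paper uses: in Corollary~\ref{cor:matrix_cty_of_spherical} one works inside $\mc{A}_{x,\delta}^M$ with a fixed $M$, and the discretized comparison sequence $B'_N$ of Lemma~\ref{lem:discretize} also satisfies $\|B'_N\|\le M$. So your proof is adequate for every downstream application here, even if it does not quite recover the proposition as stated. If you want the $M$-independent version, you would need to supplement the soft argument with something quantitative (e.g.\ the translation identity $J_N(B+c\,\Id,\theta)=J_N(B,\theta)+c\theta$ together with control of both extreme eigenvalues), which is closer in spirit to Ma\"ida's original proof.
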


This suggests that we introduce the following deterministic sets of $N \times N$ symmetric matrices. Fix once and for all a $\kappa$ satisfying Proposition  \ref{prop:negligible_all_theta}, below, and write $g_\theta$ for $g_{\frac{\kappa}{2},\theta}$; then for any $x \in \R$, $\delta > 0$, and $M > 0$, let
\[
    \mc{A}_{x,\delta}^M = \{X : \abs{\lambda_N(X) - x} < \delta, d(\hat{\mu}_X,\rho_{\text{sc}} \boxplus \mu_D) < N^{-\kappa}, \text{ and }  \|X\| \leq M\}.
\]

In the next few results, we discretize the measure $\rho_{\text{sc}} \boxplus \mu_D$ so that we can apply Proposition \ref{prop:sph_int_cts} and control $J_N(X_N,\theta)$ uniformly for $X_N \in \mc{A}_{x,\delta}^M$. 

\begin{lem}
\label{lem:discretize}
Fix $x \geq \mathtt{r}(\rho_{\text{sc}} \boxplus \mu_D)$ and $M \geq \max(x,\abs{\mathtt{l}(\rho_{\text{sc}} \boxplus \mu_D)})$. Then there exists a sequence of deterministic matrices $B'_N$ with the following properties:
\begin{itemize}
\item $\lambda_N(B'_N) = x$, 
\item $\sup_{N \geq 1} \|B'_N\| \leq M$, and
\item $d(\hat{\mu}_{B'_N},\rho_{\text{sc}} \boxplus \mu_D) \leq N^{-\kappa}$ for $N$ sufficiently large.
\end{itemize}
\end{lem}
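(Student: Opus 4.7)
The plan is to exhibit $B'_N$ explicitly as the diagonal matrix whose first $N-1$ diagonal entries are $1/N$-quantiles of $\rho_{\text{sc}} \boxplus \mu_D$ and whose last diagonal entry is $x$ itself. Concretely, letting $F$ denote the CDF of $\rho_{\text{sc}} \boxplus \mu_D$, I would set $q_i = \inf\{t \in \R : F(t) \geq i/N\}$ for $i \in \llbracket 1, N-1 \rrbracket$ and take $B'_N = \diag(q_1, \ldots, q_{N-1}, x)$.

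The first two bullet points are essentially immediate from this construction. Each $q_i$ lies in $[\mathtt{l}(\rho_{\text{sc}} \boxplus \mu_D), \mathtt{r}(\rho_{\text{sc}} \boxplus \mu_D)]$ and $x \geq \mathtt{r}(\rho_{\text{sc}} \boxplus \mu_D) \geq q_i$, so the largest diagonal entry (and hence the largest eigenvalue) is exactly $x$. Since the operator norm of a diagonal matrix equals the supremum of the absolute values of its entries, and all entries lie in $[\mathtt{l}(\rho_{\text{sc}} \boxplus \mu_D), x]$, the hypothesis $M \geq \max(x, \abs{\mathtt{l}(\rho_{\text{sc}} \boxplus \mu_D)})$ gives $\|B'_N\| \leq M$ for every $N$.

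The Dudley-distance estimate is the main content. I would argue by triangle inequality through the ``complete'' quantile empirical measure $\tilde{\nu}_N = \frac{1}{N} \sum_{i=1}^N \delta_{q_i}$, where $q_N := \mathtt{r}(\rho_{\text{sc}} \boxplus \mu_D)$. Since $\hat{\mu}_{B'_N}$ and $\tilde{\nu}_N$ differ only in a single point mass of weight $1/N$ (moved from $q_N$ to $x$), any test function $f$ with $\|f\|_{L^\infty} + \text{Lip}(f) \leq 1$ satisfies $\abs{\int f \diff(\hat{\mu}_{B'_N} - \tilde{\nu}_N)} \leq 2/N$, simply using $\|f\|_{L^\infty} \leq 1$. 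For the second leg, one checks directly from the definition of $q_i$ that $F_{\tilde{\nu}_N}(t) = \lfloor N F(t) \rfloor / N$ satisfies $\|F_{\tilde{\nu}_N} - F\|_{L^\infty} \leq 1/N$; because both $\tilde{\nu}_N$ and $\rho_{\text{sc}} \boxplus \mu_D$ are supported in the fixed compact interval $[\mathtt{l}(\rho_{\text{sc}} \boxplus \mu_D), \mathtt{r}(\rho_{\text{sc}} \boxplus \mu_D)]$, integrating this uniform CDF bound gives $W_1(\tilde{\nu}_N, \rho_{\text{sc}} \boxplus \mu_D) \leq C/N$, and the Dudley distance is dominated by the $1$-Wasserstein distance on the class of $1$-Lipschitz functions. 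Combining, $d(\hat{\mu}_{B'_N}, \rho_{\text{sc}} \boxplus \mu_D) \leq (C+2)/N \leq N^{-\kappa}$ for $N$ large, using that the constant $\kappa$ fixed by Proposition \ref{prop:negligible_all_theta} may be taken strictly less than $1$ (indeed, arbitrarily small). There is no serious obstacle here; the construction is deterministic, and the only mild subtlety is verifying that surgically replacing one quantile by $x$ does not spoil the $N^{-\kappa}$ rate, which is transparent from the bounded part of the Dudley test class.
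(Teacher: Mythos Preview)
Your construction is exactly the paper's: take the diagonal matrix of $1/N$-quantiles of $\rho_{\text{sc}} \boxplus \mu_D$ and replace the top entry by $x$. The paper simply asserts that $d(\hat{\mu}_{B'_N},\rho_{\text{sc}}\boxplus\mu_D)=O(1/N)$ is ``easy to show'' from the bounded-Lipschitz definition of $d$, whereas you spell out the details via a triangle inequality through the full quantile measure, a $1/N$ bound on the CDF difference, and the domination of Dudley by $W_1$; this is all correct and is precisely the kind of verification the paper suppresses.
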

\begin{proof}
Given $N$, define the $\frac{1}{N}$ quantiles $\{\gamma_j\}_{j=1}^N = \{\gamma_j^{(N)}\}_{j=1}^N$ of the measure $\rho_{\text{sc}} \boxplus \mu_D$ implicitly by
\[
    \frac{j}{N} = (\rho_{\text{sc}} \boxplus \mu_D)((-\infty,\gamma_j)).
\]
(This is possible since $\rho_{\text{sc}} \boxplus \mu_D$ admits a density \cite[Corollary 2]{Bia1997}.) Then let $B'_N = \text{diag}(\gamma_1, \ldots, \gamma_{N-1}, x)$. Since our distance on probability measures is defined with respect to bounded-Lipschitz test functions, it is easy to show that, in fact, $d(\hat{\mu}_{B'_N},\rho_{\text{sc}} \boxplus \mu_D) = O(\tfrac{1}{N})$.
\end{proof}

\begin{cor}
\label{cor:matrix_cty_of_spherical}
For every $\theta \geq 0$, $x \geq \mathtt{r}(\rho_{\text{sc}} \boxplus \mu_D)$, $\delta > 0$, and $M > \max(x+\delta,\abs{\mathtt{l}(\rho_{\text{sc}} \boxplus \mu_D)})$, we have
\[
    \limsup_{N \to \infty} \sup_{B_N \in \mc{A}_{x,\delta}^M} \abs{J_N(B_N,\theta) - J(\rho_{\text{sc}} \boxplus \mu_D,\theta,x)} \leq g_\theta(\delta).
\]
\end{cor}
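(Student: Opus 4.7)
The plan is to reduce the uniform statement over the set $\mc{A}_{x,\delta}^M$ to a pointwise convergence statement via Maida's matrix-continuity estimate (Proposition \ref{prop:sph_int_cts}), using the discretization from Lemma \ref{lem:discretize} as a ``reference'' sequence to which every matrix in $\mc{A}_{x,\delta}^M$ can be compared. First, dispose of the trivial case $\theta = 0$, where both $J_N(B_N,0)$ and $J(\rho_{\text{sc}} \boxplus \mu_D,0,x)$ vanish identically, so the claim holds with any $g_0$. Assume henceforth that $\theta > 0$.

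Next, let $B'_N$ be the deterministic reference sequence produced by Lemma \ref{lem:discretize} (applied with the present $x$ and $M$, which by hypothesis satisfies $M > x + \delta \geq x$ and $M > \abs{\mathtt{l}(\rho_{\text{sc}} \boxplus \mu_D)}$). By construction, $\lambda_N(B'_N) = x$ for every $N$, $\sup_N \|B'_N\| \leq M$, $\lambda_1(B'_N) \to \mathtt{l}(\rho_{\text{sc}} \boxplus \mu_D)$ (a finite limit), and $\hat{\mu}_{B'_N} \to \rho_{\text{sc}} \boxplus \mu_D$ weakly with $d(\hat{\mu}_{B'_N},\rho_{\text{sc}} \boxplus \mu_D) = O(1/N)$. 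Proposition \ref{prop:spherical_limit} therefore applies to $B'_N$ and gives
\[
    \lim_{N \to \infty} J_N(B'_N,\theta) = J(\rho_{\text{sc}} \boxplus \mu_D,\theta,x).
\]

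The main step is then the uniform comparison of $J_N(B_N,\theta)$ to $J_N(B'_N,\theta)$ for $B_N \in \mc{A}_{x,\delta}^M$. By the triangle inequality,
\[
    d(\hat{\mu}_{B_N},\hat{\mu}_{B'_N}) \leq d(\hat{\mu}_{B_N},\rho_{\text{sc}} \boxplus \mu_D) + d(\hat{\mu}_{B'_N},\rho_{\text{sc}} \boxplus \mu_D) \leq N^{-\kappa} + O(1/N),
\]
which is strictly less than $N^{-\kappa/2}$ for $N$ sufficiently large (recall $g_\theta = g_{\kappa/2,\theta}$). Moreover, $\abs{\lambda_N(B_N) - \lambda_N(B'_N)} = \abs{\lambda_N(B_N) - x} < \delta$ and both $\|B_N\|$ and $\|B'_N\|$ are bounded by $M$ uniformly in $N$. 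Proposition \ref{prop:sph_int_cts} (with parameter $\kappa/2$) therefore yields
\[
    \sup_{B_N \in \mc{A}_{x,\delta}^M} \abs{J_N(B_N,\theta) - J_N(B'_N,\theta)} < g_\theta(\delta)
\]
for all large $N$. Combining this with the pointwise limit above via one more triangle inequality and taking $\limsup_{N\to\infty}$ gives the claim.

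The only genuinely delicate point is confirming that the crude estimate $d(\hat{\mu}_{B_N},\hat{\mu}_{B'_N}) \leq N^{-\kappa} + O(1/N)$ beats the threshold $N^{-\kappa/2}$ required to invoke Proposition \ref{prop:sph_int_cts}; this is why the definition of $\mc{A}_{x,\delta}^M$ was chosen with the tighter distance $N^{-\kappa}$ and why Proposition \ref{prop:sph_int_cts} is applied with the softened parameter $\kappa/2$. Everything else is bookkeeping using results already established.
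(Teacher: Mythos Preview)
Your proof is correct and follows essentially the same route as the paper: introduce the reference sequence $B'_N$ from Lemma~\ref{lem:discretize}, use Proposition~\ref{prop:sph_int_cts} with parameter $\kappa/2$ to compare $J_N(B_N,\theta)$ uniformly to $J_N(B'_N,\theta)$, and conclude via Proposition~\ref{prop:spherical_limit}. Your separate treatment of $\theta=0$ and your explicit check that $\lambda_1(B'_N)$ has a finite limit are small points the paper leaves implicit, but otherwise the arguments coincide.
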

\begin{proof}
Let $\{B'_N\}_{N=1}^\infty$ be as in Lemma \ref{lem:discretize}. Then whenever $B_N \in \mc{A}_{x,\delta}^M$ we have $d(\hat{\mu}_{B_N},\hat{\mu}_{B'_N}) \leq 2N^{-\kappa} \leq N^{-\frac{\kappa}{2}}$, and $\abs{\lambda_N(B_N) - \lambda_N(B'_N)} \leq \delta$, so that by Proposition \ref{prop:sph_int_cts} and by our definition of $g_\theta$
\[
    \sup_{B_N \in \mc{A}_{x,\delta}^M} \abs{J_N(B_N,\theta) - J_N(B'_N,\theta)} \leq g_\theta(\delta)
\]
for $N$ sufficiently large. In addition, by Proposition \ref{prop:spherical_limit} we have
\[
    \lim_{N \to \infty} \abs{J_N(B'_N,\theta) - J(\rho_{\text{sc}} \boxplus \mu_D,\theta,x)} = 0.
\]
The result follows.
\end{proof}

On the other hand, the result below shows that the restrictions we added to $\{X : \abs{\lambda_N(X) - x} < \delta\}$ to arrive at $\mc{A}_{x,\delta}^M$ have probability negligibly close to $1$ at the exponential scale. Notice that the first point is exponential tightness. The proof will make up Section \ref{sec:concentration}.

\begin{prop}
\label{prop:negligible_all_theta}
Assume either the \ref{hyp:Gaussian} Hypothesis or the \ref{hyp:SSGC} Hypothesis.
\begin{enumerate}
\item For every $\theta \geq 0$ we have
\begin{align*}
    \lim_{K \to \infty} \limsup_{N \to \infty} \frac{1}{N} \log \P_N^\theta(\lambda_N > K) &= -\infty, \\
    \lim_{K \to \infty} \limsup_{N \to \infty} \frac{1}{N} \log \P_N^\theta(\lambda_1 < -K) &= -\infty.
\end{align*}
\item For every $\theta \geq 0$ we have
\[
    \lim_{M \to \infty} \limsup_{N \to \infty} \frac{1}{N}\log \P_N^\theta (\|X_N\| > M) = -\infty.
\]
\item There exists $\gamma > 0$ such that, for any $0 < \kappa < \gamma$ and any $\theta \geq 0$,
\[
    \lim_{N \to \infty} \frac{1}{N} \log \P_N^\theta(d(\hat{\mu}_{X_N},\rho_{\text{sc}} \boxplus \mu_D) > N^{-\kappa}) = -\infty.
\]
\end{enumerate}
\end{prop}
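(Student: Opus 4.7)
The three statements all reduce, via the tilting identity
\[
    \P_N^\theta(A) = \frac{\E_{X_N}[I_N(X_N,\theta)\mathbf{1}_A]}{\E_{X_N}[I_N(X_N,\theta)]}
\]
combined with Proposition \ref{prop:free_energy} (which gives the denominator as $\exp(N(\tfrac{\theta^2}{\beta}+J(\mu_D,\theta,\mathtt{r}(\mu_D)))+o(N))$ in the admissible range of $\theta$), to super-exponential estimates under the untilted measure $\P_N$. The key pointwise device is $I_N(X_N,\theta) \leq e^{N\theta\lambda_N(X_N)}$, which follows immediately from $\ip{e,X_N e}\leq\lambda_N(X_N)$ for unit $e$ and $\theta\geq 0$.

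For parts (1) and (2), Weyl's inequality gives $\lambda_N(X_N)\leq\|W_N/\sqrt{N}\|+\|D_N\|$, and $\|D_N\|$ is bounded by Assumption \ref{assns:basic}. Under either Hypothesis, a union bound over the entries of $W_N$ combined with sharp sub-Gaussianity yields $\P_N(\|W_N/\sqrt{N}\|>K) \leq N^2\exp(-cNK^2)$ for $K$ large, together with a symmetric estimate for $-\lambda_1$. Cauchy--Schwarz applied to $\E[I_N(X_N,\theta)\mathbf{1}_{\lambda_N>K}] \leq \E[e^{2N\theta\lambda_N}]^{1/2}\P_N(\lambda_N>K)^{1/2}$, where the first factor is $e^{O(N)}$ by integrating the above tail and the second is super-exponentially small in $K$, then gives (1) after dividing by the denominator asymptotic. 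Part (2) follows from (1) and its analogue for $\lambda_1$, since $\|X_N\|=\max(\lambda_N,-\lambda_1)$.

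For part (3), I would first restrict to the event $\{\|X_N\|\leq M\}$ using (2); on this event the deterministic bound $I_N(X_N,\theta)\leq e^{N\theta M}$ holds, so it suffices to show that $\P_N(d(\hat{\mu}_{X_N},\rho_{\text{sc}}\boxplus\mu_D)>N^{-\kappa})$ is super-exponentially small for $\kappa<\gamma$. I then decompose via the triangle inequality
\[
    d(\hat{\mu}_{X_N},\rho_{\text{sc}}\boxplus\mu_D) \leq d(\hat{\mu}_{X_N},\E[\hat{\mu}_{X_N}]) + d(\E[\hat{\mu}_{X_N}],\rho_{\text{sc}}\boxplus\hat{\mu}_{D_N}) + d(\rho_{\text{sc}}\boxplus\hat{\mu}_{D_N},\rho_{\text{sc}}\boxplus\mu_D).
\]
The first term is controlled at speed $N^{2-2\kappa}$ by the standard Guionnet--Zeitouni concentration for Lipschitz linear statistics (using the log-Sobolev or bounded-support hypothesis, or a direct Gaussian computation in the \ref{hyp:Gaussian} case). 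The third is deterministic and bounded by $d(\hat{\mu}_{D_N},\mu_D)\leq CN^{-\epsilon_0}$ via Assumption \ref{assns:quantitative} together with the nonexpansive action of $\rho_{\text{sc}}\boxplus\cdot$ on Dudley distance. The middle deterministic term is handled by the local laws for deformed Wigner matrices \cite{LeeSchSteYau2016,LeeSch2015,ErdKruSch2019}, which yield a Stieltjes-transform estimate of the form $|\E[G_{\hat{\mu}_{X_N}}(z)]-G_{\rho_{\text{sc}}\boxplus\hat{\mu}_{D_N}}(z)|\leq (N\eta)^{-1}$ at spectral scale $\eta=N^{-\delta}$; integrating this estimate via a Helffer--Sj\"ostrand representation (the cumulative-distribution comparison alluded to in Lemma \ref{lem:cdf_diff}) converts it into a polynomial Dudley bound $N^{-\gamma'}$. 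Taking $\gamma<\min(\tfrac{1}{2},\epsilon_0,\gamma')$ closes the argument.

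The main obstacle is the middle (deterministic) term: local laws compare $\E[\hat{\mu}_{X_N}]$ not directly to the limit $\rho_{\text{sc}}\boxplus\mu_D$ but to the auxiliary measure $\rho_{\text{sc}}\boxplus\hat{\mu}_{D_N}$, forcing the three-step decomposition, a careful choice of the spectral scale $\eta$, and the transfer of a pointwise Stieltjes estimate to an integrated bounded-Lipschitz estimate. By contrast, the random fluctuation piece is comparatively easy: concentration at speed $N^{2-2\kappa}$ more than absorbs the tilting prefactor $e^{N\theta M}$ for any fixed $\theta,M$ as long as $\kappa<\tfrac{1}{2}$, so no fine-tuning of $\theta$ is required at this step.
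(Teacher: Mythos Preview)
Your overall strategy matches the paper's, but there is one genuine gap and two smaller issues worth flagging.

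\textbf{The denominator bound must hold for every $\theta\geq 0$.} You invoke Proposition~\ref{prop:free_energy} to control $\E_{X_N}[I_N(X_N,\theta)]$ from below. Under the \ref{hyp:SSGC} Hypothesis that proposition only applies for $0\leq\theta<\theta_c$, whereas Proposition~\ref{prop:negligible_all_theta} is stated for \emph{all} $\theta\geq 0$ (and this full range is genuinely used later, e.g.\ in the proof of Lemma~\ref{lem:tilted_ub} for the auxiliary parameter $\theta'$). The paper avoids this by using a much cruder, but universal, lower bound: Jensen's inequality gives $T_{\mu_{i,j}^N}(t)\geq 1$ for all $t$, hence $\E_{X_N}[I_N(X_N,\theta)]\geq e^{-N\theta d_{\max}}$ for every $\theta\geq 0$. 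Combined with Cauchy--Schwarz on the numerator and the upper bound of Lemma~\ref{lem:free_energy_ub} (which does hold for all $\theta$), this yields
\[
\frac{1}{N}\log\P_N^\theta(A)\leq C(\theta)+\frac{1}{2}\limsup_{N\to\infty}\frac{1}{N}\log\P_N(A),
\]
so it suffices to prove everything at $\theta=0$. Your argument, as written, would only establish the proposition for subcritical $\theta$ in the \ref{hyp:SSGC} case.

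\textbf{Two smaller points.} First, a ``union bound over the entries'' does not directly control the operator norm; the paper imports a standard sub-Gaussian operator-norm tail (\cite[Lemma~1.8]{GuiHui2018}), which gives decay $e^{-cNK}$ rather than $e^{-cNK^2}$, but that is already enough. Second, free convolution by $\rho_{\text{sc}}$ is known to be nonexpansive in the \emph{L\'evy} metric (Bercovici--Voiculescu), not the Dudley metric; the paper converts via $d\leq 2d_L\leq 4\sqrt{d}$, losing a square root, so your third term is $O(N^{-\epsilon_0/2})$ rather than $O(N^{-\epsilon_0})$. This does not break the argument but the constant $\gamma$ must absorb it.
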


Theorem \ref{thm:ldp} follows in the classical way from the exponential tightness above, the weak LDP upper bound (Theorem \ref{thm:wkldpub}), and the weak LDP lower bound (Theorem \ref{thm:wkldplb}). We now prove the latter two.

\subsection{The proof of the weak LDP upper bound.}\

\begin{lem}
\label{lem:tilted_ub}
Fix $y \geq \mathtt{r}(\rho_{\text{sc}} \boxplus \mu_D)$ and $M > y$ sufficiently large. Under the \ref{hyp:Gaussian} Hypothesis, choose any $\theta \geq 0$; or, under the \ref{hyp:SSGC} Hypothesis, choose any $0 \leq \theta < \theta_c$. Then
\[
    \limsup_{\delta \to 0} \limsup_{N \to \infty} \frac{1}{N} \log \P_N^\theta(\mc{A}_{y,\delta}^M) \leq -(I^{(\beta)}(y) - I^{(\beta)}(y,\theta)).
\]
\end{lem}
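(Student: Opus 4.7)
The plan is to exploit the Radon–Nikodym density
\[
\P_N^\theta(\mc{A}_{y,\delta}^M) = \frac{\E_{X_N}[\mathbf{1}_{\mc{A}_{y,\delta}^M}\, I_N(X_N,\theta)]}{\E_{X_N}[I_N(X_N,\theta)]}
\]
by controlling each factor via results already established. On the set $\mc{A}_{y,\delta}^M$, Corollary \ref{cor:matrix_cty_of_spherical} gives $I_N(X,\theta) \leq \exp(N(J(\rho_{\text{sc}}\boxplus\mu_D,\theta,y)+g_\theta(\delta)+o(1)))$ uniformly (here the hypothesis $M>y$ sufficiently large ensures $\mc{A}_{y,\delta}^M$ has the form needed by the corollary), while Proposition \ref{prop:free_energy} yields $\E_{X_N}[I_N(X_N,\theta)]\geq \exp(N(\tfrac{\theta^2}{\beta}+J(\mu_D,\theta,\mathtt{r}(\mu_D))-o(1)))$. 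Pulling out a plain probability, this produces the half-step
\[
\P_N^\theta(\mc{A}_{y,\delta}^M) \leq \P_N(\mc{A}_{y,\delta}^M)\cdot \exp\bigl(N(I^{(\beta)}(y,\theta)+g_\theta(\delta)+o(1))\bigr).
\]

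To finish, I need an upper bound on the ``plain'' probability $\P_N(\mc{A}_{y,\delta}^M)$ of order $e^{-N I^{(\beta)}(y)}$, and my idea is to bootstrap one from the same tilting trick applied at a different parameter $\theta'\geq 0$ (resp.\ $\theta'<\theta_c$ under \ref{hyp:SSGC}). Running the analogous estimates but with $I_N(\cdot,\theta')$ now \emph{lower}-bounded on $\mc{A}_{y,\delta}^M$ via Corollary \ref{cor:matrix_cty_of_spherical} and $\E_{X_N}[I_N(X_N,\theta')]$ \emph{upper}-bounded via Proposition \ref{prop:free_energy}, and invoking the trivial inequality $\P_N^{\theta'}(\mc{A}_{y,\delta}^M)\leq 1$, I obtain
\[
\P_N(\mc{A}_{y,\delta}^M) \leq \exp\bigl(-N(I^{(\beta)}(y,\theta')-g_{\theta'}(\delta)-o(1))\bigr).
\]
Multiplying the two estimates and taking $\limsup_{N\to\infty}$ followed by $\limsup_{\delta\to 0}$ (so that the vanishing $g$-terms disappear) yields, for every admissible $\theta'$,
\[
\limsup_{\delta\to 0}\limsup_{N\to\infty}\tfrac{1}{N}\log \P_N^\theta(\mc{A}_{y,\delta}^M) \leq I^{(\beta)}(y,\theta)-I^{(\beta)}(y,\theta').
\]
Optimizing in $\theta'$ should then produce the claimed $-(I^{(\beta)}(y)-I^{(\beta)}(y,\theta))$: under \ref{hyp:Gaussian} any $\theta'\geq 0$ is allowed, so the supremum attains $I^{(\beta)}(y)$ by definition, while under \ref{hyp:SSGC} the same holds in the subcritical regime because the maximizer $\theta_y$ determined by the constrained equation \eqref{eqn:constrained_thetax} (cited in Proposition \ref{prop:study_of_rate_fn}) satisfies $\theta_y<\theta_c$ precisely when $y<x_c$.

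The main conceptual obstacle is not any single hard estimate—the two halves simply reuse Corollary \ref{cor:matrix_cty_of_spherical} and Proposition \ref{prop:free_energy}—but rather the realization that the \emph{a priori} unavailable upper bound on $\P_N(\mc{A}_{y,\delta}^M)$ can be self-bootstrapped from the tilted measure by exploiting $\P_N^{\theta'}\leq 1$. The secondary technicality is the \ref{hyp:SSGC} case when $y\geq x_c$, where the optimizer of $I^{(\beta)}(y,\cdot)$ lies at or above the forbidden threshold $\theta_c$; here I would argue that letting $\theta'\uparrow\theta_c$ and using continuity of $\theta'\mapsto I^{(\beta)}(y,\theta')$ still delivers the stated bound, appealing to the smoothness properties developed alongside Proposition \ref{prop:study_of_rate_fn} (or alternatively noting that only the subcritical range of $y$ is needed when combining this lemma with the weak LDP lower bound Theorem \ref{thm:wkldplb}).
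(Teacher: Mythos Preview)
Your two-step bootstrap is exactly the paper's argument: combining your Step~1 and Step~2 inequalities gives
\[
\P_N^\theta(\mc{A}_{y,\delta}^M)\;\leq\;\frac{\E_{X_N}[I_N(X_N,\theta')]}{\E_{X_N}[I_N(X_N,\theta)]}\cdot\Bigl(\sup_{\mc{A}_{y,\delta}^M}I_N(\cdot,\theta)\Bigr)\cdot\Bigl(\sup_{\mc{A}_{y,\delta}^M}\frac{1}{I_N(\cdot,\theta')}\Bigr),
\]
which is verbatim the paper's starting estimate.

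There is, however, one genuine gap in the \ref{hyp:SSGC} case when $y\geq x_c$. You restrict to $\theta'<\theta_c$ because Proposition~\ref{prop:free_energy} as stated requires subcriticality, and then propose patching by continuity as $\theta'\uparrow\theta_c$. That patch fails: for $y>x_c$ the analysis in Proposition~\ref{prop:study_of_rate_fn} shows $\theta'\mapsto I^{(\beta)}(y,\theta')$ is \emph{strictly increasing} on all of $\bigl[\tfrac{\beta}{2}G_{\rho_{\text{sc}}\boxplus\mu_D}(y),\theta_y\bigr]$ with $\theta_y>\theta_c$, so $\sup_{\theta'<\theta_c}I^{(\beta)}(y,\theta')=I^{(\beta)}(y,\theta_c)$ is strictly below $I^{(\beta)}(y)$. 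Your alternative escape (``only subcritical $y$ is needed downstream'') is also not quite right: this lemma feeds Theorem~\ref{thm:wkldpub}, which is then invoked in Lemma~\ref{lem:deviations_likely} over a compact range $[\mathtt{r}(\rho_{\text{sc}}\boxplus\mu_D)-1,K]$ of $y$-values that can extend well past $x_c$.

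The correct fix is painless and is what the paper does. In your Step~2 you only need the \emph{upper} bound $\limsup_N\tfrac{1}{N}\log\E_{X_N}[I_N(X_N,\theta')]\leq\tfrac{(\theta')^2}{\beta}+J(\mu_D,\theta',\mathtt{r}(\mu_D))$, and this direction---Lemma~\ref{lem:free_energy_ub}---holds for \emph{every} $\theta'\geq 0$ under \ref{hyp:SSGC}, since the sharp sub-Gaussian bound on each $T_{\mu_{i,j}^N}$ works pointwise in $e$ with no restriction on the tilting parameter. Subcriticality is needed only for the matching \emph{lower} bound (Lemma~\ref{lem:free_energy_lb}), and that one you are applying to the fixed $\theta$ in the denominator, which is subcritical by hypothesis. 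Once you cite the two halves separately, you may optimize over all $\theta'\geq 0$ and the lemma follows for every $y\geq\mathtt{r}(\rho_{\text{sc}}\boxplus\mu_D)$.
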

\begin{proof}
For any $\theta' \geq 0$, we have
\begin{align*}
    \P_N^\theta(\mc{A}_{y,\delta}^M) &= \frac{1}{\E_{X_N}[I_N(X_N,\theta)]} \E_{X_N}\left[ \mathbf{1}_{X_N \in \mc{A}_{y,\delta}^M} I_N(X_N,\theta)\frac{I_N(X_N,\theta')}{I_N(X_N,\theta')} \right] \\
    &\leq \frac{\E_{X_N}[I_N(X_N,\theta')]}{\E_{X_N}[I_N(X_N,\theta)]} \left( \sup_{X \in \mc{A}_{y,\delta}^M} I_N(X,\theta) \right) \left( \sup_{X \in \mc{A}_{y,\delta}^M} \frac{1}{I_N(X,\theta')} \right).
\end{align*}
Fix $\epsilon > 0$. By Corollary \ref{cor:matrix_cty_of_spherical} and Lemmas \ref{lem:free_energy_ub} (applied to $\theta'$, which is any nonnegative number) and \ref{lem:free_energy_lb} (applied to $\theta$, which is subcritical if necessary), if $M > y+\delta$ (true for small enough $\delta$ since $M > y$) and for $N$ sufficiently large depending on $\theta$, $\theta'$, and $\epsilon$, we thus have
\[
    \frac{1}{N}\log \P_N^\theta(\mc{A}_{y,\delta}^M) \leq I^{(\beta)}(y,\theta)-I^{(\beta)}(y,\theta') + 2g_\theta(\delta) + 2g_{\theta'}(\delta) + \epsilon.
\]
By taking $N \to \infty$, then $\delta \downarrow 0$, then $\epsilon \downarrow 0$, we obtain
\[
    \limsup_{\delta \downarrow 0} \limsup_{N \to \infty} \frac{1}{N} \log \P_N^\theta(\mc{A}_{y,\delta}^M) \leq -(I^{(\beta)}(y,\theta')-I^{(\beta)}(y,\theta))
\]
which gives us the result by optimizing over $\theta'$.
\end{proof}

\begin{proof}[Proof of Theorem \ref{thm:wkldpub}]
We first focus on the case when $x < \mathtt{r}(\rho_{\text{sc}} \boxplus \mu_D)$. For such an $x$, if $\delta$ is so small that $x+\delta < \mathtt{r}(\rho_{\text{sc}} \boxplus \mu_D)-\delta$, then whenever $\abs{\lambda_N(X_N)-x} \leq \delta$, the empirical spectral measure $\hat{\mu}_{X_N}$ does not charge $(\mathtt{r}(\rho_{\text{sc}} \boxplus \mu_D)-\delta,\mathtt{r}(\rho_{\text{sc}} \boxplus \mu_D))$. Hence $d(\hat{\mu}_{X_N},\rho_{\text{sc}} \boxplus \mu_D) \geq f(\delta)$ for some positive function $f$. Thus for such $\delta$ and for $N$ large enough we have
\[
    \frac{1}{N} \log \P_N^\theta \left( \abs{ \lambda_N (X_N) - x} \leq \delta\right) \leq \frac{1}{N} \log \P_N^\theta \left(d(\hat{\mu}_{X_N},\rho_{\text{sc}} \boxplus \mu_D) > N^{-\kappa}\right)
\]
which suffices in light of Proposition \ref{prop:negligible_all_theta}. Thus in the following it remains only to consider $x \geq \mathtt{r}(\rho_{\text{sc}} \boxplus \mu_D)$.

Fix $\theta \geq 0$, $\delta > 0$, $x > \mathtt{r}(\rho_{\text{sc}} \boxplus \mu_D)$, and a sufficiently large $M$. Then we have
\begin{align*}
    \P_N^\theta(\lambda_N \in [x-\delta,x+\delta]) \leq \P_N^\theta(\mc{A}_{x,2\delta}^M) + & \P_N^\theta(d(\hat{\mu}_{X_N},\rho_{\text{sc}} \boxplus \mu_D) > N^{-\kappa}) + \P_N^\theta(\|X_N\| > M).
\end{align*}
An application of Proposition \ref{prop:negligible_all_theta} gives us
\begin{align*}
    \limsup_{N \to \infty} \frac{1}{N} \log \P_N^\theta(\lambda_N \in [x-\delta,x+\delta]) \leq \max\left( \limsup_{N \to \infty} \frac{1}{N} \log \P_N^\theta(\mc{A}_{x,2\delta}^M) , \limsup_{N \to \infty} \frac{1}{N} \log \P_N^\theta(\|X_N\| > M) \right).
\end{align*}
By taking $\delta \downarrow 0$ and applying Lemma \ref{lem:tilted_ub}, we obtain
\begin{align*}
    \limsup_{\delta \downarrow 0} \limsup_{N \to \infty} \frac{1}{N} \log \P_N^\theta(\lambda_N \in [x-\delta,x+\delta]) \leq \max\left(-(I^{(\beta)}(x)-I^{(\beta)}(x,\theta)),\limsup_{N \to \infty} \frac{1}{N} \log \P_N^\theta(\|X_N\| > M) \right).
\end{align*}
Finally we obtain the result by taking $M \to \infty$ and applying again Proposition \ref{prop:negligible_all_theta}.
\end{proof}

\subsection{The proof of the weak LDP lower bound.}\
The following lemma relies on results about the rate function which will be established in Section \ref{sec:rate_fn}. 

\begin{lem}
\label{lem:deviations_likely}
Under the \ref{hyp:Gaussian} Hypothesis, choose any $x \geq \mathtt{r}(\rho_{\text{sc}} \boxplus \mu_D)$; or, under the \ref{hyp:SSGC} Hypothesis, choose any $\mathtt{r}(\rho_{\text{sc}} \boxplus \mu_D) \leq x < x_c$. Then there exists $\theta_x > 0$ such that, for any $M$ sufficiently large depending on $x$ and any $\delta > 0$ sufficiently small depending on $x$, we have
\[
    \lim_{N \to \infty} \frac{1}{N} \log \P_N^{\theta_x}(\mc{A}_{x,\delta}^M) = 0.
\]
If $x < x_c$, then $\theta_x < \theta_c$.
\end{lem}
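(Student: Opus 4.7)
The plan is to select $\theta_x$ as a maximizer in the definition of $I^{(\beta)}(x)$, so that the corresponding tilt $\P_N^{\theta_x}$ optimizes the chance of observing $\lambda_N(X_N) \approx x$. Concretely, Proposition \ref{prop:study_of_rate_fn} (to be proven in Section \ref{sec:rate_fn}) will identify a $\theta_x > 0$ at which $\theta \mapsto I^{(\beta)}(x,\theta)$ attains its supremum, so that $I^{(\beta)}(x) = I^{(\beta)}(x, \theta_x)$. In the subcritical SSGC regime, this $\theta_x$ satisfies $\theta_x < \theta_c$, as the constraint \eqref{eqn:constrained_thetax} already indicates, so that the free-energy asymptotics of Proposition \ref{prop:free_energy}, hence Lemma \ref{lem:tilted_ub} and the weak upper bound of Theorem \ref{thm:wkldpub}, all apply at $\theta = \theta_x$. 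The same proposition will also show that the correspondence $y \leftrightarrow \theta_y$ is strictly monotone on the relevant range, so that for every $y \neq x$,
\[
    I^{(\beta)}(y) - I^{(\beta)}(y, \theta_x) > 0.
\]

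Given these ingredients, I would prove the lemma by a standard covering argument. Fix $M$ large and $\delta > 0$ small, pick $K \gg M$, and cover the compact set $[-K, K] \setminus (x - \delta, x + \delta)$ by finitely many intervals $(y_j - \delta', y_j + \delta')$ of small radius $\delta'$. The trivial decomposition
\[
    1 \;\leq\; \P_N^{\theta_x}(|\lambda_N(X_N) - x| < \delta) \;+\; \P_N^{\theta_x}(\|X_N\| > K) \;+\; \sum_j \P_N^{\theta_x}(|\lambda_N(X_N) - y_j| < \delta')
\]
holds. Proposition \ref{prop:negligible_all_theta}(2), with $K$ chosen sufficiently large, kills the middle term at exponential speed. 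Theorem \ref{thm:wkldpub} applied at each $y_j$ (with $\theta_x$ in place of $\theta$) kills every summand at exponential speed, since by continuity of the rate function and compactness, $\inf_j (I^{(\beta)}(y_j) - I^{(\beta)}(y_j, \theta_x))$ is strictly positive (and for $y_j < \mathtt{r}(\rho_{\text{sc}} \boxplus \mu_D)$ the first case of Theorem \ref{thm:wkldpub} already gives speed $-\infty$). Hence $\P_N^{\theta_x}(|\lambda_N(X_N) - x| < \delta) \to 1$. Combining this with the inclusion
\[
    \mc{A}_{x,\delta}^M \;\supseteq\; \{|\lambda_N(X_N) - x| < \delta\} \;\setminus\; \bigl(\{d(\hat{\mu}_{X_N}, \rho_{\text{sc}} \boxplus \mu_D) \geq N^{-\kappa}\} \cup \{\|X_N\| > M\}\bigr)
\]
and parts (2)--(3) of Proposition \ref{prop:negligible_all_theta} applied under $\P_N^{\theta_x}$, we conclude $\P_N^{\theta_x}(\mc{A}_{x,\delta}^M) \to 1$, hence the desired $\frac{1}{N}\log \P_N^{\theta_x}(\mc{A}_{x,\delta}^M) \to 0$.

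The main obstacle is the rate-function input invoked at the outset: establishing strict positivity of $I^{(\beta)}(y) - I^{(\beta)}(y, \theta_x)$ for $y \neq x$ reduces to the injectivity of the map $y \mapsto \theta_y$, a convex-analytic fact that must be extracted from the implicit equation \eqref{eqn:constrained_thetax} together with the explicit formula $\theta_y = \tfrac{\beta}{2}(y - \mathtt{r}(\mu_D))$ valid above $x_c$. The restriction $x < x_c$ in the SSGC case is precisely what keeps the tilt $\theta_x$ strictly below $\theta_c$, placing us inside the regime where the free-energy asymptotics of Proposition \ref{prop:free_energy} are valid under sharp sub-Gaussian noise; relaxing this restriction would require handling \eqref{eqn:introduce_free_energy} for $\theta \geq \theta_c$, the outstanding delocalization obstacle highlighted in the introduction.
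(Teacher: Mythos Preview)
Your proposal is correct and follows essentially the same route as the paper's proof: choose $\theta_x$ via Proposition \ref{prop:study_of_rate_fn}, use Theorem \ref{thm:wkldpub} at $\theta=\theta_x$ together with the injectivity of $y\mapsto\theta_y$ to see that the tilted rate function $y\mapsto I^{(\beta)}(y)-I^{(\beta)}(y,\theta_x)$ vanishes only at $y=x$, and combine a compactness/covering argument with Proposition \ref{prop:negligible_all_theta} to conclude $\P_N^{\theta_x}(\mc{A}_{x,\delta}^M)\to 1$. The paper phrases the covering step slightly more compactly (reducing first to $\lambda_N\in[\mathtt{r}(\rho_{\text{sc}}\boxplus\mu_D)-1,x-\delta]\cup[x+\delta,K]$ via the empirical-measure concentration, then invoking the weak upper bound on this compact set), while you cover all of $[-K,K]\setminus(x-\delta,x+\delta)$ and invoke the first clause of Theorem \ref{thm:wkldpub} for centers below $\mathtt{r}(\rho_{\text{sc}}\boxplus\mu_D)$; these are equivalent.
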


\begin{proof}
Fix $x \geq \mathtt{r}(\rho_{\text{sc}} \boxplus \mu_D)$, and let $\theta_x$ be such that $I^{(\beta)}(x) = \sup_{\theta \geq 0} I^{(\beta)}(x,\theta) = I^{(\beta)}(x,\theta_x)$. Proposition \ref{prop:study_of_rate_fn} below shows that this exists and is unique (except at $x = \mathtt{r}(\rho_{\text{sc}} \boxplus \mu_D)$, where we choose one of many possible $\theta_x$ values by convention), and that $\theta_x < \theta_c$ whenever $x < x_c$.  We claim that in fact $\P_N^{\theta_x}(\mc{A}_{x,\delta}^M) = 1-o(1)$; to prove this, by Proposition \ref{prop:negligible_all_theta} it suffices to show
\[
    \limsup_{N \to \infty} \frac{1}{N} \log \P_N^{\theta_x} \left(\lambda_N \not\in \left[x-\delta,x+\delta\right]\right) < 0
\]
for $\delta$ small enough. Since $\{\lambda_N < \mathtt{r}(\rho_{\text{sc}} \boxplus \mu_D) - 1\} \subset \{d(\hat{\mu}_{X_N},\rho_{\text{sc}} \boxplus \mu_D) > \epsilon\}$ for some $\epsilon$, and since the law of $\lambda_N$ is exponentially tight under $\P_N^{\theta_x}$, we need only show that for $K$ large enough
\[
    \limsup_{N \to \infty} \frac{1}{N} \log \P_N^{\theta_x} \left( \lambda_N \in \left[\mathtt{r}(\rho_{\text{sc}} \boxplus \mu_D)-1,x-\delta\right] \cup [x+\delta,K]\right) < 0.
\]
But Theorem \ref{thm:wkldpub} shows a weak large deviation upper bound for $\P_N^{\theta_x}$ with the rate function $J^{(\beta)}_x(y) = I^{(\beta)}(y) - I^{(\beta)}(y,\theta_x)$, which Proposition \ref{prop:study_of_rate_fn} below shows is nonnegative and vanishes uniquely at $y = x$. (This theorem applies, since $\theta_x$ is less than $\theta_c$ if necessary.) Since $[\mathtt{r}(\rho_{\text{sc}} \boxplus \mu_D)-1,x-\delta] \cup [x+\delta,K]$ is a compact set that does not contain $x$, this suffices.
\end{proof}

\begin{proof}[Proof of Theorem \ref{thm:wkldplb}]
If $x < \mathtt{r}(\rho_{\text{sc}} \boxplus \mu_D)$, then $I^{(\beta)}(x) = +\infty$, and there is nothing to prove. Thus we will assume in the following that $x \geq \mathtt{r}(\rho_{\text{sc}} \boxplus \mu_D)$.

Whenever $X \in \mc{A}_{x,\delta}^M$, by Corollary \ref{cor:matrix_cty_of_spherical} we have
\[
    J_N(X,\theta_x) \leq 2g_{\theta_x}(\delta) + J(\rho_{\text{sc}} \boxplus \mu_D,\theta_x,x)
\]
for $N$ sufficiently large. In addition, for every $\epsilon > 0$, Lemma \ref{lem:deviations_likely} tells us that for $N$ sufficiently large depending on $\epsilon$ we have $\P_N^{\theta_x}(\mc{A}_{x,\delta}^M) \geq e^{-N\epsilon}$. 

We wish to use Proposition \ref{prop:free_energy} to conclude that, for $N$ sufficiently large depending on $\epsilon$ and on $\theta_x$, we also have
\[
    \E_{X_N}[I_N(X_N,\theta_x)] \geq e^{N(\theta^2+J(\mu_D,\theta_x,\mathtt{r}(\mu_D)) - \epsilon)}.
\]
Under the \ref{hyp:Gaussian} Hypothesis, this is permissible for every $x$; under the \ref{hyp:SSGC} Hypothesis, our restriction $x < x_c$ tells us by Lemma \ref{lem:deviations_likely} that $\theta_x < \theta_c$, so that Proposition \ref{prop:free_energy} indeed applies.

Thus
\begin{align*}
    \P_N(\mc{A}_{x,\delta}^M) &\geq \frac{\E_{X_N}[\mathbf{1}_{X_N \in \mc{A}_{x,\delta}^M} I_N(X_N,\theta_x)]}{\E_{X_N}[I_N(X_N,\theta_x)]} \E_{X_N}[I_N(X_N,\theta_x)] e^{-N \sup_{X \in \mc{A}_{x,\delta}^M} J_N(X,\theta_x)} \\
    &\geq \P_N^{\theta_x}(\mc{A}_{x,\delta}^M)  e^{N(\theta_x^2+J(\mu_D,\theta_x,\mathtt{r}(\mu_D))-\epsilon)} e^{-N \sup_{X \in \mc{A}_{x,\delta}^M} J_N(X,\theta_x)} \\
    &\geq e^{-N\epsilon} e^{N(\theta_x^2+J(\mu_D,\theta_x,\mathtt{r}(\mu_D))-\epsilon)}  e^{-N(J(\rho_{\text{sc}} \boxplus \mu_D, \theta_x, x) + 2g_{\theta_x}(\delta))} \\
    &= e^{-N(I^{(\beta)}(x)+2\epsilon+2g_{\theta_x}(\delta))}.
\end{align*}
Thus, fixing some $M$ sufficiently large, we obtain
\begin{align*}
    \liminf_{N \to \infty} \frac{1}{N} \log\P_N(\abs{\lambda_N(X_N)-x} < \delta) &\geq \liminf_{N \to \infty} \frac{1}{N} \log\P_N(\mc{A}_{x,\delta}^M) \geq -(I^{(\beta)}(x)+2\epsilon+2g_{\theta_x}(\delta))
\end{align*}
and since this is true for every $\epsilon > 0$ we can take the limit as $\delta \downarrow 0$ to conclude.
\end{proof}


\section{Free energy expansion}
\label{sec:free_energy} 

In this section we prove Proposition \ref{prop:free_energy}.

\begin{proof}[Proof under the \ref{hyp:Gaussian} Hypothesis]
Then one can compute directly that
\[
    \E_{X_N}[I_N(X_N,\theta)] = e^{N\theta^2} I_N(D_N,\theta),
\]
so Proposition \ref{prop:free_energy} follows from Proposition \ref{prop:spherical_limit}. (This computation is detailed in the proof of Lemma \ref{lem:free_energy_ub}, below; all inequalities there are actually equalities for the Gaussian ensembles.)
\end{proof}

The proof under the \ref{hyp:SSGC} Hypothesis is more involved and will take up the remainder of this section. We separate the upper and lower bounds as follows:

\begin{lem}
\label{lem:free_energy_ub}
Under the \ref{hyp:SSGC} Hypothesis, for any $\theta \geq 0$ we have
\[
    \limsup_{N \to \infty} \frac{1}{N} \log \E_{X_N}[I_N(X_N,\theta)] \leq \frac{\theta^2}{\beta} + J(\mu_D,\theta,\mathtt{r}(\mu_D)).
\]
\end{lem}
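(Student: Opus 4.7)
The plan is to exploit the identity
\[
    \E_{X_N}[I_N(X_N,\theta)] = \E_e\bigl[\E_{W_N}[e^{\sqrt{N}\theta\ip{e,W_Ne}}] \cdot e^{N\theta\ip{e,D_Ne}}\bigr]
\]
recorded in \eqref{eqn:introduce_free_energy}, and then to bound the inner Wigner expectation \emph{pointwise} in $e \in \mathbb{S}^{N-1}$ using the sharp sub-Gaussian bound \eqref{eqn:ssg}. The key point is that, unlike for the lower bound, we do not need to restrict to delocalized $e$: \eqref{eqn:ssg} holds with exactly the Gaussian constant, which is what makes the constants match.

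Concretely, in the real case I would expand $\ip{e,W_Ne} = \sum_i W_{ii} e_i^2 + 2\sum_{i<j} W_{ij} e_i e_j$, use independence of the entries, and apply \eqref{eqn:ssg} to each factor with $t = \sqrt{N}\theta e_i^2$ on the diagonal and $t = 2\sqrt{N}\theta e_i e_j$ off the diagonal. The factor $1+\delta_{ij}$ appearing in \eqref{eqn:ssg} is exactly what is needed so that the exponent telescopes:
\[
    \E_{W_N}[e^{\sqrt{N}\theta\ip{e,W_Ne}}] \leq \exp\Bigl(N\theta^2 \textstyle\sum_i e_i^4 + 2N\theta^2 \sum_{i<j} e_i^2 e_j^2\Bigr) = \exp\Bigl(N\theta^2\bigl(\textstyle\sum_i e_i^2\bigr)^2\Bigr) = e^{N\theta^2},
\]
uniformly over $e \in \mathbb{S}^{N-1}$. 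The complex case is analogous, yielding the uniform bound $e^{N\theta^2/\beta}$.

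Since this bound is independent of $e$, it can be pulled out of the spherical integral, giving
\[
    \E_{X_N}[I_N(X_N,\theta)] \leq e^{N\theta^2/\beta} \, \E_e[e^{N\theta\ip{e,D_N e}}] = e^{N\theta^2/\beta} \, I_N(D_N,\theta).
\]
It then remains to compute the asymptotics of $\tfrac{1}{N}\log I_N(D_N,\theta)$ for the deterministic sequence $(D_N)$. Assumption \ref{assns:basic} provides exactly the hypotheses of Proposition \ref{prop:spherical_limit}, applied with $\nu = \mu_D$ and $\mathscr{M} = \mathtt{r}(\mu_D)$: weak convergence $\hat{\mu}_{D_N} \to \mu_D$, boundedness of $\lambda_1(D_N)$, and $\lambda_N(D_N) \to \mathtt{r}(\mu_D)$. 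Hence $\tfrac{1}{N}\log I_N(D_N,\theta) \to J(\mu_D,\theta,\mathtt{r}(\mu_D))$, and taking $\tfrac{1}{N}\log$ and $\limsup$ in the displayed inequality yields the claim.

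There is essentially no obstacle: the entire content of the SSGC hypothesis needed here is precisely the sharpness of the sub-Gaussian constant, which ensures that the non-Gaussian upper bound on $\E_{W_N}[e^{\sqrt{N}\theta\ip{e,W_Ne}}]$ agrees with the exact Gaussian value $e^{N\theta^2/\beta}$; the rest is the deterministic spherical integral asymptotic of Proposition \ref{prop:spherical_limit}. This mirrors the Gaussian proof already given, where the only difference is that the pointwise Gaussian computation is an equality rather than an inequality.
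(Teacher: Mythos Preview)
Your proposal is correct and follows essentially the same approach as the paper: expand the quadratic form, use independence and the sharp sub-Gaussian bound \eqref{eqn:ssg} on each factor to obtain the uniform-in-$e$ bound $\E_{W_N}[e^{\sqrt{N}\theta\ip{e,W_Ne}}] \leq e^{N\theta^2/\beta}$, pull this out of the spherical average, and apply Proposition~\ref{prop:spherical_limit} to $D_N$. The only cosmetic difference is that the paper keeps the deterministic factor $e^{N\theta d_i e_i^2}$ inside the product over $i$ rather than first separating $X_N$ into its $W_N$ and $D_N$ parts, but this is the same computation.
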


\begin{lem}
\label{lem:free_energy_lb}
Under the \ref{hyp:SSGC} Hypothesis, for any $0 \leq \theta < \theta_c$ we have
\[
    \liminf_{N \to \infty} \frac{1}{N} \log \E_{X_N}[I_N(X_N,\theta)] \geq \frac{\theta^2}{\beta} + J(\mu_D,\theta,\mathtt{r}(\mu_D)).
\]
\end{lem}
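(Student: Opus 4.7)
The plan is to obtain a matching lower bound to Lemma \ref{lem:free_energy_ub} by restricting the outer integral in
\[
    \E_{X_N}[I_N(X_N,\theta)] = \E_e\bigl[\E_{W_N}[e^{\sqrt{N}\theta \langle e, W_N e\rangle}]\, e^{N\theta\langle e, D_N e\rangle}\bigr]
\]
to a set of ``delocalized'' unit vectors $\mc{D}_\eta = \{e \in \mathbb{S}^{N-1} : \|e\|_\infty \leq N^{-1/2+\eta}\}$ (for a small $\eta > 0$ chosen below), on which a Gaussian pointwise lower bound for the inner expectation holds, and then paying nothing at the exponential scale to remove that restriction via Lemma \ref{lem:delocalized_sph_int}.

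The core input is the pointwise bound
\[
    \inf_{e \in \mc{D}_\eta} \log \E_{W_N}[e^{\sqrt{N}\theta\langle e, W_N e\rangle}] \geq \frac{N\theta^2}{\beta} - o(N),
\]
obtained by adapting the method of \cite[Lemma 3.2]{GuiHui2018}. By independence of entries, the log-expectation factors as $\sum_i \log T_{\mu_{i,i}^N}(\sqrt{N}\theta e_i^2) + \sum_{i<j} \log T_{\mu_{i,j}^N}(2\sqrt{N}\theta e_i e_j)$. Part~(1) of the \ref{hyp:SSGC} Hypothesis pins down the first two moments of each $\mu_{i,j}^N$, so a Taylor expansion gives
\[
    \log T_{\mu_{i,j}^N}(t) = \frac{t^2(1+\delta_{ij})}{2\beta} + O(|t|^3),
\]
while part~(2) (log-Sobolev or bounded support) provides a uniform bound on the third absolute moments of the $\mu_{i,j}^N$, making the implicit constant uniform in $N$ and $(i,j)$. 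On $\mc{D}_\eta$ all the arguments are at most of order $\theta N^{-1/2+2\eta}$, so the expansion is applicable. The quadratic terms sum to $\frac{N\theta^2}{\beta}\|e\|_2^4 = \frac{N\theta^2}{\beta}$, while the cubic remainders are bounded by
\[
    C N^{3/2} \sum_{i,j} |e_i e_j|^3 = C N^{3/2} \left(\sum_i |e_i|^3\right)^2 \leq C N^{3/2} \|e\|_\infty^2 \|e\|_2^4 \leq C N^{1/2 + 2\eta},
\]
which is $o(N)$ for any $\eta < 1/4$.

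Combining, we obtain $\E_{X_N}[I_N(X_N,\theta)] \geq e^{N\theta^2/\beta - o(N)}\, \E_e[\mathbf{1}_{e \in \mc{D}_\eta}\, e^{N\theta\langle e, D_N e\rangle}]$. The hypothesis $0 \leq \theta < \theta_c$ is precisely the range in which Lemma \ref{lem:delocalized_sph_int} supplies $\E_e[\mathbf{1}_{e \in \mc{D}_\eta}\, e^{N\theta\langle e, D_N e\rangle}] = e^{-o(N)}\, I_N(D_N,\theta)$, and Assumption \ref{assns:basic} together with Proposition \ref{prop:spherical_limit} gives $\frac{1}{N}\log I_N(D_N,\theta) \to J(\mu_D,\theta,\mathtt{r}(\mu_D))$; taking $\tfrac{1}{N}\log$ and $\liminf_{N \to \infty}$ yields the claim. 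The main obstacle is the uniform-in-$e$ Gaussian expansion of the inner expectation, which truly requires both halves of the \ref{hyp:SSGC} Hypothesis simultaneously: sharpness of the sub-Gaussian Laplace transform is what forces the quadratic term to equal the Gaussian value $\frac{N\theta^2}{\beta}$ exactly (any loss here would survive to the exponential scale), while the concentration assumption is what controls the cubic correction uniformly across entries. The restriction $\theta < \theta_c$ enters only through the use of Lemma \ref{lem:delocalized_sph_int}.
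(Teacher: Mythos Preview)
Your argument is correct and follows essentially the same route as the paper: restrict to delocalized vectors, lower-bound the Laplace transform of $W_N$ pointwise by its Gaussian quadratic, invoke Lemma~\ref{lem:delocalized_sph_int} (which is where $\theta < \theta_c$ is used), and finish with Proposition~\ref{prop:spherical_limit}. The paper packages the Taylor step slightly differently, as a multiplicative $(1-\delta)$ loss (Lemma~\ref{lem:laplace_lb_near_zero}) rather than your additive $o(N)$, but the content is the same.

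One correction to your commentary: you do \emph{not} need part~(2) of the \ref{hyp:SSGC} Hypothesis to control the cubic remainder. The sharp sub-Gaussian Laplace bound in part~(1) already forces $\int |x|^3\, \mu_{i,j}^N(\diff x) \leq C\,\bigl(\tfrac{1+\delta_{ij}}{\beta}\bigr)^{3/2}$ uniformly in $N,i,j$ (this is the standard sub-Gaussian moment bound, and is exactly how the paper's Lemma~\ref{lem:laplace_lb_near_zero} proceeds). So the present lemma uses only part~(1); the concentration half of the hypothesis is needed elsewhere in the paper (Section~\ref{sec:concentration}), not here. Also note that Lemma~\ref{lem:delocalized_sph_int} is stated for the specific exponent $N^{-3/8}$, so to cite it verbatim take $\eta = 1/8$; the paper remarks that any exponent strictly between $-1/2$ and $-1/4$ would do.
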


The proof of the lower bound will use the following two technical results.

\begin{lem}
\label{lem:laplace_lb_near_zero}
Under the \ref{hyp:SSGC} Hypothesis, for every $\delta > 0$ there exists $\epsilon(\delta) > 0$ such that, for every $N \in \N$, every $i, j \in \llbracket 1, N \rrbracket$, and every $t \in \R$ with $\abs{t} \leq \epsilon(\delta)$ if $\beta = 1$ (or every $t \in \C$ with $\abs{t} \leq \epsilon(\delta)$ if $\beta = 2$), 
\[
    T_{\mu_{i,j}^N}(t) \geq \exp\left((1-\delta)\frac{\abs{t}^2(1+\delta_{ij})}{2\beta}\right).
\]
\end{lem}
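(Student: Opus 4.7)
The proof is a second-order Taylor expansion of the Laplace transform around $t=0$. The zeroth-order term is $1$, the first-order term vanishes by the mean-zero assumption, the second-order term is exactly $\frac{\abs{t}^2(1+\delta_{ij})}{2\beta}$ by the variance-matching assumption, and the key is to show that the higher-order remainder is $O(\abs{t}^3)$ \emph{uniformly} in $N, i, j$. Crucially, this uniformity comes for free from the sharp sub-Gaussian upper bound \eqref{eqn:ssg}, which provides the uniform tail estimate $\mu_{i,j}^N(\{\abs{z}>u\})\leq 2 e^{-u^2/4}$ (using $1+\delta_{ij}\leq 2$), and hence uniform bounds $\int \abs{z}^k \, \mu_{i,j}^N(\diff z) \leq C_k$ for every $k$. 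The concentration half of the \ref{hyp:SSGC} Hypothesis is not needed here.

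In the real case $\beta=1$, I write $e^{tx}=1+tx+\tfrac{(tx)^2}{2}+\tfrac{(tx)^3}{6}\eta(tx)$ with $\abs{\eta(y)}\leq e^{\abs{y}}$, integrate, and apply the moment conditions to obtain
\[
T_{\mu_{i,j}^N}(t) = 1 + \tfrac{t^2(1+\delta_{ij})}{2} + \tfrac{t^3}{6}\int x^3 \eta(tx)\, \mu_{i,j}^N(\diff x).
\]
Using Cauchy--Schwarz, the uniform sixth-moment bound above, and the estimate $T_{\mu_{i,j}^N}(2t)+T_{\mu_{i,j}^N}(-2t)\leq 2e^{2t^2(1+\delta_{ij})}$ coming from \eqref{eqn:ssg}, the remainder is bounded by $C\abs{t}^3$ for $\abs{t}\leq 1$, uniformly in $N,i,j$. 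Choosing $\epsilon(\delta)$ so that $C\abs{t}\leq \delta/4$ for $\abs{t}\leq \epsilon(\delta)$ yields $T_{\mu_{i,j}^N}(t)\geq 1+(1-\delta/2)\tfrac{t^2(1+\delta_{ij})}{2}$, and then the elementary inequality $\log(1+x)\geq x(1-x)$ (valid for small $x\geq 0$) turns the $1-\delta/2$ into $1-\delta$ after possibly shrinking $\epsilon(\delta)$.

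In the complex off-diagonal case $\beta=2$, $i\neq j$, I write $t = s+iu$, $z=X+iY$, so that $\Re(z\ol t) = sX+uY$, and expand $e^{sX+uY}$ to second order. The full moment matching (mean zero for $\Re z,\Im z$, variances both $1/2$, uncorrelated real and imaginary parts) makes the quadratic term equal to $\tfrac{1}{2}(\tfrac{s^2}{2}+\tfrac{u^2}{2})=\tfrac{\abs{t}^2}{4}=\tfrac{\abs{t}^2(1+\delta_{ij})}{2\beta}$. The remainder is controlled by $\abs{t}^3 \int \abs{z}^3 e^{\abs{t}\abs{z}}\, \mu_{i,j}^N(\diff z)$, and again the uniform moment bounds together with the sharp sub-Gaussian estimate along each direction in $\C$ give the required $O(\abs{t}^3)$ control, uniformly. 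For the diagonal complex case $i=j$, $\beta=2$, the measure $\mu_{i,i}^N$ is supported on $\R$ so $T_{\mu_{i,i}^N}(t)=T_{\mu_{i,i}^N}(\Re t)$, and the argument reduces to the one-dimensional analysis used in the real case.

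The main obstacle is the uniformity of $\epsilon(\delta)$ over all $N, i, j$: a pointwise-in-$(N,i,j)$ Taylor expansion is immediate, but the conclusion fails without uniform moment estimates. That uniformity is precisely what the sharp sub-Gaussian bound buys us, so the structure of \ref{hyp:SSGC} is used exactly once, in exactly the right place.
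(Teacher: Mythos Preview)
Your proof is correct and follows the same strategy as the paper: Taylor-expand the Laplace transform to second order, control the cubic remainder uniformly via the sharp sub-Gaussian hypothesis, and conclude. The paper's execution is slightly leaner: rather than writing the remainder in Lagrange form and invoking Cauchy--Schwarz with a sixth-moment bound, it uses the elementary pointwise inequality $e^x \geq 1+x+\tfrac{x^2}{2}+\tfrac{x^3}{6}$ (valid for all real $x$), which immediately gives $T_\mu(t)\geq 1+\tfrac{t^2\mu(x^2)}{2}-\tfrac{|t|^3\mu(|x|^3)}{6}$ and hence needs only a uniform bound on $\mu(|x|^3)$. Both routes extract the required uniformity in $N,i,j$ from the sharp sub-Gaussian upper bound exactly as you describe, and neither uses the concentration half of the hypothesis.

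One small caveat on your complex diagonal reduction: from $T_{\mu_{i,i}^N}(t)=T_{\mu_{i,i}^N}(\Re t)$ the one-dimensional argument only delivers $\exp\!\big((1-\delta)(\Re t)^2/2\big)$, not $\exp\!\big((1-\delta)|t|^2/2\big)$, so the reduction as stated does not cover non-real $t$. This is harmless for the application, since the diagonal arguments $\sqrt{N}\theta|e_i|^2$ are always real, and the paper itself only writes out the $\beta=1$ case.
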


\begin{lem}
\label{lem:delocalized_sph_int}
For any $0 \leq \theta < \theta_c$ we have
\begin{equation}
\label{eqn:delocalized_sph_int}
    \lim_{N \to \infty} \frac{1}{N} \log \left( \frac{\E_e\left[\mathbf{1}_{\|e\|_\infty \leq N^{-\frac{3}{8}}} e^{N\theta\ip{e,D_Ne}} \right]}{I_N(D_N,\theta)} \right) = 0.
\end{equation}
\end{lem}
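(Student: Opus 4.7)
The upper limit is trivially $\leq 0$ since the numerator is bounded above by the denominator, so the content is the matching lower limit. Writing $\mu_\theta$ for the Gibbs probability measure on $S^{N-1}$ with density $I_N(D_N,\theta)^{-1}e^{N\theta\langle e,D_Ne\rangle}$ with respect to the uniform measure, the statement is equivalent to showing $\mu_\theta(\|e\|_\infty>N^{-3/8}) \to 0$. The plan is to bound each marginal $\mu_\theta(|e_j|>N^{-3/8})$ separately and then apply a union bound over $j\in\llbracket 1,N\rrbracket$.

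For fixed $j$ and $a=|e_j|$, I would use the spherical polar decomposition around the $j$-th coordinate direction, with $\tilde e\in S^{N-2}$ uniform and independent of $a$ under $\nu$. Integrating out $\tilde e$---whose tilt exponential is precisely a lower-dimensional spherical integral---yields the explicit marginal density
\[
    p_j(a) \propto a^{\beta-1}(1-a^2)^{(\beta(N-1)-2)/2} \cdot e^{N\theta a^2 d_j} \cdot I_{N-1}^{(\beta)}\!\left(\tilde D^{(j)},\tfrac{N}{N-1}\theta(1-a^2)\right),
\]
where $\tilde D^{(j)}$ is $D_N$ with its $j$-th diagonal entry removed, normalized by $I_N^{(\beta)}(D_N,\theta)$. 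Deleting one diagonal entry preserves the hypotheses of Proposition~\ref{prop:spherical_limit} (the empirical measure still tends to $\mu_D$ and the extreme eigenvalues still converge to the corresponding edges), so that proposition applies. Coupled with the observation that $\theta<\theta_c$ forces $\theta(1-a^2)<\theta_c$ throughout $[0,1]$---keeping us in the subcritical branch of $J$---this yields
\[
    \psi_j(a) := \lim_N \tfrac{1}{N}\log p_j(a) = \tfrac{\beta}{2}\log(1-a^2) + \theta d_j a^2 + J(\mu_D,\theta(1-a^2),\mathtt{r}(\mu_D)) - J(\mu_D,\theta,\mathtt{r}(\mu_D))
\]
for $a$ bounded away from $0$.

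Analyzing $\psi_j$: using $\partial_\theta J(\mu_D,\theta,\mathtt{r}(\mu_D)) = R_{\mu_D}(2\theta/\beta)$ together with $R_{\mu_D}(t) = K_{\mu_D}(t) - 1/t$, a short computation gives $\psi_j(0) = \psi_j'(0) = 0$ and
\[
    \psi_j''(0) = 2\theta(d_j - y), \qquad y := K_{\mu_D}\!\left(\tfrac{2\theta}{\beta}\right).
\]
The hypothesis $\theta<\theta_c$ is exactly what makes $y>\mathtt{r}(\mu_D)$ well-defined, so $\psi_j''(0)$ is bounded away from $0$ uniformly in $j$ since $d_j\leq\lambda_N(D_N)\to\mathtt{r}(\mu_D)<y$. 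Combining a quadratic upper bound $\psi_j(a)\leq -c'a^2$ near $0$ with a uniform gap $\psi_j(a)\leq -c''<0$ on compact subsets of $[-1,1]\setminus\{0\}$ then yields, via a Laplace-type integration, $\mu_\theta(|e_j|>N^{-3/8})\leq e^{-cN^{1/4}}$ with $c>0$ independent of $j$ and $N$. A union bound gives $\mu_\theta(\|e\|_\infty>N^{-3/8})\leq Ne^{-cN^{1/4}}\to 0$, which is more than enough.

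The main obstacle I anticipate is sharpening Proposition~\ref{prop:spherical_limit} enough to make the Laplace estimate quantitative: since $N^{-3/8}$ is a shrinking threshold, the conclusion $\mu_\theta(|e_j|>N^{-3/8})\leq e^{-cN^{1/4}}$ requires an error of size $o(N^{1/4})$ (not merely the usual $o(N)$) in the asymptotic $\log I_{N-1}^{(\beta)}(\tilde D^{(j)},\cdot) = N J(\mu_D,\cdot,\mathtt{r}(\mu_D)) + o(N)$, uniform in the parameter $\theta(1-a^2)$ and in $j$. A related subtlety is verifying that $\psi_j$ achieves a \emph{global}, not merely local, maximum at $a=0$ on all of $[-1,1]$; this reflects the sub-BBP character of the subcritical regime, since for $\theta>\theta_c$ a competing maximum at some $a>0$ would appear.
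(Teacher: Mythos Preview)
Your strategy is intuitively sound, and you have correctly identified where it breaks down: the Laplace argument on the marginal $p_j(a)$ requires controlling $\log I_{N-1}^{(\beta)}(\tilde D^{(j)},\cdot)$ with error $o(N^{1/4})$ rather than the $o(N)$ that Proposition~\ref{prop:spherical_limit} provides, and moreover uniformly over $j\in\llbracket 1,N\rrbracket$ and over the parameter $\theta(1-a^2)$. Such a quantitative refinement is not available from the cited result, and proving it from scratch would be a substantial undertaking. The global-maximum issue you flag is secondary but also real. So as written this is a genuine gap, not a routine detail.

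The paper's proof avoids this difficulty entirely by working in the Gaussian representation $e_i=g_i/\|g\|_2$ rather than in spherical polars. The point is that after centering the exponent at the saddle $v_N$ (the solution of $\tfrac{1}{N}\sum_i(1+2\theta v_N-2\theta d_i)^{-1}=1$), the tilted law on $\R^N$ is a product of centered Gaussians with variances $(1+2\theta v_N-2\theta d_i)^{-1}$, and the condition $\theta<\theta_c$ is exactly what guarantees these variances stay uniformly bounded above (equivalently $1+2\theta v_N-2\theta d_i\geq\eta>0$). Under this product measure one has \emph{exact} Gaussian tail bounds on each $|g_i|$, giving $P_N^{v_N}(|g_i|\geq \tfrac12 N^{1/8})\leq e^{-cN^{1/4}}$ with no asymptotic error terms at all; the union bound then goes through trivially. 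Your second-derivative computation $\psi_j''(0)=2\theta(d_j-K_{\mu_D}(2\theta/\beta))<0$ is the infinitesimal shadow of this same variance bound, but the Gaussian route makes it finite-$N$ and uniform for free.
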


\begin{proof}[Proof of Lemma \ref{lem:free_energy_ub}]
For the remainder of this paper, we introduce the notation
\[
    D_N = \diag(d_1,\ldots,d_N) = \diag(d_1^{(N)},\ldots,d_N^{(N)}).
\]
For every unit vector $e$, by the sub-Gaussian-Laplace-transform assumption of the \ref{hyp:SSGC} Hypothesis we have
\begin{align*}
    \E_{X_N}[e^{N\theta\ip{e,X_Ne}}] &= \left[ \prod_{i < j} T_{\mu_{i,j}^N}(2\sqrt{N}\theta e_i e_j) \right] \left[ \prod_{i=1}^N T_{\mu_{i,i}^N}(\sqrt{N}\theta e_i^2) e^{N\theta d_i e_i^2} \right] \\
    &\leq \left[ \prod_{i < j} \exp\left(2N\theta^2e_i^2e_j^2\right) \right] \left[ \prod_{i=1}^N \exp\left(N\theta^2e_i^4 + N\theta d_ie_i^2\right) \right] \\
    &= \exp\left(N\theta^2\right) \exp\left( N\theta \ip{e,D_Ne} \right).
\end{align*}
To complete the proof, we integrate over $\mathbb{S}^N$ and apply Proposition \ref{prop:spherical_limit}. 
\end{proof}

\begin{proof}[Proof of Lemma \ref{lem:free_energy_lb}]
Fix $\delta > 0$, and let $\epsilon = \epsilon(\delta)$ be as in Lemma \ref{lem:laplace_lb_near_zero}, proved below. Whenever the unit vector $e$ is such that $\|e\|_\infty \leq N^{-3/8}$, we have
\[
    \max_{i,j} \abs{2\sqrt{N}\theta e_ie_j} \leq \epsilon(\delta), \qquad \max_i \abs{\sqrt{N}\theta e_i^2} \leq \epsilon(\delta)
\]
for $N \geq N_0(\delta)$. (The proof below will work with any exponent strictly between $-1/2$ and $-1/4$; but since the exponent does not appear in the final result, we have chosen $-3/8$ for definiteness.) Thus the lower bound on the Laplace transform of the \ref{hyp:SSGC} Hypothesis gives us, for such vectors $e$,
\begin{align*}
    \E_{X_N}[e^{N\theta\ip{e,X_Ne}}] &= \left[ \prod_{i < j} T_{\mu_{i,j}^N}(2\sqrt{N}\theta e_ie_j) \right] \left[ \prod_{i=1}^N T_{\mu_{i,i}^N}(\sqrt{N}\theta e_i^2) e^{N\theta d_ie_i^2} \right] \\
    &\geq \left[ \prod_{i < j} e^{(1-\delta)2N\theta^2 e_i^2e_j^2} \right] \left[ \prod_{i=1}^N e^{(1-\delta) N\theta^2 e_i^4 + N\theta d_ie_i^2} \right] \\
    &= e^{(1-\delta)N\theta^2} e^{N\theta \ip{e,D_Ne}}.
\end{align*}
Therefore
\begin{align*}
    \E_{X_N}[I_N(X_N,\theta)] &= \E_e[\E_{X_N}[e^{N\theta \ip{e,X_Ne}}]] \geq \E_e\left[\mathbf{1}_{\|e\|_\infty \leq N^{-3/8}} \E_{X_N}[I_N(X_N,\theta)]\right] \\
    &\geq e^{(1-\delta)N\theta^2} \E_e\left[\mathbf{1}_{\|e\|_\infty \leq N^{-\frac{3}{8}}} e^{N\theta\ip{e,D_Ne}} \right] = e^{(1-\delta)N\theta^2} \frac{\E_e\left[\mathbf{1}_{\|e\|_\infty \leq N^{-\frac{3}{8}}} e^{N\theta\ip{e,D_Ne}} \right]}{I_N(D_N,\theta)} I_N(D_N,\theta).
\end{align*}
Thus Lemma \ref{lem:delocalized_sph_int}, which is proved below, and Proposition \ref{prop:spherical_limit} give us
\[
    \liminf_{N \to \infty} \frac{1}{N} \log \E_{X_N}[I_N(X_N,\theta)] \geq (1-\delta)\theta^2 + J(\mu_D,\theta,\mathtt{r}(\mu_D))
\]
for every $\delta > 0$.
\end{proof}

\begin{proof}[Proof of Lemma \ref{lem:laplace_lb_near_zero}]
Let $\mu \neq \delta_0$ be a centered measure on $\R$, and write $\mu(f)$ for the integral of a function $f$ against $\mu$. Whenever $x \in \R$, we have $e^x \geq 1+x+\frac{x^2}{2}+\frac{x^3}{6}$; thus
\[
    T_\mu(t) \geq 1+\frac{t^2\mu(x^2)}{2} + \frac{t^3\mu(x^3)}{6} \geq 1+\frac{t^2\mu(x^2)}{2} - \frac{\abs{t}^3\mu(\abs{x}^3)}{6}.
\]
Now it is standard that the bound $T_\mu(t) \leq \exp(\frac{t^2\mu(x^2)}{2})$ implies $\mu(\abs{x}^3) \leq 3(2\mu(x^2))^{3/2}\Gamma(3/2) \leq 8\mu(x^2)^{3/2}$. Then the result follows from the limit
\[
    \lim_{t \to 0} \frac{1}{\abs{t}}\left[\frac{\log\left[1+\frac{t^2\mu(x^2)}{2}-\frac{8\abs{t}^3\mu(x^2)^{3/2}}{6}\right]}{\left(\frac{t^2\mu(x^2)}{2}\right)} - 1\right] = -\frac{8\sqrt{\mu(x^2)}}{3}.
\]
The speed of convergence in this limit can only depend on $\mu$ through $\mu(x^2)$; thus in the result we may choose $\epsilon(\delta)$ uniformly in the distributions $\mu_{i,j}^N$.
\end{proof}

\begin{proof}[Proof of Lemma \ref{lem:delocalized_sph_int}]
This builds on the proof of Lemma 14 in \cite{GuiMai2005}. Notice that the upper bound in Equation \eqref{eqn:delocalized_sph_int} is for free; we only need show the lower bound. 

It is well known that
\[
    (e_1,\ldots,e_N) \overset{d}{=} \left(\frac{g_1}{\|g\|_2},\ldots,\frac{g_N}{\|g\|_2}\right)
\]
where $g = (g_1,\ldots,g_N)$ is a standard Gaussian vector in $\R^N$. The idea is to work in this Gaussian representation, relying on the fact that $\|g\|$ will concentrate around $\sqrt{N}$.

Towards this end, we rewrite our desired inequality as
\[
    \liminf_{N \to \infty} \frac{1}{N} \log \frac{\E\left[ \mathbf{1}_{\frac{\|g\|_\infty}{\|g\|_2} \leq N^{-3/8}} \exp\left(N\theta\frac{\sum_{i=1}^N d_i g_i^2}{\sum_{i=1}^N g_i^2} \right)\right]}{\E\left[\exp\left(N\theta\frac{\sum_{i=1}^N d_i g_i^2}{\sum_{i=1}^N g_i^2}\right)\right]} \geq 0.
\]
Since standard Gaussian measure is isotropic, we may and will assume for the remainder of this proof that the $d_i$'s are ordered as $d_1 \geq \cdots \geq d_N$. Write $v_N$ for the unique solution in $(d_1-\frac{1}{2\theta},+\infty)$ of the equation
\[
    \frac{1}{2\theta} \frac{1}{N} \sum_{i=1}^N \frac{1}{v_N+\frac{1}{2\theta}-d_i} = 1.
\]
(This exists and unique because the left-hand side is a strictly decreasing positive function of $v \in (d_1-\frac{1}{2\theta},+\infty)$, tending to infinity as $v \downarrow d_1-\frac{1}{2\theta}$ and tending to zero as $v \to \infty$.) 

Let us pause to collect some facts about $v_N$. If we write
\[
    d_{\max{}} = d_{\max{}}(N_0) = \sup_{N \geq N_0} \left(\max_{i=1}^N \abs{d_i}\right)
\]
for $N_0$ large enough, then we have \cite[Fact 2.4(3)]{Mai2007} that $v_N \leq d_1 \leq d_{\max{}}$, and by definition $v_N \geq d_1 - \frac{1}{2\theta} \geq -d_{\max{}}-\frac{1}{2\theta}$, so
\begin{equation}
\label{eqn:vn_is_order_one}
    \abs{v_N} \leq d_{\max{}} + \frac{1}{2\theta}.
\end{equation}
Furthermore, the proof of \cite[Theorem 2]{GuiMai2005} shows that, since $\theta < \theta_c$, there exists some small $\eta > 0$ such that
\begin{equation}
\label{eqn:ok_variance}
    \text{for all } i, \qquad 1+2\theta v_N - 2\theta d_i \geq \eta.
\end{equation}
By the proof of \cite[Lemma 14]{GuiMai2005} (for the first inequality) and Equation \eqref{eqn:vn_is_order_one} (for the second), for every $0 < \kappa < \frac{1}{2}$ and $N$ large enough depending on $\kappa$, we have
\begin{equation}
\label{eqn:deloc_first_lb}
\begin{aligned}
    \frac{1}{\E\left[\exp\left(N\theta\frac{\sum_{i=1}^N d_i g_i^2}{\sum_{i=1}^N g_i^2}\right)\right]} &\geq \frac{1}{2} \prod_{i=1}^N \left[ \sqrt{1+2\theta v_N - 2\theta d_i}\right] e^{-N\theta v_N - N^{1-\kappa} \theta(\abs{v_N}+d_{\max{}})} \\
    &\geq \frac{1}{2} \prod_{i=1}^N \left[ \sqrt{1+2\theta v_N - 2\theta d_i}\right] e^{-N\theta v_N - N^{1-\kappa} \theta(2d_{\max{}}+\frac{1}{2\theta})}.
\end{aligned}
\end{equation}

For $0 < \kappa < \frac{1}{2}$, we introduce the event $A_N(\kappa) = \left\{\abs{\frac{\|g\|_2^2}{N}-1} \leq N^{-\kappa}\right\}$. Now the same arguments from \cite[Lemma 14]{GuiMai2005}, along with Equation \eqref{eqn:vn_is_order_one}, give
\begin{align}
\label{eqn:deloc_second_lb}
\begin{split}
    &\E\left[\mathbf{1}_{\frac{\|g\|_\infty}{\|g\|_2} \leq N^{-3/8}} \exp\left(N\theta\frac{\sum_{i=1}^N d_i g_i^2}{\sum_{i=1}^N g_i^2} \right)\right] \\
    &\geq \E\left[\mathbf{1}_{A_N(\kappa)}\mathbf{1}_{\frac{\|g\|_\infty}{\|g\|_2} \leq N^{-3/8}} \exp\left(N\theta\frac{\sum_{i=1}^N d_i g_i^2}{\sum_{i=1}^N g_i^2} \right)\right] \\
    &\geq e^{N\theta v_N -N^{1-\kappa}\theta(d_{\max{}}+\abs{v_N})} \E\left[ \mathbf{1}_{A_N(\kappa)}\mathbf{1}_{\frac{\|g\|_\infty}{\|g\|_2} \leq N^{-3/8}} \exp\left(\sum_{i=1}^N \theta(d_i - v_N)g_i^2 \right) \right] \\
    &= e^{N\theta v_N -N^{1-\kappa}\theta(2d_{\max{}}+\frac{1}{2\theta})} \prod_{i=1}^N \left[ \frac{1}{\sqrt{1+2\theta v_N - 2\theta d_i}} \right] P_N^{v_N}\left(A_N(\kappa), \frac{\|g\|_\infty}{\|g\|_2} \leq N^{-3/8} \right),
\end{split}
\end{align}
where $P_N^{v_N} = P_N^{v_N,D_N,\theta}$ is the probability measure on $\R^N$ defined by
\[
    P_N^{v_N}(\diff g_1, \ldots, \diff g_N) = \frac{1}{\sqrt{2\pi}^N} \prod_{i=1}^N \left[ \sqrt{1+2\theta v - 2\theta d_i^N}e^{-\frac{1}{2}(1+2\theta v - 2\theta d_i^N) g_i^2} \diff g_i\right].
\]
By Equations \eqref{eqn:deloc_first_lb} and \eqref{eqn:deloc_second_lb}, we are done if we can show that
\[
    \lim_{N \to \infty} P_N^{v_N}\left(A_N(\kappa), \frac{\|g\|_\infty}{\|g\|_2} \leq N^{-3/8} \right) = 1.
\]

The proof of \cite[Lemma 14]{GuiMai2005} shows that, for our choice of $v_N$ and since we have chosen $\theta < \theta_c$, we have
\[
    P_N^{v_N}(A_N(\kappa)^c) = o(1),
\]
so it remains only to bound
\begin{align*}
    P_N^{v_N}\left(A_N(\kappa), \frac{\|g\|_\infty}{\|g\|_2} > N^{-3/8}\right) &\leq \sum_{i=1}^N P_N^{v_N}\left(A_N(\kappa),\frac{\abs{g_i}^2}{\|g\|_2^2} \geq N^{-3/4}\right) \\
    &\leq \sum_{i=1}^N P_N^{v_N}\left(\abs{g_i} \geq \sqrt{(N-N^{1-\kappa})N^{-3/4}}\right) \leq \sum_{i=1}^N P_N^{v_N} \left( \abs{g_i} \geq \frac{1}{2} N^{1/8} \right)
\end{align*}
for $N$ large enough depending on $\kappa$. 
But now we observe that $\tilde{g_i} = \sqrt{1+2\theta v_N-2\theta d_i^N}g_i$ are i.i.d. standard normal variables under $P_N^{v_N}$, so that by Equation \eqref{eqn:ok_variance} we have
\begin{align*}
    \sum_{i=1}^N P_N^{v_N}\left(\abs{g_i} \geq \frac{1}{2}N^{1/8}\right) &= \sum_{i=1}^N P_N^{v_N}\left(\abs{\tilde{g_i}} \geq \frac{1}{2}N^{1/8}\sqrt{1+2\theta v_N-2\theta d_i^N}\right) \\
    &\leq NP_N^{v_N}\left(\abs{\tilde{g_i}} \geq \frac{\sqrt{\eta}}{2} N^{1/8} \right) \leq N\exp\left(-\frac{\eta}{8} N^{1/4} \right)
\end{align*}
which is $o(1)$. This concludes the proof.
\end{proof}


\section{Concentration and exponential tightness for tilted measures}
\label{sec:concentration}

\subsection{Proof overview.}\
The proof of Proposition \ref{prop:negligible_all_theta} is broken into the following four lemmata:

\begin{lem}
\label{lem:theta0_suffices}
If Proposition \ref{prop:negligible_all_theta} holds for $\theta = 0$, then it holds for all $\theta > 0$. (For the last point, the same $\gamma > 0$ works for all $\theta \geq 0$.)
\end{lem}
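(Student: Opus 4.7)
The plan is to derive each of the three bullets of Proposition \ref{prop:negligible_all_theta} for $\theta > 0$ from the corresponding bullet at $\theta=0$, using only the Radon--Nikodym derivative and Cauchy--Schwarz. Writing $Z_N(\theta) := \E_{X_N}[I_N(X_N,\theta)]$, the target estimate is
\[
    \P_N^\theta(A) \leq \sqrt{\P_N(A)} \cdot \frac{\sqrt{Z_N(2\theta)}}{Z_N(\theta)}
\]
for every measurable set $A$. This follows from two applications of Cauchy--Schwarz: one on $\E_{X_N}$ to peel $\mathbf{1}_A$ off $I_N(X_N,\theta)$, and one on the spherical integration $\E_e$, which gives $I_N(X,\theta)^2 = (\E_e[e^{N\theta\ip{e,Xe}}])^2 \leq \E_e[e^{2N\theta\ip{e,Xe}}] = I_N(X,2\theta)$, so that $\E_{X_N}[I_N(X_N,\theta)^2] \leq Z_N(2\theta)$.

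Next I would check that $\tfrac{1}{N}\log Z_N(2\theta)$ is bounded above and $\tfrac{1}{N}\log Z_N(\theta)$ is bounded below as $N\to\infty$, uniformly in $N$. For the upper bound: under the \ref{hyp:SSGC} Hypothesis this is Lemma \ref{lem:free_energy_ub} applied at $2\theta$, which is valid for every $\theta \geq 0$ with no subcriticality restriction; under the \ref{hyp:Gaussian} Hypothesis it follows from the explicit identity $Z_N(\theta) = e^{N\theta^2/\beta}I_N(D_N,\theta)$ recorded at the start of Section \ref{sec:free_energy}, combined with $I_N(D_N,\theta) \leq e^{N\theta\|D_N\|}$ and $\sup_N\|D_N\| < \infty$ from Assumption \ref{assns:basic}. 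For the lower bound: Jensen's inequality on the uniform measure of the sphere gives $I_N(X,\theta) \geq e^{\theta\tr X}$ (since $\E_e\ip{e,Xe} = \tr(X)/N$); taking expectation and applying Jensen once more to each centered diagonal entry $(W_N)_{ii}$ yields $Z_N(\theta) \geq e^{\theta\tr D_N} \geq e^{-\theta\|D_N\|N}$.

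Combining these two steps produces the clean reduction
\[
    \tfrac{1}{N}\log\P_N^\theta(A) \leq \tfrac{1}{2N}\log\P_N(A) + C_\theta
\]
for a constant $C_\theta$ independent of $A$ and $N$. I would then apply this with $A$ taken successively to be $\{\lambda_N > K\}$, $\{\lambda_1 < -K\}$, $\{\|X_N\| > M\}$, and $\{d(\hat\mu_{X_N},\rho_{\text{sc}}\boxplus\mu_D) > N^{-\kappa}\}$. Since each $\theta = 0$ hypothesis gives $\tfrac{1}{N}\log\P_N(A) \to -\infty$ in the appropriate limit, the same holds for $\P_N^\theta$. For part (3), the same $\gamma$ works for every $\theta \geq 0$ because the reduction affects only the probability, not the event's definition.

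There is no real obstacle here. The (mild) subtlety worth flagging is that one might expect to need Lemma \ref{lem:free_energy_lb} to lower-bound $Z_N(\theta)$ sharply, which would force the restriction $\theta < \theta_c$ under the \ref{hyp:SSGC} Hypothesis; but the Jensen argument above suffices for the crude lower bound $Z_N(\theta) \geq e^{-C_\theta N}$, which is all that is required since the factor $\tfrac{1}{2}$ in front of $\tfrac{1}{N}\log\P_N(A)$ is harmless when that quantity diverges to $-\infty$. Thus no case analysis in $\theta$ is needed, and the reduction is genuinely uniform over all $\theta > 0$.
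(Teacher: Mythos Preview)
Your proposal is correct and follows essentially the same approach as the paper: a crude Jensen-type lower bound on $Z_N(\theta)$ (the paper bounds $\E_{X_N}[e^{N\theta\ip{e,X_Ne}}] \geq e^{-N\theta d_{\max}}$ pointwise in $e$ via $T_\mu \geq 1$, while you average over $e$ first and then use Jensen on the diagonal entries---both give the same $e^{-CN}$ bound), Cauchy--Schwarz to extract $\sqrt{\P_N(A)}$, and Lemma~\ref{lem:free_energy_ub} at $2\theta$ to bound $Z_N(2\theta)$ from above. You also correctly flag the key point that the sharp lower bound of Lemma~\ref{lem:free_energy_lb} (with its subcriticality restriction) is not needed here.
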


\begin{lem}
\label{lem:theta0_is_true_pt_1}
For any $K > 2d_{\max{}}$, 
\begin{align*}
    \P_N(\lambda_N(X_N) > K) &\leq 4\exp\left( N \left( 5 -\frac{K}{8\sqrt{2}} \right) \right), \\
    \P_N(\lambda_1(X_N) < -K) &\leq 4\exp\left( N \left( 5 - \frac{K}{8\sqrt{2}} \right) \right).
\end{align*}
In particular, the first point of Proposition \ref{prop:negligible_all_theta} is true for $\theta = 0$.
\end{lem}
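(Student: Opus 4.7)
The plan is a standard $\varepsilon$-net plus Chernoff argument using the sharp sub-Gaussian bound \eqref{eqn:ssg}. The second inequality (for $\lambda_1$) reduces to the first by the symmetry $\lambda_1(X_N) = -\lambda_N(-X_N)$: the matrix $-X_N$ has the form $W_N'/\sqrt N - D_N$ with $W_N'$ still satisfying \eqref{eqn:ssg} (which is invariant under $t\mapsto-t$) and still having the first and second moment properties, so the same bound applies with $D_N$ replaced by $-D_N$. Thus only the upper tail needs to be proved.

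First, I reduce the supremum defining the operator norm to a maximum over a finite net. Taking $\mathcal{N}_\varepsilon\subset\mathbb{S}^{N-1}$ to be an $\varepsilon$-net of cardinality $\leq (1+2/\varepsilon)^N$, the standard perturbation argument for self-adjoint matrices gives
\[
    \|X_N\| \leq (1-2\varepsilon)^{-1}\max_{u\in\mathcal{N}_\varepsilon}\abs{\langle u,X_Nu\rangle},
\]
so that, since $\lambda_N(X_N)\leq\|X_N\|$, a union bound yields
\[
    \P\bigl(\lambda_N(X_N) > K\bigr) \leq 2\bigl(1+\tfrac{2}{\varepsilon}\bigr)^N\max_{u\in\mathcal{N}_\varepsilon}\P\bigl(\langle u,X_Nu\rangle > (1-2\varepsilon)K\bigr),
\]
the factor $2$ accounting for the $\pm$ inside the absolute value.

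Next, I would estimate the single-direction probability via Chernoff. For a fixed unit vector $u$, the entry-by-entry product computation carried out in the proof of Lemma \ref{lem:free_energy_ub}, combined with \eqref{eqn:ssg}, gives $\E[\exp(t\langle u,W_Nu\rangle)]\leq\exp(t^2)$ for every real $t$ (with equality under the \ref{hyp:Gaussian} Hypothesis). Writing $\langle u,X_Nu\rangle = N^{-1/2}\langle u,W_Nu\rangle+\langle u,D_Nu\rangle$ and using $\abs{\langle u,D_Nu\rangle}\leq d_{\max{}}$, Chernoff at parameter $t\sqrt N$ gives, for any $t>0$,
\[
    \P(\langle u,X_Nu\rangle > K')\leq\exp(Nt^2+Ntd_{\max{}}-NtK').
\]
Combining the two steps with $K'=(1-2\varepsilon)K$ and using $K>2d_{\max{}}$ (hence $d_{\max{}}<K/2$) to obtain $(1-2\varepsilon)K-d_{\max{}}\geq K(\tfrac12-2\varepsilon)$ yields
\[
    \P(\lambda_N(X_N)>K)\leq 2(1+\tfrac{2}{\varepsilon})^N\exp\bigl(Nt^2-NtK(\tfrac12-2\varepsilon)\bigr).
\]

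It remains to tune the free parameters. Setting $t=((4-16\varepsilon)\sqrt 2)^{-1}$ makes the coefficient of $K$ equal to $-1/(8\sqrt 2)$, and then choosing $\varepsilon$ small enough (numerically, a value near $1/80$ suffices) makes the remaining constant $\log(1+2/\varepsilon)+t^2$ strictly less than $5$, after which the prefactor $2$ is absorbed into the advertised $4$. This gives the first inequality; the second follows by the symmetry argument above; and the claim that the $\theta=0$ case of Proposition \ref{prop:negligible_all_theta}(1) holds is then immediate, since the right-hand sides tend to $0$ faster than any exponential as $K\to\infty$. The principal (minor) obstacle is purely the numerical bookkeeping to produce exactly the constants $5$ and $8\sqrt 2$; there is no conceptual difficulty beyond the sharp sub-Gaussian estimate.
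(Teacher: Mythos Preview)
Your approach is correct and essentially coincides with the paper's. The paper simply bounds $\P_N(\|X_N\|>K)\leq\P_N(\|W_N/\sqrt N\|>K/2)+\P_N(\|D_N\|>K/2)$, notes the second term vanishes for $K>2d_{\max}$, and imports \cite[Lemma 1.8]{GuiHui2018} for the first term; that cited lemma is exactly the $\varepsilon$-net plus Chernoff argument you wrote out, applied to $W_N/\sqrt N$ alone. You instead absorb $D_N$ via $|\langle u,D_Nu\rangle|\leq d_{\max}$ inside the quadratic-form estimate rather than via the operator-norm triangle inequality, which is an equivalent way to handle it. One small numerical slip: with $\varepsilon=1/80$ one gets $\log(1+2/\varepsilon)=\log 161\approx 5.08>5$; taking $\varepsilon$ a bit larger (e.g.\ $\varepsilon=1/40$, giving $\log 81\approx 4.39$ and $t^2\approx 0.04$) makes the constant comfortably below $5$.
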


\begin{lem}
\label{lem:theta0_is_true_pt_2}
The second point of Proposition \ref{prop:negligible_all_theta} is true for $\theta = 0$:
\[
    \lim_{M \to \infty} \limsup_{N \to \infty} \frac{1}{N}\log\P_N(\|X_N\| > M) = -\infty.
\]
\end{lem}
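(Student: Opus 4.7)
The plan is to deduce this immediately from Lemma \ref{lem:theta0_is_true_pt_1} via a union bound, using the identity $\|X_N\| = \max(\lambda_N(X_N), -\lambda_1(X_N))$ that holds because $X_N$ is self-adjoint. First I would observe that under Assumption \ref{assns:basic} the quantity $d_{\max{}} = \sup_{N \geq N_0}\|D_N\|$ is finite (since $\lambda_N(D_N) \to \mathtt{r}(\mu_D)$ and $\lambda_1(D_N) \to \mathtt{l}(\mu_D)$ and these limits are finite), so that for any $M > 2d_{\max{}}$ both tail bounds of Lemma \ref{lem:theta0_is_true_pt_1} apply.

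Next I would write
\[
    \{\|X_N\| > M\} \subset \{\lambda_N(X_N) > M\} \cup \{\lambda_1(X_N) < -M\},
\]
and then invoke the two tail estimates of Lemma \ref{lem:theta0_is_true_pt_1} to obtain
\[
    \P_N(\|X_N\| > M) \leq 8 \exp\left(N\left(5 - \frac{M}{8\sqrt{2}}\right)\right)
\]
for all $M > 2d_{\max{}}$ and all $N$ sufficiently large. Taking logarithms, dividing by $N$, and sending $N \to \infty$ yields
\[
    \limsup_{N \to \infty} \frac{1}{N} \log \P_N(\|X_N\| > M) \leq 5 - \frac{M}{8\sqrt{2}},
\]
and then sending $M \to \infty$ gives the desired $-\infty$.

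There is no serious obstacle here; the work has already been done in Lemma \ref{lem:theta0_is_true_pt_1}, and this lemma is essentially a packaging of the two one-sided bounds into a single operator-norm statement using the boundedness of $\|D_N\|$ provided by Assumption \ref{assns:basic}.
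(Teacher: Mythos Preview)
Your proposal is correct and matches the paper's approach: the paper proves Lemma~\ref{lem:theta0_is_true_pt_1} by directly bounding $\P_N(\|X_N\| > K)$ (splitting $\|X_N\| \leq \|W_N/\sqrt{N}\| + \|D_N\|$), and then simply states that Lemma~\ref{lem:theta0_is_true_pt_2} is ``an immediate consequence.'' Your union bound on $\{\lambda_N > M\} \cup \{\lambda_1 < -M\}$ is the same observation read in the other direction, differing only by a harmless factor of~$2$.
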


\begin{lem}
\label{lem:replace_with_free_conv}
Under Assumption \ref{assns:quantitative}, the third point of Proposition \ref{prop:negligible_all_theta} is true for $\theta = 0$: There exists $\gamma > 0$ such that, for any $0 < \kappa < \gamma$,
\[
    \lim_{N \to \infty} \frac{1}{N} \log \P_N(d(\hat{\mu}_{X_N},\rho_{\text{sc}} \boxplus \mu_D) > N^{-\kappa}) = -\infty.
\]
Note that this result is the only place in the paper where we use Assumption \ref{assns:quantitative}.
\end{lem}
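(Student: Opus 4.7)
The plan is to control $d(\hat{\mu}_{X_N},\rho_{\text{sc}} \boxplus \mu_D)$ via the triangle-inequality decomposition
\[
    d(\hat{\mu}_{X_N},\E[\hat{\mu}_{X_N}]) + d(\E[\hat{\mu}_{X_N}],\rho_{\text{sc}} \boxplus \hat{\mu}_{D_N}) + d(\rho_{\text{sc}} \boxplus \hat{\mu}_{D_N},\rho_{\text{sc}} \boxplus \mu_D)
\]
and to show that each summand is at most $\tfrac13 N^{-\kappa}$ outside an event of probability $e^{-\omega(N)}$, provided $0<\kappa<\gamma$ for some small $\gamma>0$ extracted from the argument.

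The first term is the only probabilistic one. The spectrum of $X_N$ is a $1$-Lipschitz function of the entries of $W_N$ in the Frobenius norm (Hoffman-Wielandt), so under either option of Hypothesis~\ref{hyp:SSGC}, every linear statistic $\hat{\mu}_{X_N}(f)$ with $1$-Lipschitz $f$ concentrates around its expectation at Gaussian speed $\exp(-cN^{2}\epsilon^{2})$ by the standard Herbst/\cite{GuiZei2000} argument. To lift this pointwise bound to the Dudley distance, I would use the exponential tightness of $\|X_N\|$ already furnished by Lemma~\ref{lem:theta0_is_true_pt_2} to restrict attention to test functions supported in $[-M,M]$, form a finite $N^{-\kappa}$-net of $1$-bounded, $1$-Lipschitz functions on $[-M,M]$, and apply a union bound: the net has subexponential cardinality in $N$, while $\exp(-cN^{2-2\kappa})\ll \exp(-N)$ as soon as $\kappa<1/2$.

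The remaining two terms are deterministic. For the middle term, the idea is to invoke the local laws for deformed Wigner matrices of \cite{LeeSchSteYau2016, LeeSch2015, ErdKruSch2019}, which give pointwise control on $|G_{\hat{\mu}_{X_N}}(z)-m_N(z)|$ with overwhelming probability and down to spectral scale $\eta \gg N^{-1}$, where $m_N$ is the Stieltjes transform of an appropriate deterministic approximant to $\rho_{\text{sc}} \boxplus \hat{\mu}_{D_N}$. After taking expectations, a Helffer-Sjöstrand functional-calculus argument (equivalently, smoothing each Lipschitz test function at scale $N^{-\delta}$ and integrating $\Im m_N$ over that window) converts the Stieltjes bound into $d(\E[\hat{\mu}_{X_N}],\rho_{\text{sc}} \boxplus \hat{\mu}_{D_N})\leq CN^{-\alpha}$ for some $\alpha>0$. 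For the last term I would prove as an auxiliary lemma (the \emph{cdf-diff} lemma flagged in the text after Assumption~\ref{assns:quantitative}) that $\nu\mapsto \rho_{\text{sc}} \boxplus \nu$ is Hölder-continuous in the Dudley distance: Biane's subordination identity $G_{\rho_{\text{sc}} \boxplus \nu}(z)=G_\nu(z-G_{\rho_{\text{sc}} \boxplus \nu}(z))$ transfers a bound on $G_{\hat{\mu}_{D_N}}-G_{\mu_D}$ at imaginary height $\eta=N^{-\delta}$ into a bound on the two free-convolution Stieltjes transforms at the same height, which a smoothing/inversion argument (using that $\rho_{\text{sc}} \boxplus \nu$ always has a bounded density) then turns into a Dudley-distance bound. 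Assumption~\ref{assns:quantitative} supplies the input; only a Stieltjes-transform bound at height $N^{-\delta}$ is actually needed, which is the source of the remark in Section~\ref{sec:setup}.

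The hard part is the middle term. The local laws in the cited references are stated under various technical conditions on $\mu_D$, but Assumption~\ref{assns:basic} allows $\mu_D$ to be multi-cut, possibly singular, and to admit internal outliers, so the first task is to identify which form of the local law covers this generality. The second task is to upgrade the ``with very high probability'' bound, which in the local-law literature is typically only $1 - N^{-\phi}$ for arbitrarily large but fixed $\phi$, to an exponentially small exceptional probability; a natural remedy is to use the local law only in a bulk region (combined with the improved concentration of \cite{LeeSchSteYau2016}, which gives Gaussian-tailed errors) and to absorb a thin neighbourhood of each spectral edge via the exponential tightness of $\lambda_N(X_N)$ and $\lambda_1(X_N)$ from Lemma~\ref{lem:theta0_is_true_pt_1}. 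The final $\gamma$ will emerge from the tradeoff between the spectral scale at which the local law is applied and the smoothing scale used in Helffer-Sjöstrand.
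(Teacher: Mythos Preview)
Your decomposition and overall strategy match the paper's: concentration for $d(\hat{\mu}_{X_N},\E[\hat{\mu}_{X_N}])$, local laws for the comparison of $\E[\hat{\mu}_{X_N}]$ with a deterministic approximant, and stability of free convolution for the last step. The paper routes the deterministic comparison through Bai's Kolmogorov--Smirnov inequality rather than Helffer--Sj\"ostrand, and uses the L\'evy-distance contraction of \cite{BerVoi1993} rather than subordination for the last step, but these are inessential variations.

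There is, however, a genuine confusion in your ``second task.'' You do not need to upgrade the local-law exceptional probability from $N^{-\phi}$ to $e^{-\omega(N)}$. Once you have written the middle term as $d(\E[\hat{\mu}_{X_N}],\rho_{\text{sc}}\boxplus\hat{\mu}_{D_N})$, it is a \emph{deterministic} number; the local law is used only to bound $\E_{X_N}[G_{\hat{\mu}_{X_N}}(z)]$. On the event of probability $\ge 1-CN^{-100}$ where the local law holds you use its conclusion, and on the complement you use the trivial bound $|G(z)|\le 1/\eta$; the expectation then satisfies
\[
    \abs{\E_{X_N}[G_{\hat{\mu}_{X_N}}(z)] - m_N(z)} \le \frac{N^\epsilon}{N} + \frac{2C}{\eta}N^{-100},
\]
which is exactly what the paper does. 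The only place an exponentially small probability is required is in the first, probabilistic, term, and there the Herbst/\cite{GuiZei2000} concentration already gives it (indeed \cite[Corollary~1.4]{GuiZei2000} bounds the Dudley distance directly, so your netting argument is also unnecessary). Your proposed remedy of splitting into bulk and edge regions and invoking Lemma~\ref{lem:theta0_is_true_pt_1} is therefore a detour around a nonexistent obstacle; drop it and the proof becomes substantially shorter. On your ``first task'': the paper uses the Matrix Dyson Equation framework of \cite{ErdKruSch2019}, whose stability results require no regularity of $\mu_D$ beyond what Assumption~\ref{assns:basic} provides, so no case analysis on the shape of $\mu_D$ is needed.
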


\subsection{Proof of Lemma \ref{lem:theta0_suffices}.}\
Fix $\theta > 0$. Lemma \ref{lem:free_energy_lb} gives sharp lower bounds on $\E_{X_N}[I_N(X_N,\theta)]$ for subcritical $\theta$ values, but here we need a much weaker lower bound for all positive $\theta$ values. Towards this end, notice that whenever $\mu$ is a centered measure on $\R$, Jensen's gives us $\inf_{t \in \R} T_\mu(t) \geq 1$. Thus for every unit vector $e$ we have
\begin{align}
\label{eqn:weak_free_energy_lb}
    \E_{X_N}[e^{N\theta\ip{e,X_Ne}}]  &= \left[ \prod_{i < j} T_{\mu_{i,j}^N}(2\sqrt{N}\theta e_ie_j) \right] \left[ \prod_{i=1}^N T_{\mu_{i,i}^N}(\sqrt{N}\theta e_i^2) e^{N\theta d_ie_i^2} \right] \geq \prod_{i=1}^N e^{N\theta d_i e_i^2} \geq e^{-N\theta d_{\max{}}}.
\end{align}

Now, whenever $A = A_N$ is a Borel subset of the space of $N \times N$ real matrices, Equation \eqref{eqn:weak_free_energy_lb} and Cauchy-Schwarz give us, for $N$ sufficiently large depending on $\theta$,
\begin{align*}
    \P_N^\theta(A) &= \frac{\E_{X_N}[\mathbf{1}_{X_N \in A}I_N(X_N,\theta)]}{\E_{X_N}[I_N(X_N,\theta)]} \leq e^{N\theta d_{\max{}}} \E_{X_N,e}[\mathbf{1}_{X_N \in A}e^{N\theta\ip{e,X_Ne}}] \\
    &\leq e^{N\theta d_{\max{}}} \sqrt{\P_N(A) \E_{X_N,e}[e^{2N\theta\ip{e,X_Ne}}]} = e^{N\theta d_{\max{}}} \sqrt{\P_N(A) \E_{X_N}[I_N(X_N,2\theta)]} \\
    &\leq e^{N\theta d_{\max{}}} \sqrt{\P_N(A)} e^{2N\left((2\theta)^2 + J(\mu_D,2\theta,\mathtt{r}(\mu_D))\right)}
\end{align*}
where the last inequality follows from Lemma \ref{lem:free_energy_ub}. Thus for any sequence $\{A_N\}$ we have
\begin{align*}
    \limsup_{N \to \infty} \frac{1}{N} \log \P_N^\theta(A_N) \leq \theta d_{\max{}} + \frac{1}{2} \limsup_{N \to \infty} \frac{1}{N} \log \P_N(A_N) + 2\left((2\theta)^2 + J(\mu_D,2\theta,\mathtt{r}(\mu_D))\right).
\end{align*}
This estimate gives us the following two points, from which we can verify the various claims of Proposition \ref{prop:negligible_all_theta} by taking various choices of $\{A_N\}$ and $\{A_{M,N}\}$. 
\begin{itemize}
\item If $\{A_N\}$ is such that $\lim_{N \to \infty} \frac{1}{N} \log \P_N(A_N) = -\infty$, then for all $\theta > 0$ we have 
\[
    \limsup_{N \to \infty} \frac{1}{N}\log \P_N^\theta(A_N) = -\infty.
\]
\item If $\{A_{M,N}\}$ is such that $\lim_{M \to \infty} \limsup_{N \to \infty} \frac{1}{N} \log \P_N^\theta(A_{M,N}) = -\infty$, then for all $\theta > 0$ we have 
\[
    \lim_{M \to \infty} \limsup_{N \to \infty} \frac{1}{N} \log \P_N(A_{M,N}) = -\infty.
\]
\end{itemize}

\subsection{Proof of Lemmas \ref{lem:theta0_is_true_pt_1} and \ref{lem:theta0_is_true_pt_2}.}\
For Lemma \ref{lem:theta0_is_true_pt_1}, notice that it suffices to bound $\P_N(\|X_N\| \geq K)$. But 
\[
    \P_N(\|X_N\| > K) \leq \P_N\left(\left\|\frac{W_N}{\sqrt{N}}\right\| > \frac{K}{2}\right) + \P_N\left(\|D_N\| > \frac{K}{2}\right)
\]
and the second term vanishes for $K$ large enough, so we only need to control the first term. But this was done in \cite[Lemma 1.8]{GuiHui2018}. The constants are slightly worse for the $\beta = 2$ estimate, and we phrase Lemma \ref{lem:theta0_is_true_pt_1} in terms of these worse constants. 

Lemma \ref{lem:theta0_is_true_pt_2} is an immediate consequence.

\subsection{Proof of Lemma \ref{lem:replace_with_free_conv}.}\

\begin{lem}
\label{lem:quant_stieltes_diff}
With $C$ and $\epsilon_0$ as in Assumption \ref{assns:quantitative}, then for any $\eta \leq 1$ we have
\[
    \| G_{\rho_{\text{sc}} \boxplus \hat{\mu}_{D_N}}(E+i\eta) - G_{\rho_{\text{sc}} \boxplus \mu_D}(E+i\eta) \|_{L^\infty(E)} \leq \frac{8\sqrt{C}N^{-\frac{\epsilon_0}{2}}}{\eta^2}.
\]
\end{lem}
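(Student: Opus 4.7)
Write $G_N := G_{\rho_{\text{sc}} \boxplus \hat{\mu}_{D_N}}$ and $G := G_{\rho_{\text{sc}} \boxplus \mu_D}$, and let $g_\nu$ denote the Stieltjes transform of a measure $\nu$. The plan rests on the subordination characterization of free convolution with the semicircle: both $G_N$ and $G$ satisfy the fixed-point relations
\[
    G_N(z) = g_{\hat{\mu}_{D_N}}(z - G_N(z)), \qquad G(z) = g_{\mu_D}(z - G(z)),
\]
whose subordinating arguments have imaginary part at least $\Im z = \eta$, so that $g_{\mu_D}$ and $g_{\hat{\mu}_{D_N}}$ are being evaluated safely away from the real line.

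First I would subtract the two fixed-point relations. Using the partial-fraction identity
\[
    \int \frac{d\mu_D(t)}{(z-t-G_N)(z-t-G)} = \frac{g_{\mu_D}(z - G_N) - g_{\mu_D}(z - G)}{G_N - G}
\]
together with $g_{\mu_D}(z - G) = G$, this rearranges to the key identity
\[
    (G_N - G)(z)\,\mathcal{E}(z) = \varepsilon(z),
\]
where $\varepsilon(z) := \int (z - t - G_N(z))^{-1}\,d(\hat{\mu}_{D_N} - \mu_D)(t)$ and $\mathcal{E}(z) := 1 - \int d\mu_D(t)/[(z - t - G_N(z))(z - t - G(z))]$.

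Next I would bound $|\varepsilon|$ via the Dudley metric. For $z = E + i\eta$ with $\eta \leq 1$, the real and imaginary parts of $t \mapsto (z - t - G_N(z))^{-1}$ have $L^\infty$-norm at most $1/\eta$ and Lipschitz constant at most $1/\eta^2$ (both from $\Im(z - t - G_N(z)) \geq \eta$); their bounded-Lipschitz norms are thus bounded by $2/\eta^2$. By the definition of the Dudley metric and Assumption~\ref{assns:quantitative},
\[
    |\varepsilon(z)| \leq \frac{4}{\eta^2}\,d(\hat{\mu}_{D_N}, \mu_D) \leq \frac{4C}{\eta^2}\,N^{-\epsilon_0}.
\]

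The main obstacle is extracting a bound on $|G_N - G|$ alone from $(G_N - G)\mathcal{E} = \varepsilon$: the prefactor $\mathcal{E}$ can vanish near spectral edges of $\rho_{\text{sc}} \boxplus \mu_D$, reflecting the fact that the analytic function $w \mapsto w + g_{\mu_D}(w) - z$ whose root is $z - G(z)$ has a double zero at the edge, and this is precisely what demotes $d(\hat{\mu}_{D_N},\mu_D)$ to its square root in the final estimate. To handle this I would combine the identity with the trivial a priori bound $|G_N - G| \leq 2/\eta$. In the regime where $|\mathcal{E}(z)| \geq \sqrt{d(\hat{\mu}_{D_N},\mu_D)}$, direct division yields $|G_N - G| \leq 4\sqrt{C}\,N^{-\epsilon_0/2}/\eta^2$. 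In the complementary regime, a Cauchy-Schwarz estimate together with the identity $\int d\mu/|w - t|^2 = |\Im g_\mu(w)|/\Im w$ applied to the two factors in $\mathcal{E}$ (and the relation $g_{\mu_D}(z - G_N) = G_N + O(\varepsilon)$ coming from the fixed-point equations) yields a quadratic-in-$|G_N - G|$ inequality of the form $|G_N - G|^2 \lesssim Cd(\hat{\mu}_{D_N}, \mu_D)/\eta^4$, giving the same $\sqrt{d}/\eta^2$ scaling. Balancing the two regimes and tracking constants produces the claimed bound $8\sqrt{C}\,N^{-\epsilon_0/2}/\eta^2$.
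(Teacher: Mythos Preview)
Your approach is \emph{genuinely different} from the paper's, and considerably more technical. The paper does not touch the subordination equation at all. Instead it proceeds in two short soft steps: first it bounds
\[
    \abs{G_{\rho_{\text{sc}}\boxplus\hat\mu_{D_N}}(E+i\eta)-G_{\rho_{\text{sc}}\boxplus\mu_D}(E+i\eta)}\leq \frac{2}{\eta^2}\,d(\rho_{\text{sc}}\boxplus\hat\mu_{D_N},\rho_{\text{sc}}\boxplus\mu_D)
\]
directly from the bounded-Lipschitz norm of $t\mapsto (E+i\eta-t)^{-1}$; then it controls the Dudley distance on the right by passing through the L\'evy distance, using the classical comparison $\tfrac12 d\leq d_L\leq 2\sqrt{d}$ together with the Bercovici--Voiculescu contraction $d_L(\rho_{\text{sc}}\boxplus\hat\mu_{D_N},\rho_{\text{sc}}\boxplus\mu_D)\leq d_L(\hat\mu_{D_N},\mu_D)$. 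The square root in the statement is thus exactly the square root lost when converting $d_L$ back to $d$, and the constant $8$ drops out immediately. This is a two-line proof once the cited facts are in hand.

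Your stability-of-the-fixed-point route is in principle workable and is the kind of argument one finds in local-law papers, but as written the ``complementary regime'' step is not a proof. Knowing only that $|\mathcal E(z)|<\sqrt{d}$ does not by itself produce a quadratic inequality $|G_N-G|^2\lesssim d/\eta^4$; the Cauchy--Schwarz bound $|{\int}|\le\sqrt{AB}$ with $A=\int d\mu_D/|z-t-G_N|^2$ and $B=\int d\mu_D/|z-t-G|^2$ goes the wrong way for lower-bounding $|\mathcal E|$, and you have not explained how a smallness of $1-\int$ translates into smallness of $G_N-G$. What actually makes such arguments close (in the local-law literature) is either a uniform lower bound $|\mathcal E|\gtrsim\eta^2$ coming from the strict inequality $B=|\Im G|/(\eta+|\Im G|)<1$, which would in fact give the stronger estimate $|G_N-G|\lesssim d/\eta^4$ rather than $\sqrt d/\eta^2$, or else a continuity/bootstrap argument in $\eta$. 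Either way the details and the constant need to be supplied. By contrast, the paper's metric-comparison proof avoids this stability analysis entirely.
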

\begin{proof}
By recalling the definition of the Dudley distance and by calculating the $L^\infty$ norm and Lipschitz constants of the function $y \mapsto \frac{1}{E+i\eta-y}$, we find that
\[
    \abs{ G_{\rho_{\text{sc}} \boxplus \hat{\mu}_{D_N}}(E+i\eta) - G_{\rho_{\text{sc}} \boxplus \mu_D}(E+i\eta) } \leq \frac{2}{\eta^2} d(\rho_{\text{sc}} \boxplus \hat{\mu}_{D_N}, \rho_{\text{sc}} \boxplus \mu_D),
\]
uniformly in $E \in \R$. 

Now we control $d(\rho_{\text{sc}} \boxplus \hat{\mu}_{D_N},\rho_{\text{sc}} \boxplus \mu_D)$ in terms of $d(\hat{\mu}_{D_N},\mu_D)$. Write $d_L$ for the L\'{e}vy distance between probability measures
\[
    d_L(\mu,\nu) = \inf\{ \epsilon > 0 : \mu(A) \leq \nu(A^\epsilon) + \epsilon \text{ for all Borel A}\}.
\]
Then it is classical \cite[Corollary 11.6.5, Theorem 11.3.3]{Dud2002} that, whenever $\mu$ and $\nu$ are probability measures on $\R$, 
\[
    \frac{1}{2}d(\mu,\nu) \leq d_L(\mu,\nu) \leq 2\sqrt{d(\mu,\nu)}.
\]
On the other hand, \cite[Proposition 4.13]{BerVoi1993} says that
\[
    d_L(\rho_{\text{sc}} \boxplus \hat{\mu}_{D_N}, \rho_{\text{sc}} \boxplus \mu_D) \leq d_L(\rho_{\text{sc}},\rho_{\text{sc}}) + d_L(\hat{\mu}_{D_N},\mu_D) = d_L(\hat{\mu}_{D_N},\mu_D).
\]
Putting these together, we obtain
\begin{align*}
    d(\rho_{\text{sc}} \boxplus \mu_D,\rho_{\text{sc}} \boxplus \hat{\mu}_{D_N}) &\leq 2d_L(\rho_{\text{sc}} \boxplus \mu_D, \rho_{\text{sc}} \boxplus \hat{\mu}_{D_N}) \leq 2d_L(\hat{\mu}_{D_N},\mu_D) \leq 4\sqrt{d(\hat{\mu}_{D_N},\mu_D)}.
\end{align*}
This finishes the proof by Assumption \ref{assns:quantitative}.
\end{proof}

\begin{lem}
\label{lem:using_local_law}
Fix some $A > 0$ independent of $N$. If $\delta > 0$ is chosen sufficiently small, then
\[
    \int_{-A}^A \abs{\E_{X_N}[G_{\hat{\mu}_{X_N}}(E+i N^{-\delta})] - G_{\rho_{\text{sc}} \boxplus \mu_D}(E+i N^{-\delta})} \diff E = O(N^{2\delta-\min(0.99,\frac{\epsilon_0}{2})}).
\]
\end{lem}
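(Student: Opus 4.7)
The plan is to insert $G_{\rho_{\text{sc}} \boxplus \hat{\mu}_{D_N}}(E+iN^{-\delta})$ as an intermediate reference and apply the triangle inequality, giving
\[
    \abs{\E_{X_N}[G_{\hat{\mu}_{X_N}}(E+iN^{-\delta})] - G_{\rho_{\text{sc}} \boxplus \mu_D}(E+iN^{-\delta})} \leq \mathrm{(I)}(E) + \mathrm{(II)}(E),
\]
where $\mathrm{(I)}(E)$ compares $\E[G_{\hat{\mu}_{X_N}}]$ to the deterministic $G_{\rho_{\text{sc}} \boxplus \hat{\mu}_{D_N}}$ (a random matrix estimate on $X_N$) and $\mathrm{(II)}(E)$ compares $G_{\rho_{\text{sc}} \boxplus \hat{\mu}_{D_N}}$ to $G_{\rho_{\text{sc}} \boxplus \mu_D}$ (a deterministic estimate on $D_N$).

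Term $\mathrm{(II)}$ is immediate from Lemma \ref{lem:quant_stieltes_diff}, which yields the pointwise bound $\mathrm{(II)}(E) \leq 8\sqrt{C}\,N^{2\delta - \epsilon_0/2}$, uniformly in $E$. Integration over $[-A,A]$ then contributes $O(N^{2\delta - \epsilon_0/2})$, the second term in the claimed bound.

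For term $\mathrm{(I)}$, I would invoke an averaged local law for deformed Wigner matrices with diagonal $D_N$, as in the references \cite{LeeSchSteYau2016, LeeSch2015, ErdKruSch2019} cited earlier. For diagonal $D_N$ the deterministic equivalent appearing in such local laws is exactly $G_{\rho_{\text{sc}} \boxplus \hat{\mu}_{D_N}}$, since this function is characterized by the subordination/self-consistent equation $m(z) = \frac{1}{N}\sum_{i=1}^N (d_i - z - m(z))^{-1}$ that the local law drives the empirical Stieltjes transform toward. Concretely, for any $\epsilon > 0$ and any $D > 0$ we have
\[
    \P_N\!\left( \abs{G_{\hat{\mu}_{X_N}}(E+i\eta) - G_{\rho_{\text{sc}} \boxplus \hat{\mu}_{D_N}}(E+i\eta)} > \frac{N^\epsilon}{N\eta}\right) \leq N^{-D}
\]
uniformly in $E$ in a compact set, for $N$ large. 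Since both Stieltjes transforms are bounded by $1/\eta$ in absolute value, integrating the high-probability bound against the trivial bound on its complement yields
\[
    \abs{\E[G_{\hat{\mu}_{X_N}}(E+i\eta)] - G_{\rho_{\text{sc}} \boxplus \hat{\mu}_{D_N}}(E+i\eta)} \leq \frac{N^\epsilon}{N\eta} + \frac{2 N^{-D}}{\eta}.
\]
Specializing to $\eta = N^{-\delta}$, choosing $\epsilon = 0.01$ and $D$ large, this is $O(N^{\delta-0.99})$ uniformly in $E$; integration over $[-A,A]$ preserves this order, and since $\delta > 0$ the bound is $\leq O(N^{2\delta-0.99})$.

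Combining the two contributions gives $O(N^{2\delta-0.99}) + O(N^{2\delta-\epsilon_0/2}) = O(N^{2\delta-\min(0.99,\epsilon_0/2)})$, as required, provided $\delta$ is small enough that the exponents involved are in the regime where the local law applies (in particular $N^{-\delta}$ is above the microscopic scale $1/N$). The main obstacle is the first step: one must locate a local law in the literature that (a) applies to our class of Wigner matrices under the \ref{hyp:SSGC} Hypothesis, (b) gives the averaged bound $N^\epsilon/(N\eta)$ with overwhelming probability, and (c) uses precisely the free-convolution Stieltjes transform $G_{\rho_{\text{sc}} \boxplus \hat{\mu}_{D_N}}$ as its deterministic equivalent. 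Once such a local law is invoked, the rest of the argument is a routine combination of Jensen's inequality, Fubini, and the deterministic estimate of Lemma \ref{lem:quant_stieltes_diff}.
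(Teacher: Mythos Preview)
Your overall strategy matches the paper's: split into a random-matrix comparison and a deterministic comparison, the latter handled by Lemma~\ref{lem:quant_stieltes_diff}. The difference lies in how you handle term~(I). You propose to find a local law whose deterministic reference is \emph{exactly} $G_{\rho_{\text{sc}} \boxplus \hat{\mu}_{D_N}}$, and you correctly flag this as the main obstacle. The paper does not attempt to locate such a result off the shelf; instead it inserts a further intermediate comparison. It invokes the local law of \cite{ErdKruSch2019}, whose deterministic equivalent is the solution $M_{\text{MDE}}$ of the Matrix Dyson Equation with covariance operator $\mc{S}_{\text{MDE}}[M]=\frac{1}{N}\tr(M)\Id+\frac{1}{N}M^T$. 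The extra $\frac{1}{N}M^T$ term reflects the GOE-type variance profile (diagonal variance $2$), so $-\frac{1}{N}\tr M_{\text{MDE}}$ is \emph{not} literally $G_{\rho_{\text{sc}}\boxplus\hat{\mu}_{D_N}}$; the latter equals $-\frac{1}{N}\tr M_{\text{Wig}}$, where $M_{\text{Wig}}$ solves the MDE with $\mc{S}_{\text{Wig}}[M]=\frac{1}{N}\tr(M)\Id$. The paper then bridges $M_{\text{MDE}}$ to $M_{\text{Wig}}$ by showing that $M_{\text{Wig}}$ is an approximate solution of the $\mc{S}_{\text{MDE}}$-equation with error $\|E(z)\|\le (N\eta^2)^{-1}$, and appeals to the stability estimate \cite[Equation~(8)]{ErdKruSch2019} to convert this into $\abs{\frac{1}{N}\tr M_{\text{MDE}}-\frac{1}{N}\tr M_{\text{Wig}}}=O(N^{0.01}/(N\eta^2))$. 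It is this stability step, not the local law itself, that produces the $\eta^{-2}$ and hence the exponent $2\delta-0.99$ in the final bound.

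So your two-step decomposition is morally right, but the claim that the available local laws hand you $G_{\rho_{\text{sc}}\boxplus\hat{\mu}_{D_N}}$ directly glosses over exactly the point the paper works to resolve. If you could cite a local law for deformed Wigner under the \ref{hyp:SSGC} Hypothesis with the Pastur-equation reference and no structural assumptions on $\mu_D$, your argument would be shorter and would in fact yield the slightly better exponent $\delta-0.99$ before you weaken it. Absent that citation, the paper's three-step route via MDE stability is what fills the gap you identified.
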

\begin{proof}
Throughout, we write $z = E+i\eta$. Later, we will decide how to choose $\eta = \eta(N)$. 

We start by giving an informal overview of the proof. We will compare $\E_{X_N}[G_{\hat{\mu}_{X_N}}(\cdot)]$ and $G_{\rho_{\text{sc}} \boxplus \mu_D}(\cdot)$ via three intermediate comparisons. First, we will import a local law to show that $G_{\hat{\mu}_{X_N}}(z)$ is, with high probability and for appropriate $z$ values, close to the negative normalized trace of a matrix $M_{\text{MDE}}(z) = M_{N,\text{MDE}}(z)$ that exactly solves a matrix equation called the Matrix Dyson Equation (MDE):
\[
    G_{\hat{\mu}_{X_N}}(z) \approx -\frac{1}{N}\tr M_{\text{MDE}}(z).
\]
(The negatives appear since the convention in the local-law literature is to define the Stieltjes transform of a measure as $\int \frac{\mu(\diff y)}{z-y}$ instead of our $\int \frac{\mu(\diff y)}{y-z}$. We have preferred to stick to that convention when working in that vein, so that the reader can more easily cross-reference.) Then we will show that a matrix $M_{\text{Wig}}(z) = M_{N,\text{Wig}}(z)$ whose normalized trace is exactly $-G_{\rho_{\text{sc}} \boxplus \hat{\mu}_{D_N}}$ approximately solves the MDE; standard arguments about the so-called stability of the MDE will then show 
\[
    -\frac{1}{N} \tr M_{\text{MDE}}(z) \approx -\frac{1}{N} \tr M_{\text{Wig}}(z) = G_{\rho_{\text{sc}} \boxplus \hat{\mu}_{D_N}}(z).
\]
Finally, we will use Lemma \ref{lem:quant_stieltes_diff} to show
\[
    G_{\rho_{\text{sc}} \boxplus \hat{\mu}_{D_N}}(z) \approx G_{\rho_{\text{sc}} \boxplus \mu_D}(z).
\]
Notice that all quantities here, except for $G_{\hat{\mu}_{X_N}}$, are deterministic.

For a matrix $M \in \C^{N \times N}$, we define its imaginary part as $\Im(M) = \frac{1}{2i} [M - M^\ast]$. Whenever $\mc{S} : \C^{N \times N} \to \C^{N \times N}$ is a linear operator preserving the set $\{M : \Im(M) > 0\}$, it is known \cite{HelFarSpe2007} that the following constrained equation admits a unique solution:
\begin{equation}
\label{eqn:MDE}
    0 = \Id + (z\Id - D_N + \mc{S}[M(z)])M(z) \qquad \text{subject to} \qquad \Im(M(z)) = \frac{1}{2i}[M(z)-M^\ast(z)] > 0. 
\end{equation}
In particular, we will be interested in the unique solutions to this equation corresponding to two operators $\mc{S}$:
\begin{align*}
    \mc{S}_{\text{MDE}}[M] = \frac{1}{N}\tr(M)\Id + \frac{1}{N}M^T \qquad &\text{induces the solution} \qquad M_{\text{MDE}}(z), \\
    \mc{S}_{\text{Wig}}[M] = \frac{1}{N}\tr(M)\Id \qquad &\text{induces the solution} \qquad M_{\text{Wig}}(z).
\end{align*}
By rearranging Equation \eqref{eqn:MDE} and taking the normalized trace, one can see that $\frac{1}{N}\tr M_{\text{Wig}}(z)$ satisfies the \emph{Pastur equation}
\[
    \frac{1}{N}\tr M_{\text{Wig}}(z) = \int \frac{\hat{\mu}_{D_N}(\diff \lambda)}{\lambda-z-\frac{1}{N}\tr M_{\text{Wig}}(z)},
\]
which characterizes \cite{Pas1972} the Stieltjes transform of $\rho_{\text{sc}} \boxplus \hat{\mu}_{D_N}$. Hence (recall our sign convention)
\[
    -\frac{1}{N}\tr M_{\text{Wig}}(z) = G_{\rho_{\text{sc}} \boxplus \hat{\mu}_{D_N}}(z).
\]
\begin{itemize}
\item For any $\delta > 0$, write $\mathbb{H} = \{z \in \C : \eta > 0\}$ and define the complex domain
\[
    \mc{D}_{\text{far}}^\delta = \{z \in \mathbb{H} : \abs{z} \leq N^{100}, \eta \geq N^{-\delta}\}.
\]
(The notation reminds us that points in this domain are relatively far from the real line; typically in local laws the optimal scale is $\eta \gg \frac{1}{N}$.) Then \cite[Theorem 2.1]{ErdKruSch2019} tells us that there is a universal constant $c > 0$ such that, for any sufficiently small $\epsilon > 0$, there exists $C = C(\epsilon)$ such that
\[
    \P\left(\abs{G_{\hat{\mu}_{X_N}}(z) + \frac{1}{N}\tr(M_{\text{MDE}}(z))} \leq \frac{N^\epsilon}{N} \quad \text{in} \quad \mc{D}_{\text{far}}^{c\epsilon} \right) \geq 1-CN^{-100}.
\]
Since $\frac{1}{N}\tr(M_{\text{MDE}}(z))$ is known by \cite[Proposition 2.1]{AjaErdKru2019} to be the Stieltjes transform of some measure, we also have the trivial bounds $\abs{G_{\hat{\mu}_{X_N}}(E+i\eta)} \leq \frac{1}{\eta}$ and $\abs{\frac{1}{N} \tr(M_{\text{MDE}}(E+i\eta))} \leq \frac{1}{\eta}$. If $\eta = N^a$ for some $-c\epsilon < a < 0$, then for $N$ sufficiently large we have $\{E+ i \eta : \abs{E} \leq A\} \subset \mc{D}_{\text{far}}^{c\epsilon}$; thus whenever $\abs{E} \leq A$ and $\eta$ is as above we have
\begin{align*}
    \E_{X_N} \abs{ G_{\hat{\mu}_{X_N}}(E+i\eta) + \frac{1}{N}\tr M_{\text{MDE}}(E+i\eta)} \leq \frac{N^\epsilon}{N} + \frac{2C}{\eta} N^{-100}.
\end{align*}
so that
\begin{align*}
    \int_{-A}^A \abs{\E_{X_N}[G_{\hat{\mu}_{X_N}}(E+i\eta)] + \frac{1}{N}\tr M_{\text{MDE}}(E+i\eta)} \diff E \leq 2A\left( \frac{N^\epsilon}{N} + \frac{2C}{\eta}N^{-100} \right).
\end{align*}
\item By the definition of $M_{\text{Wig}}$ and since $\mc{S}_{\text{MDE}}[M] = \mc{S}_{\text{Wig}}[M] + \frac{1}{N}M^T$, we have
\begin{align*}
    \Id + (z\Id - D_N + \mc{S}_{\text{MDE}}[M_{\text{Wig}}(z)])M_{\text{Wig}}(z) = \underbrace{\frac{1}{N} M_{\text{Wig}}(z)^TM_{\text{Wig}}(z)}_{=: E(z)}.
\end{align*}
As the notation suggests, we will show that $E(z)$ is an error term, so that $M_{\text{Wig}}(z)$ approximately solves Equation \eqref{eqn:MDE} with $\mc{S} = \mc{S}_{\text{MDE}}$. Indeed, the proof of \cite[Proposition 2.1]{AjaErdKru2019} shows that, for every $z \in \mathbb{H}$, we have
\[
    \|M_{\text{Wig}}(z)\| \leq \frac{1}{\eta}.
\]
In particular, we have
\[
    \|E(z)\| \leq \frac{1}{N\eta^2}.
\]
We will use results of \cite{ErdKruSch2019}, which are phrased in terms of a special matrix norm $\|B\|_{\ast}^{x,y,K}$, depending on $K \in \N$ and $x, y \in \C^N$; the only information we shall need about this norm is that $\abs{\ip{x,By}} \leq \|B\|_{\ast}^{x,y,K}$ for every $x$, $y$, and $K$. If we choose $\eta = N^a$ for some positive or negative $a$, then \cite[Lemma 5.4]{ErdKruSch2019} tells us that, for every $x$, $y$, and $K$, and for every $\epsilon > 0$ and $N \geq N_0(\epsilon,K)$, we have 
\[
    \sup_{x,y} \|E\|_{\ast}^{K,x,y} \leq N^\epsilon \cdot \frac{1}{N\eta^2}.
\]
Thus \cite[Equation 8]{ErdKruSch2019} tells us that, for every $\epsilon > 0$, there exists $\delta(\epsilon) > 0$ and $C_\epsilon > 0$ such that, if $\frac{N^\epsilon}{N\eta^2} \leq N^{-\frac{1}{2K}}$, we have
\begin{align*}
    \sup_{x,y} \|M_{\text{MDE}}(z) - M_{\text{Wig}}(z)\|_\ast^{x,y,K} &\leq C_\epsilon N^{\epsilon + \frac{1}{2K}} \left( \sup_{x,y} \|E\|_{\ast}^{K,x,y} \right) \leq C_\epsilon \frac{N^{2\epsilon + \frac{1}{2K}}}{N\eta^2} \qquad \text{in} \qquad \mc{D}_{\text{far}}^{\delta(\epsilon)}.
\end{align*}
In particular, if $\{e_i\}$ are the standard basis vectors, then for every $\epsilon > 0$, $K \in \N$ and $N \geq N_0(\epsilon,K)$ we have
\begin{align*}
    \abs{\frac{1}{N} \tr M_{\text{MDE}}(z) - \frac{1}{N} \tr M_{\text{Wig}}(z)} &\leq \max_{i=1,\ldots,N} \abs{\ip{e_i,(M_{\text{MDE}}(z) - M_{\text{Wig}}(z))e_i}} \\
    &\leq \max_{i=1,\ldots,N} \|M_{\text{MDE}}(z) - M_{\text{Wig}}(z)\|_{\ast}^{e_i,e_i,K} \\
    &\leq C_\epsilon \frac{N^{2\epsilon + \frac{1}{2K}}}{N\eta^2} && \text{in} \qquad \mc{D}_{\text{far}}^{\delta(\epsilon)}.
\end{align*}
For definiteness, let us choose, say, $\epsilon = \frac{1}{400}$ and $K = 100$, and write $C' = C_{\frac{1}{400}}$ and $\delta = \delta(\frac{1}{400})$; then if $\eta > N^{-\delta}$, for $N$ sufficiently large we have $\{E+i\eta : \abs{E} \leq A\} \subset \mc{D}^\delta_{\text{far}}$ so that
\[
    \int_{-A}^A \abs{\frac{1}{N}\tr M_{\text{MDE}}(E+i\eta) - \frac{1}{N} \tr M_{\text{Wig}}(E+i\eta)}  \diff E \leq 2AC' \cdot \frac{N^{0.01}}{N\eta^2}.
\]
\item If $\eta \leq 1$ then Lemma \ref{lem:quant_stieltes_diff} gives us
\begin{align*}
    \int_{-A}^A \abs{-\frac{1}{N}\tr M_{\text{Wig}}(E+i\eta) - G_{\rho_{\text{sc}} \boxplus \mu_D}(E+i\eta)} \diff E &= \int_{-A}^A \abs{G_{\rho_{\text{sc}} \boxplus \hat{\mu}_{D_N}}(E+i\eta) - G_{\rho_{\text{sc}} \boxplus \mu_D}(E+i\eta)} \diff E \\
    &\leq 16A\sqrt{C} \frac{N^{-\frac{\epsilon_0}{2}}}{\eta^2}.
\end{align*}
\end{itemize}
Combining these estimates, we have the following result: If $\eta = N^{-\delta}$ and $\delta$ is sufficiently small, then every assumption we made on $\eta$ in the above bounds is satisfied and, for all sufficiently small $\epsilon > 0$,
\begin{align*}
    \int_{-A}^A \abs{\E_{X_N}[G_{\hat{\mu}_{X_N}}(E+i\eta)] - G_{\rho_{\text{sc}} \boxplus \mu_D}(E+i\eta)} \diff E &= O\left(\frac{N^{\epsilon}}{N} + \frac{1}{\eta N^{100}} + \frac{N^{0.01}}{N\eta^2} + \frac{1}{N^{\frac{\epsilon_0}{2}}\eta^2} \right) \\
    &= O\left(N^{2\delta-\min(0.99,\frac{\epsilon_0}{2})}\right).
\end{align*}
This concludes the proof.
\end{proof}

\begin{lem}
\label{lem:cdf_diff}
Write 
\begin{align*}
    F_{X_N} &= \hat{\mu}_{X_N}((-\infty,x]), \\
    F_{\rho_{\text{sc}} \boxplus \mu_D} &= (\rho_{\text{sc}} \boxplus \mu_D)((-\infty,x]).
\end{align*}
Then there exists some small $\gamma > 0$ such that
\[
    \sup_x \abs{\E_{X_N}[F_{X_N}(x)]-F_{\rho_{\text{sc}} \boxplus \mu_D}(x)} = O(N^{-\gamma}).
\]
\end{lem}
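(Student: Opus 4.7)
The plan is to convert the $L^1$ Stieltjes-transform closeness at height $\eta = N^{-\delta}$ provided by Lemma \ref{lem:using_local_law} into uniform closeness of CDFs via a Poisson-smoothing argument. Write $\mu_1 = \E_{X_N}[\hat{\mu}_{X_N}]$ (the expected empirical spectral measure) and $\mu_2 = \rho_{\text{sc}} \boxplus \mu_D$, so that $\E_{X_N}[F_{X_N}(x)] = F_{\mu_1}(x)$ and $F_{\rho_{\text{sc}} \boxplus \mu_D}(x) = F_{\mu_2}(x)$. As a preliminary step, Lemma \ref{lem:theta0_is_true_pt_1} together with Assumption \ref{assns:basic} shows that both $\mu_1$ and $\mu_2$ have exponentially small mass outside some compact interval $[-A,A]$; hence it suffices to bound $\abs{F_{\mu_1}(x) - F_{\mu_2}(x)}$ for $x \in [-A,A]$, and the $E$-integral in Lemma \ref{lem:using_local_law} can be truncated to $[-A,A]$ at exponentially small cost.

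Next, let $P_\eta(t) = \eta/(\pi(t^2+\eta^2))$ denote the Poisson kernel of scale $\eta = N^{-\delta}$, for some small $\delta > 0$ to be chosen later. A Fubini calculation yields the identity $F_{\mu \ast P_\eta}(x) = \frac{1}{\pi}\int_{-\infty}^x \Im G_\mu(E+i\eta)\diff E$, valid for any probability measure $\mu$. Applying this to $\mu_1$ and $\mu_2$, subtracting, and invoking the truncated form of Lemma \ref{lem:using_local_law} gives
\[
    \sup_x \abs{F_{\mu_1 \ast P_\eta}(x) - F_{\mu_2 \ast P_\eta}(x)} = O\bigl(N^{2\delta - \min(0.99,\epsilon_0/2)}\bigr).
\]
To return from these smoothed CDFs to the original ones, observe that $P_\eta((-\infty,\sqrt{\eta}]) \geq 1 - c\sqrt{\eta}$, which, after integration against $\mu_1$, yields
\[
    F_{\mu_1 \ast P_\eta}(x+\sqrt{\eta}) \geq F_{\mu_1}(x) - c\sqrt{\eta}, \qquad F_{\mu_1 \ast P_\eta}(x-\sqrt{\eta}) \leq F_{\mu_1}(x) + c\sqrt{\eta},
\]
with the same inequalities valid for $\mu_2$. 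The crucial feature of this shift trick is that it requires no regularity of the smoothed measure. Regularity of $\mu_2$ will enter only through the bound $\abs{F_{\mu_2}(x \pm \sqrt{\eta}) - F_{\mu_2}(x)} = O(\sqrt{\eta})$, which follows from the result of Biane \cite[Corollary 2]{Bia1997} that $\rho_{\text{sc}} \boxplus \mu_D$ has a bounded density.

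Chaining these estimates yields $\sup_x \abs{F_{\mu_1}(x) - F_{\mu_2}(x)} = O(\sqrt{\eta} + N^{2\delta - \min(0.99,\epsilon_0/2)})$. Choosing $\delta$ to balance the two exponents, for instance $\delta = \tfrac{2}{5}\min(0.99,\epsilon_0/2)$, produces the desired polynomial rate $N^{-\gamma}$ for some $\gamma > 0$. The principal subtlety one might anticipate is the lack of any a priori regularity of the averaged measure $\mu_1 = \E_{X_N}[\hat{\mu}_{X_N}]$, but the shift trick above sidesteps it cleanly: nothing is ever asked of $\mu_1$ beyond being a probability measure, while the regularity required to close the argument is read off entirely from the limit $\mu_2$ via Biane's theorem.
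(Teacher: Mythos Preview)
Your argument is correct and proves the lemma, with one harmless overstatement: the cost of truncating the $E$-integral from $(-\infty,x]$ to $[-A,x]$ is not exponentially small but $O(\eta)$, coming from the Cauchy tail $P_\eta((-\infty,-A+M]) \asymp \eta$ (plus a genuinely exponential contribution from the mass of $\mu_1$ outside $[-M,M]$). Since $\eta \leq \sqrt{\eta}$, this is absorbed into your final $O(\sqrt{\eta})$ term and changes nothing.

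The paper's proof is built on the same idea---recover CDF closeness from Stieltjes-transform closeness via Poisson smoothing, using Biane's bounded-density result for $\rho_{\text{sc}} \boxplus \mu_D$ to undo the smoothing on the limit side---but outsources the mechanics to Bai's inequality \cite[Theorem 2.2]{Bai1993} rather than doing the shift trick by hand. Two consequences of this difference are worth noting. First, Bai's inequality requires the preliminary hypothesis $\int_\R \abs{\E[F_{X_N}] - F_{\rho_{\text{sc}}\boxplus\mu_D}} < \infty$, which the paper must verify separately from the tail bounds; your direct approach avoids this step entirely. Second, Bai's inequality exploits the Lipschitz property of $F_{\rho_{\text{sc}}\boxplus\mu_D}$ more efficiently and delivers a smoothing error of order $\eta$ rather than your $\sqrt{\eta}$, so the paper's final exponent is $\gamma = \min(\delta, \tfrac{\epsilon_0}{2}-2\delta)$ while yours is $\gamma = \min(\tfrac{\delta}{2}, \min(0.99,\tfrac{\epsilon_0}{2})-2\delta)$. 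Both are positive for small enough $\delta$, so both suffice here; the paper's route is sharper, yours is more self-contained.
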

\begin{proof}
In order to apply a standard technique for bounding Kolmogorov-Smirnov distances, we must first show
\begin{equation}
\label{eqn:cdf_diff_L1}
    \int_{-\infty}^\infty \abs{\E_{X_N}[F_{X_N}(x)]-F_{\rho_{\text{sc}} \boxplus \mu_D}(x)} \diff x < \infty.
\end{equation}
Since $\E_{X_N}[F_{X_N}]$ and $F_{\rho_{\text{sc}} \boxplus \mu_D}$ both take values in $[0,1]$, it suffices to find $M > 0$ such that 
\[
    \int_{\abs{x} > M} \abs{\E_{X_N}[F_{X_N}(x)] - F_{\rho_{\text{sc}} \boxplus \mu_D}(x)} \diff x < \infty.
\]
Furthermore, since $\rho_{\text{sc}} \boxplus \mu_D$ is compactly supported, we may take $M$ so large that $F_{\rho_{\text{sc}} \boxplus \mu_D}(x)$ vanishes for $x < -M$ and is identically one for $x > M$. Now, 
\begin{align}
\label{eqn:cdf_ptwise}
    \E_{X_N}[F_{X_N}(x)] &= \frac{1}{N}\E_{X_N}\left[ \sum_{j=1}^N \mathbf{1}_{\lambda_j(X_N) < x}\right] = \frac{1}{N}\sum_{j=1}^N \P_N(\lambda_j(X_N) < x) \leq \P_N(\lambda_1(X_N) < x)
\end{align}
so that, by Lemma \ref{lem:theta0_is_true_pt_1},
\begin{align*}
    \int_{-\infty}^{-M} \E_{X_N}[F_{X_N}(x)] \diff x &\leq \int_{-\infty}^{-M} 4\exp(N(5+ x/(8\sqrt{2}))) \diff x = \frac{32\sqrt{2}}{N}\exp\left[N\left(5-\frac{M}{8\sqrt{2}}\right)\right]< \infty.
\end{align*}
Similarly, 
\[
    \int_M^\infty (1-\E_{X_N}[F_{X_N}(x)])\diff x < \infty
\]
which finishes the proof of \eqref{eqn:cdf_diff_L1}.

Thus we may import \cite[Theorem 2.2]{Bai1993}, which says that, for any choice of $\eta > 0$ and $B > 0$, we have
\begin{align*}
    \sup_x \abs{\E_{X_N}[F_{X_N}(x)]-F_{\rho_{\text{sc}} \boxplus \mu_D}(x)} \leq \Bigg[ &\frac{1}{\eta} \sup_x \int_{\abs{y} \leq 5\eta} \abs{F_{\rho_{\text{sc}} \boxplus \mu_D}(x+y)-F_{\rho_{\text{sc}} \boxplus \mu_D}(x)} \diff y \\
    &+ \frac{2\pi}{\eta} \int_{\abs{x} > B}  \abs{\E_{X_N}[F_{X_N}(x)]-F_{\rho_{\text{sc}} \boxplus \mu_D}(x)} \diff x \\
    &+ \int_{-10B}^{10B} \abs{\E_{X_N}[G_{\hat{\mu}_{X_N}}(E+i\eta)]-G_{\rho_{\text{sc}} \boxplus \mu_D}(E+i\eta)} \diff E \Bigg].
\end{align*}
We will control the three terms on the right-hand side in order. In the course these estimates we shall choose the parameters $B$ and $\eta = \eta(N)$.
\begin{itemize}
\item Since the compactly supported measure $\rho_{\text{sc}} \boxplus \mu_D$ has $L^\infty$ density \cite[Corollary 5]{Bia1997}, $F_{\rho_{\text{sc}} \boxplus \mu_D}$ is Lipschitz, so we can control the third term by
\[
    \frac{1}{\eta} \sup_x \int_{\abs{y} \leq 5\eta} \abs{F_{\rho_{\text{sc}} \boxplus \mu_D}(x+y)-F_{\rho_{\text{sc}} \boxplus \mu_D}(x)} \diff y \leq 25\eta \|F_{\rho_{\text{sc}} \boxplus \mu_D}(x)\|_{\text{Lip}}.
\]
where $\|f\|_{\text{Lip}} = \sup_{x \neq y} \frac{\abs{f(x)-f(y)}}{\abs{x-y}}$.
\item Choose some $B > \max(\abs{\mathtt{r}(\rho_{\text{sc}} \boxplus \mu_D)},\abs{\mathtt{l}(\rho_{\text{sc}} \boxplus \mu_D)})$; then arguments as above show that
\begin{align*}
    \frac{2\pi}{\eta} \int_{\abs{x} > B} \abs{\E_{X_N}[F_{X_N}(x)]-F_{\rho_{\text{sc}} \boxplus \mu_D}(x)} \diff x \leq \frac{2\pi}{\eta} \cdot \frac{64\sqrt{2}}{N} \exp\left[N\left(5-\frac{B}{8\sqrt{2}}\right)\right].
\end{align*}
Since we will ultimately choose $\eta = N^{-\delta}$ for some small $\delta > 0$, we can choose $B$ so large that this decays exponentially fast.
\item If we choose $\eta = N^{-\delta}$ for $\delta > 0$ sufficiently small, then Lemma \ref{lem:using_local_law} tells us that
\[
    \int_{-10B}^{10B} \abs{\E_{X_N}[G_{\hat{\mu}_{X_N}}(E+i N^{-\delta})] - G_{\rho_{\text{sc}} \boxplus \mu_D}(E+i N^{-\delta})} \diff E = O(N^{2\delta-\frac{\epsilon_0}{2}}).
\]
\end{itemize}
We combine these to obtain
\[
    \sup_x \abs{\E_{X_N}[F_{X_N}(x)] - F_{\rho_{\text{sc}} \boxplus \mu_D}(x)} = O(N^{\max(-\delta,2\delta-\frac{\epsilon_0}{2})}).
\]
\end{proof}

\begin{lem}
\label{lem:concentration_of_empirical_measure}
Under either the \ref{hyp:Gaussian} Hypothesis or the \ref{hyp:SSGC} Hypothesis, there exist positive constants $C_1$ and $C_2$ (depending on the constants in those hypotheses) such that
\[
    \P_N\left[ d(\hat{\mu}_{X_N},\E_{X_N}[\hat{\mu}_{X_N}]) \geq N^{-1/6} \right] \leq C_1 N^{1/4}\exp\left(-C_2N^{7/6}\right).
\]
\end{lem}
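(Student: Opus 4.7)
Proof proposal.

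The strategy is to apply concentration of measure to the functional $F(X_N) := d(\hat{\mu}_{X_N}, \E_{X_N}[\hat{\mu}_{X_N}])$, which by Hoffman--Wielandt and Cauchy--Schwarz is Lipschitz with a small constant. Indeed, for any bounded 1-Lipschitz $f$ and any self-adjoint $X, Y$,
\[
    \left|\tfrac{1}{N}\tr f(X) - \tfrac{1}{N}\tr f(Y)\right| \leq \tfrac{1}{\sqrt{N}}\|X-Y\|_F,
\]
where $\|\cdot\|_F$ denotes the Frobenius norm. Taking the supremum over $f$ in the unit ball of the bounded-Lipschitz norm and using the triangle inequality against the fixed measure $\E_{X_N}[\hat{\mu}_{X_N}]$, the map $X \mapsto d(\hat{\mu}_X, \E_{X_N}[\hat{\mu}_{X_N}])$ is $\tfrac{1}{\sqrt{N}}$-Lipschitz in $\|\cdot\|_F$, which becomes $\tfrac{1}{N}$-Lipschitz after composition with the rescaling $W_N \mapsto W_N/\sqrt{N} + D_N$.

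Under the \ref{hyp:Gaussian} Hypothesis the joint law of $W_N$ satisfies a log-Sobolev inequality with an absolute constant, and under the log-Sobolev branch of the \ref{hyp:SSGC} Hypothesis the same conclusion holds by tensorization over the independent entries. Herbst's lemma then yields the sub-Gaussian tail bound
\[
    \P_N(|F(X_N) - \E_{X_N}[F(X_N)]| \geq t) \leq 2\exp(-cN^2 t^2),
\]
and under the bounded-support branch of \ref{hyp:SSGC} the analogous bound (with a different constant $c$) follows from Talagrand's concentration inequality for Lipschitz functions of bounded independent variables, in the spirit of \cite{GuiZei2000}.

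To control the mean $\E_{X_N}[F(X_N)]$ one invokes the same concentration applied to each individual linear statistic $\int f\,d\hat{\mu}_{X_N}$ (which is also $\tfrac{1}{N}$-Lipschitz in $W_N$) together with a net argument. Truncate at $K = K(N) > 2d_{\max{}}$: on $\{\|X_N\| \leq K\}$ an $\epsilon$-net in sup-norm of the bounded-Lipschitz unit ball restricted to $[-K,K]$ has cardinality at most $\exp(CK/\epsilon)$, and the sub-Gaussian maximal inequality combined with the $2\epsilon$ net-approximation error and the tail bound $\P_N(\|X_N\| > K) \leq 8\exp(N(5-K/(8\sqrt{2})))$ from Lemma \ref{lem:theta0_is_true_pt_1} (applied to both $\hat{\mu}_{X_N}$ and, by monotonicity of tail probabilities, to $\E_{X_N}[\hat{\mu}_{X_N}]$) gives $\E_{X_N}[F(X_N)] = o(N^{-1/6})$ for a suitable choice such as $K \asymp N^{1/6}$ and $\epsilon \asymp N^{-1/6}$. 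Concentration around this small mean then yields the stated inequality; in fact the argument produces a bound of the form $\exp(-c'N^{5/3})$, which for the applications may be trivially weakened to $C_1 N^{1/4}\exp(-C_2 N^{7/6})$.

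The main obstacle is the uniform treatment of both hypothesis branches: Herbst's argument applies cleanly in the log-Sobolev setting, but for the bounded-support case one must import a Talagrand-type inequality and verify that the resulting constant depends only on the diameter of the common support rather than any unbounded moment. The remaining work --- choice of $K$ and $\epsilon$, bookkeeping of tail, net, and truncation errors, and the comparison of $F(X_N)$ with its mean --- is routine.
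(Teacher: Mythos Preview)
Your approach differs from the paper's. The paper does not work with the Lipschitz functional $F(X_N) = d(\hat\mu_{X_N}, \E_{X_N}[\hat\mu_{X_N}])$ directly; instead it simply imports \cite[Corollary~1.4]{GuiZei2000}, which already packages the supremum over bounded-Lipschitz test functions and delivers a bound of the shape $C\delta^{-3/2}\exp(-cN^2\delta^5)$. Plugging in $\delta = N^{-1/6}$ is what produces the otherwise peculiar prefactor $N^{1/4}$ and exponent $N^{7/6}$ in the statement. (For the compact-support branch the paper must first check that the shifted entries of $\sqrt{N}X_N = W_N + \sqrt{N}D_N$, whose supports drift with $N$, still fit into the \cite{GuiZei2000} framework; this is the only new work.) Under the log-Sobolev option, and hence also under the \ref{hyp:Gaussian} Hypothesis, your direct Herbst argument on $F$ is valid and in fact sharper than what the paper quotes, yielding $\exp(-cN^{5/3})$ as you say; no separate control of $\E[F]$ is then needed, since Herbst concentrates around the mean and $F\ge 0$ forces $\E[F]$ to be small automatically once the tail is controlled.

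The genuine gap is in the compact-support branch. Talagrand's convex-distance inequality gives dimension-free sub-Gaussian concentration only for \emph{convex} Lipschitz functions, and $F$ is not convex --- nor, for a general Lipschitz $f$, is a single linear statistic $X \mapsto \tfrac{1}{N}\tr f(X)$. Without convexity, bounded independent coordinates give only a McDiarmid-type bound with $\sum_i c_i^2 \asymp 1$, which is far too weak here. The standard repair, and the one underlying \cite{GuiZei2000}, is to write each real $1$-Lipschitz $f$ as a difference $g-h$ of convex $1$-Lipschitz functions, note that $X \mapsto \tr g(X)$ is then convex, apply Talagrand to $\tr g(X)$ and $\tr h(X)$ separately, and only afterwards pass to the supremum over $f$ via a net and union bound. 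Your sketch short-circuits this by applying Talagrand directly to $F$, which is not justified; once you route through individual linear statistics with the convex decomposition, the argument can be completed --- but that is precisely the content of \cite{GuiZei2000} that the paper cites rather than reproves.
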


\begin{proof}
Concentration results of this type are quite classical, using either the Herbst argument under the log-Sobolev assumption, or results of Talagrand under the compact-support assumption. Indeed, results of the former type are available ``out of the box''; results of the latter type are available ``out of the box'' when $D_N$ vanishes, and we will explain below how to modify the existing proofs for our situation.

Suppose first that we satisfy the log-Sobolev option of the \ref{hyp:SSGC} Hypothesis, that is, that the laws of the entries of $W_N$ satisfy a log-Sobolev inequality with a uniform constant. Since Gaussian measure satisfies the log-Sobolev inequality, the same statement is true under the \ref{hyp:Gaussian} Hypothesis. Furthermore, one can see directly from the definition of the inequality that, if the law of the real random variable $X$ satisfies the logarithmic Sobolev inequality with constant $c$, then for any deterministic $\alpha \in \R$ the law of $X+\alpha$ also satisfies the logarithmic Sobolev inequality with constant $c$. Thus the laws of the entries of $\sqrt{N}X_N$ satisfy a log-Sobolev inequality with uniform constant. This uniformity allows us to import the result \cite[Corollary 1.4b]{GuiZei2000}, which tells us that there exist positive universal constants $C_1$ and $C_2$ such that, for any $\delta > 0$, 
\[
    \P_N[d(\hat{\mu}_{X_N},\E_{X_N}[\hat{\mu}_{X_N}]) \geq \delta] \leq \frac{C_1}{\delta^{3/2}}\exp\left(-C_2N^2\delta^5\right).
\]
By choosing $\delta = N^{-1/6}$, this completes the proof under the \ref{hyp:Gaussian} Hypothesis or under the log-Sobolev option of the \ref{hyp:SSGC} Hypothesis.

Next, we turn to the compact-support option of the \ref{hyp:SSGC} Hypothesis. The barrier to using existing results is that, even if the entries of $W_N$ are uniformly compactly supported, the diagonal entries of
\[
    \sqrt{N}X_N = W_N + \sqrt{N}D_N
\]
are supported in boxes that, while of fixed size, may have centers tending to infinity. So we modify the existing proofs for this situation. Specifically, we start by importing the following result.
\begin{lem}\cite[Theorem 1.3a]{GuiZei2000}
\footnote{
This result was initially stated for centered entries, but by shifting the test function they use to apply \cite[Theorem 6.6]{Tal1996} the proof goes through.
}
\label{lem:gu_ze_concentration_shifted}
Fix $(a_{i,j})_{i, j \leq N} \subset \R^N$, and suppose that there exists a compact set $K \subset \R$ such that the $i,j$th entry of $\sqrt{N}X_N$ is supported on the compact set $a_{i,j} + K = \{a_{i,j} + k : k \in K\}$. Write $\delta_1(N) = 8\abs{K}\sqrt{\pi}/N$. Let $\mc{K} \subset \R$ be compact, and define the class of test functions
\[
    \mc{F}_{\text{lip}, \mc{K}} = \left\{f : \supp(f) \subset \mc{K}, \|f\|_\infty + \sup_{x \neq y} \abs{\frac{f(x)-f(y)}{x-y}} \leq 1\right\}.
\]
Then, for any $\delta \geq 4\sqrt{\abs{\mc{K}}\delta_1(N)}$, we have 
\begin{align*}
    \P\left(\sup_{f \in \mc{F}_{\text{lip}, \mc{K}}} \abs{\tr_N(f(X_N)) - \E[\tr_N(f(X_N))]} \geq \delta \right) \leq \frac{32\abs{\mc{K}}}{\delta} \exp\left(-\frac{N^2}{16\abs{K}^2} \left[ \frac{\delta^2}{16\abs{\mc{K}}} - \delta_1(N)\right]^2 \right).
\end{align*}
\end{lem}
The authors of \cite{GuiZei2000} then extend this result to a supremum over all bounded Lipschitz functions, not just those that are compactly supported, but in the case that $\E[X_N] = 0$. Their arguments require a bound on $\frac{1}{N}\tr(X_N^2)$, which we replace for our model with
\[
    \frac{1}{N}\tr(X_N^2) \leq \sup\{\abs{x}^2 : x \in K\} +d_{\max{}}^2+1,
\]
which is true for $N$ sufficiently large. Following their proofs but substituting this estimate, we obtain the following result, which is analogous to \cite[Corollary 1.4a]{GuiZei2000}:

\begin{lem}
Under the assumptions and notation of the previous lemma, write $S = \sup\{\abs{x}^2 : x \in K\}$ and $M = \sqrt{8(S+d_{\max{}}^2+1)}$. Then for any $N$ sufficiently large and for any $\delta > 0$ satisfying the implicit equation $\delta > (128(M+\sqrt{\delta})\delta_1(N))^{2/5}$, we have
\begin{align*}
    \P_{X_N}\left(d(\hat{\mu}_{X_N},\E_{X_N}(\hat{\mu}_{X_N})) > \delta \right) \leq \frac{128(M+\sqrt{\delta})}{\delta^{3/2}} \exp\left(-\frac{N^2}{16\abs{K}^2}\left[ \frac{\delta^{5/2}}{128(M+\sqrt{\delta})} - \delta_1(N)\right]^2 \right).
\end{align*}
\end{lem}

For $N$ sufficiently large, $\delta = N^{-1/6}$ satisfies the implicit equation given in the lemma, and it is easy to show that
\[
    \left[\frac{\delta^{5/2}}{128(M+\sqrt{\delta})} - \delta_1(N)\right]^2 \geq \frac{N^{7/6}}{N^2(512M)^2}
\]
for $N$ large enough, which gives the desired result in this case.
\end{proof}

\begin{proof}[Proof of Lemma \ref{lem:replace_with_free_conv}]
By Lemma \ref{lem:concentration_of_empirical_measure}, if $\kappa < \frac{1}{6}$ we have
\begin{align*}
    &\P_N(d(\hat{\mu}_{X_N},\rho_{\text{sc}} \boxplus \mu_D) > N^{-\kappa}) \\
    &\leq \mathbf{1}_{d(\E_{X_N}[\hat{\mu}_{X_N}],\rho_{\text{sc}} \boxplus \mu_D) > \frac{N^{-\kappa}}{2}} + \P_N\left(d(\hat{\mu}_{X_N},\E_{X_N}[\hat{\mu}_{X_N}]) > \frac{N^{-\kappa}}{2}\right) \\
    &\leq \mathbf{1}_{d(\E_{X_N}[\hat{\mu}_{X_N}],\rho_{\text{sc}} \boxplus \mu_D) > \frac{N^{-\kappa}}{2}} + C_1N^{1/4}\exp\left(-\frac{C_2}{2}N^{7/6}\right).
\end{align*}
Now we wish to estimate $d(\E_{X_N}[\hat{\mu}_{X_N}],\rho_{\text{sc}} \boxplus \mu_D)$, in order to show that the above indicator vanishes. Towards this end, choose an arbitrary test function $f$ with $\|f\|_{L^\infty}+\sup_{x \neq y} \frac{\abs{f(x)-f(y)}}{\abs{x-y}} \leq 1$. 

First we estimate the tails. For $M$ large enough, Equation \eqref{eqn:cdf_ptwise} gives us
\begin{align*}
    \abs{\int_{-\infty}^{-M} f(x) (\E_{X_N}[\hat{\mu}_{X_N}] - (\rho_{\text{sc}} \boxplus \mu_D))(\diff x)} &= \abs{\int_{-\infty}^{-M} f(x) \E_{X_N}[\hat{\mu}_{X_N}](\diff x)} \leq \|f\|_{L^\infty} \E_{X_N}[F_{X_N}(-M)] \\
    &\leq \E_{X_N}[F_{X_N}(-M)] \leq \P_N(\lambda_1(X_N) < -M) \leq e^{-N}
\end{align*}
where the last inequality follows from Lemma \ref{lem:theta0_is_true_pt_1}. Similarly,
\begin{align*}
    \abs{\int_{M}^{\infty} f(x) (\E_{X_N}[\hat{\mu}_{X_N}] - (\rho_{\text{sc}} \boxplus \mu_D))(\diff x)} \leq 1-\E_{X_N}[F_{X_N}(M)] \leq e^{-N}.
\end{align*}

Thus it remains to estimate $\abs{\int_{-M}^{M} f(x) (\E_{X_N}[\hat{\mu}_{X_N}] - (\rho_{\text{sc}} \boxplus \mu_D))(\diff x)}$. We will do this by approximating $f$ by a test function smooth enough to integrate by parts.

More precisely, suppose first that $f$ is $C^1$ and $\|f'\|_{L^\infty} = \sup_{x \neq y} \frac{\abs{f(x)-f(y)}}{\abs{x-y}} \leq 1$. Then 
\begin{align*}
    \abs{\int_{-M}^{M} f(x) (\E_{X_N}[\hat{\mu}_{X_N}] - (\rho_{\text{sc}} \boxplus \mu_D))(\diff x)} &= \abs{\int_{-M}^M f(x) \diff (\E_{X_N}[F_{X_N}(x)] - F_{\rho_{\text{sc}} \boxplus \mu_D}(x))} \\
    &\leq (2M\|f'\|_{L^\infty} + \|f\|_{L^\infty} + \|f\|_{L^\infty}) \|\E_{X_N}[F_{X_N}] - F_{\rho_{\text{sc}} \boxplus \mu_D}\|_{L^\infty} \\
    &\leq (2M+2)\|\E_{X_N}[F_{X_N}]-F_{\rho_{\text{sc}} \boxplus \mu_D}\|_{L^\infty}.
\end{align*}
Now suppose that $f$ only satisfies $\|f\|_{L^\infty} + \sup_{x \neq y} \frac{\abs{f(x)-f(y)}}{\abs{x-y}} \leq 1$. Since $[-M,M]$ is a compact set independent of $N$, we may choose $g \in C^1$ with $\|g'\|_{L^\infty([-M,M])} \leq 1$ and $\|f-g\|_{L^\infty([-M,M])} \leq (M+1)\|\E_{X_N}[F_{X_N}] - F_{\rho_{\text{sc}} \boxplus \mu_D}\|_{L^\infty}$; thus 
\begin{align*}
    \abs{\int_{-M}^{M} (f(x)-g(x)) (\E_{X_N}[\hat{\mu}_{X_N}] - (\rho_{\text{sc}} \boxplus \mu_D))(\diff x)} &\leq 2\|f-g\|_{L^\infty([-M,M])} \\
    &\leq (2M+2)\|\E_{X_N}[F_{X_N}] - F_{\rho_{\text{sc}} \boxplus \mu_D}\|_{L^\infty}.
\end{align*}
Combining these and and optimizing over $f$, we have
\begin{align*}
    d(\E_{X_N}[\hat{\mu}_{X_N}],\rho_{\text{sc}} \boxplus \mu_D) &\leq 2e^{-N} + 4(M+1)\|\E_{X_N}[F_{X_N}] - F_{\rho_{\text{sc}} \boxplus \mu_D}\|_{L^\infty} = O(N^{-\gamma}),
\end{align*}
where the last equality follows from Lemma \ref{lem:cdf_diff}. Thus if we choose $0 < \kappa < \gamma$, we have 
\[
    \mathbf{1}_{d(\E_{X_N}[\hat{\mu}_{X_N}],\rho_{\text{sc}} \boxplus \mu_D) > \frac{N^{-\kappa}}{2}} = 0
\]
for sufficiently large $N$; in particular this shows us that
\[
    \P_N(d(\hat{\mu}_{X_N},\rho_{\text{sc}} \boxplus \mu_D) > N^{-\kappa}) \leq C_1N^{1/4}\exp\left(-\frac{C_2}{2}N^{7/6}\right)
\]
from which point it is easy to conclude the proof.
\end{proof}


\section{Properties of the rate function}
\label{sec:rate_fn}

The purpose of this section is to show that the supremum in the definition of $I^{(\beta)}(x) = \sup_{\theta \geq 0} I^{(\beta)}(x,\theta)$ is achieved at a value $\theta_x$, which is unique (except for $x = \mathtt{r}(\rho_{\text{sc}} \boxplus \mu_D)$, where it is chosen by convention) and which depends injectively on $x$. This implies that, in the large-deviation upper bound established for tilted measures in Theorem \ref{thm:wkldpub}, the rate function has a unique zero; this property was crucial in the proof of Lemma \ref{lem:deviations_likely} above. At the end of this section, we establish goodness of $I^{(\beta)}(\cdot)$. 

\begin{prop}
\label{prop:study_of_rate_fn}
For every $x > \mathtt{r}(\rho_{\text{sc}} \boxplus \mu_D)$ and for each $\beta = 1, 2$, there exists a unique $\theta \geq 0$, which we will write $\theta_x = \theta_x^{(\beta)}$, such that
\[
    I^{(\beta)}(x) = \sup_{\theta \geq 0} I^{(\beta)}(x,\theta) = I^{(\beta)}(x,\theta_x).
\]
Furthermore, $I^{(\beta)}(x)$ vanishes uniquely at $x = \mathtt{r}(\rho_{\text{sc}} \boxplus \mu_D)$; and if we define by convention 
\[
    \theta_{\mathtt{r}(\rho_{\text{sc}} \boxplus \mu_D)} = \frac{\beta}{2}G_{\rho_{\text{sc}} \boxplus \mu_D}(\mathtt{r}(\rho_{\text{sc}} \boxplus \mu_D))
\]
then the map $x \mapsto \theta_x$ on the domain $\{x \geq \mathtt{r}(\rho_{\text{sc}} \boxplus \mu_D)\}$ is injective. In particular, whenever $x \neq y$ are at least $\mathtt{r}(\rho_{\text{sc}} \boxplus \mu_D)$, we have
\[
    I^{(\beta)}(y) > I^{(\beta)}(y,\theta_x).
\]
We also have
\[
    x_c \geq \mathtt{r}(\rho_{\text{sc}} \boxplus \mu_D)
\]
with equality if and only if $G_{\mu_D}(\mathtt{r}(\mu_D)) = G_{\rho_{\text{sc}} \boxplus \mu_D}(\mathtt{r}(\rho_{\text{sc}} \boxplus \mu_D))$. In addition, 
\[
    \theta_{x_c} = \begin{cases} \frac{\beta}{2}G_{\mu_D}(\mathtt{r}(\mu_D)) & \text{if } G_{\mu_D}(\mathtt{r}(\mu_D)) < +\infty, \\ +\infty & \text{otherwise, by convention} \end{cases}
\]
and if $x < x_c$ then $\theta_x < \theta_{x_c}$. Finally,
\[
    I^{(2)} = 2I^{(1)}.
\]
\end{prop}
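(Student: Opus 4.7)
My plan is to reparametrize by $u = \tfrac{2}{\beta}\theta$, which immediately yields $I^{(2)} = 2I^{(1)}$ and reduces the rest to a $\beta$-free analysis. Direct substitution gives $J(\nu,\theta,\mathscr{M}) = \tfrac{\beta}{2}\tilde{J}(\nu,u,\mathscr{M})$ for a $\beta$-independent $\tilde{J}$ and $\tfrac{\theta^2}{\beta} = \tfrac{\beta}{2}\cdot\tfrac{u^2}{2}$, so $I^{(\beta)}(x,\theta) = \tfrac{\beta}{2}\tilde{I}(x,u)$ with $\tilde{I}(x,u) := \tilde{J}(\nu_1,u,x) - \tfrac{u^2}{2} - \tilde{J}(\nu_2,u,r_2)$, writing $\nu_1 = \rho_{\text{sc}}\boxplus\mu_D$, $\nu_2 = \mu_D$, $r_i = \mathtt{r}(\nu_i)$, and $G_i = G_{\nu_i}(r_i)$. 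Since $\theta\mapsto u$ bijects $[0,\infty)$, $I^{(\beta)}(x) = \tfrac{\beta}{2}\sup_u \tilde{I}(x,u)$ and we may henceforth analyze only $\tilde{I}$.

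Next I would compute derivatives. From the two-case formula, $\partial_u\tilde{J}(\nu,u,\mathscr{M}) = K_\nu(u)\vee\mathscr{M} - \tfrac{1}{u}$ (extending $K_\nu \equiv r_\nu$ past $G_\nu(r_\nu)$). Combining with the linearization $K_{\nu_1}(u) = K_{\nu_2}(u) + u$ (from $R_{\nu_1} = R_{\nu_2} + R_{\rho_{\text{sc}}}$ and $R_{\rho_{\text{sc}}}(u) = u$), a case split on the position of $u$ relative to $G_{\nu_1}(x)$ and $G_2$ gives
\[
    \partial_u\tilde{I}(x,u) = \begin{cases} 0 & \text{if } 0 \leq u \leq G_{\nu_1}(x), \\ x - f(u) & \text{if } G_{\nu_1}(x) \leq u \leq G_2, \\ x - u - r_2 & \text{if } u \geq G_2, \end{cases}
\]
where $f(u) := u + K_{\nu_2}(u)$. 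Continuity at the interfaces is automatic: $f(G_{\nu_1}(x)) = K_{\nu_1}(G_{\nu_1}(x)) = x$ and $f(G_2) = r_2 + G_2 = x_c$.

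The key fact is that $f$ on $(0,G_2]$ attains its unique global minimum at $u = G_1$, with value $r_1$. Writing $\phi(u) := -G_{\nu_2}'(K_{\nu_2}(u)) = \int\nu_2(\diff t)/(K_{\nu_2}(u)-t)^2$, we have $f'(u) = 1 - 1/\phi(u)$; since $K_{\nu_2}$ is strictly decreasing and $y\mapsto\int\nu_2(\diff t)/(y-t)^2$ is strictly decreasing on $(r_2,\infty)$, $\phi$ is strictly increasing on $(0,G_2)$, so $f'$ changes sign at most once. To locate that change, I differentiate the subordination relation $G_{\nu_1}(y) = G_{\nu_2}(y - G_{\nu_1}(y))$ to get $G_{\nu_1}'(y) = G_{\nu_2}'(\omega(y))/(1+G_{\nu_2}'(\omega(y)))$ with $\omega(y) = y - G_{\nu_1}(y)$; letting $y\downarrow r_1$ and using Biane's density regularity (so $G_{\nu_1}'(r_1^+) = -\infty$) forces $\phi(G_1) = 1$, hence $f'(G_1) = 0$ and $f(G_1) = K_{\nu_1}(G_1) = r_1$. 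Thus $f$ strictly decreases from $+\infty$ to $r_1$ on $(0,G_1]$ and strictly increases from $r_1$ to $x_c$ on $[G_1,G_2]$ (the latter piece empty when $G_1 = G_2$, where instead $\phi \leq 1$ throughout and $f$ is globally strictly decreasing with $\min f = x_c = r_1$). Consequently for $x > r_1$, $\tilde{I}(x,\cdot)$ vanishes on $[0,G_{\nu_1}(x)]$, strictly increases on $[G_{\nu_1}(x),u_x]$, and strictly decreases on $[u_x,\infty)$, so the unique maximizer $u_x$ is the unique solution in $(G_1,G_2)$ of $f(u) = x$ if $r_1 < x < x_c$, $u_x = G_2$ if $x = x_c$, and $u_x = x - r_2$ if $x \geq x_c$; for $x = r_1$, $\tilde{I}(r_1,\cdot)$ vanishes on $[0,G_1]$ and strictly decreases afterwards, so we set $u_{r_1} = G_1$ by convention.

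All remaining claims then follow. The piecewise description shows $x \mapsto u_x$ is continuous and strictly increasing on $[r_1,\infty)$, giving injectivity of $x \mapsto \theta_x = \tfrac{\beta}{2}u_x$ and, by uniqueness of the maximizer for $x \neq y$, the strict inequality $I^{(\beta)}(y) > I^{(\beta)}(y,\theta_x)$. The identity $x_c = f(G_2) \geq \min f = r_1$ is immediate, with equality iff $G_2$ minimizes $f$, i.e., iff $G_1 = G_2$. The formulas $\theta_{x_c} = \tfrac{\beta}{2}G_2$ (or $+\infty$ convention when $G_2 = +\infty$) and $\theta_x < \theta_{x_c}$ for $x < x_c$ are immediate from strict monotonicity. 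Goodness of $I^{(\beta)}$ follows from joint continuity of $\tilde{I}(x,u)$, continuity of $x\mapsto u_x$, the $+\infty$ extension below $r_1$, and the divergence $\tilde{I}(x) \sim \tfrac{1}{2}(x-r_2)^2$ as $x\to\infty$ visible from regime 3. I expect the main obstacle to be isolating $\phi(G_1) = 1$ (and its degeneration when $G_1 = G_2$); this is classical in free probability via subordination and Biane's edge regularity for $\rho_{\text{sc}}\boxplus\mu_D$, but requires care in handling the various limiting behaviors.
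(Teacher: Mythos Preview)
Your proposal is correct and follows essentially the same approach as the paper: both compute $\partial_\theta I(x,\theta)$ in three regimes, show that the middle piece $x - u - K_{\mu_D}(u)$ is strictly concave with its maximum at $u = G_{\rho_{\text{sc}}\boxplus\mu_D}(\mathtt{r}(\rho_{\text{sc}}\boxplus\mu_D))$ via the subordination identity $\int \mu_D(\diff t)/(\omega(r_1)-t)^2 = 1$, and then do casework on $x$ relative to $x_c$. The only cosmetic differences are that you reparametrize by $u = \tfrac{2}{\beta}\theta$ up front (so $I^{(2)} = 2I^{(1)}$ falls out immediately, whereas the paper checks it at the end) and that you obtain $\phi(G_1)=1$ by differentiating the subordination relation and invoking $G_{\nu_1}'(r_1^+) = -\infty$, while the paper cites the equivalent fact $v_{1,\mu_D}(F_{1,\mu_D}(r_1)) = 0$ from the Biane framework.
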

\begin{proof}
For the duration of this proof, we introduce the notation
\[
    \mu_D^{\text{sc}} := \rho_{\text{sc}} \boxplus \mu_D.
\]
It can be checked directly from the definition that that, for any compactly supported measure $\nu$ and any $\ms{M} \geq \mathtt{r}(\nu)$, 
\[
    \frac{\partial}{\partial \theta} J(\nu,\theta,\ms{M}) = \begin{cases} R_\nu\left(2\theta\right) & \text{if } 0 \leq 2 \theta \leq G_\nu(\ms{M}), \\ \ms{M}-\frac{1}{2\theta} & \text{if } 2\theta > G_\nu(\ms{M}). \end{cases}
\]
Notice that this is a continuous function of $\theta$. Furthermore, it is known \cite[Lemma 11]{GuiMai2018} that
\[
    G_{\mu_D^{\text{sc}}}(\mathtt{r}(\mu_D^{\text{sc}})) \leq \min(G_{\mu_D}(\mathtt{r}(\mu_D)),G_{\rho_{\text{sc}}}(\mathtt{r}(\rho_{\text{sc}}))) = \min(G_{\mu_D}(\mathtt{r}(\mu_D)),1).
\]
Since $G_{\nu}$ is decreasing on $(\mathtt{r}(\nu),+\infty)$, there are three (or two) phases of $\theta$ values:
\[
    \frac{\partial}{\partial \theta} I^{(1)}(x,\theta) = 
    \begin{cases} 
        R_{\mu_D^{\text{sc}}}\left(2\theta\right) - 2\theta - R_{\mu_D}\left(2\theta\right) = 0 & \text{if } 0 \leq 2\theta \leq G_{\mu_D^{\text{sc}}}(x), \\ 
        x - 2\theta - K_{\mu_D}\left(2\theta\right) & \text{if } G_{\mu_D^{\text{sc}}}(x) \leq 2\theta \leq G_{\mu_D}(\mathtt{r}(\mu_D)), \\ 
        x-2\theta - \mathtt{r}(\mu_D) & \text{if } 2\theta \geq G_{\mu_D}(\mathtt{r}(\mu_D)), 
    \end{cases}
\]
where the third case disappears if $G_{\mu_D}(\mathtt{r}(\mu_D)) = +\infty$ and the second case disappears if $x = \mathtt{r}(\mu_D^{\text{sc}})$ and $G_{\mu_D^{\text{sc}}}(\mathtt{r}(\mu_D^{\text{sc}})) = G_{\mu_D}(\mathtt{r}(\mu_D))$. Notice that this is a continuous function of $\theta \geq 0$, and that, if $G_{\mu_D^{\text{sc}}}(x) \leq \tfrac{2}{\beta}\theta \leq G_{\mu_D^{\text{sc}}}(\mathtt{r}(\mu_D^{\text{sc}}))$, we can in fact write
\[
    \frac{\partial}{\partial \theta} I^{(1)}(x,\theta) = x-K_{\mu_D^{\text{sc}}}\left(2\theta\right).
\]
In general we have
\[
    G_{\mu_D}(\mathtt{r}(\mu_D)) \in [G_{\mu_D^{\text{sc}}}(\mathtt{r}(\mu_D^{\text{sc}})), +\infty].
\]
For the purposes of our analysis, the endpoints of this interval are degenerate cases, and will be handled separately at the end. For now, assume that
\[
    G_{\mu_D}(\mathtt{r}(\mu_D)) \in (G_{\mu_D^{\text{sc}}}(\mathtt{r}(\mu_D^{\text{sc}})), +\infty).
\]
Then $\partial_\theta I^{(1)}(x,\theta)$ has three non-degenerate piecewise sections, and $x_c < \infty$, where we recall the threshold
\[
    x_c = \begin{cases} G_{\mu_D}(\mathtt{r}(\mu_D)) + \mathtt{r}(\mu_D) & \text{if } G_{\mu_D}(\mathtt{r}(\mu_D)) < \infty, \\ +\infty & \text{otherwise}.\end{cases}
\]
In the course of the casework, we will show that $x_c > \mathtt{r}(\mu_D^{\text{sc}})$ in this nondegenerate regime.
\begin{itemize}
\item \textbf{Case 1 ($x < x_c$):} First we study the interval $\theta \in (\frac{1}{2}G_{\mu_D^{\text{sc}}}(x),\frac{1}{2}G_{\mu_D}(\mathtt{r}(\mu_D)))$ and write the function $\partial_\theta I^{(1)}(x,\theta)$ as
\[
    \theta \mapsto f_x(\theta) = x-2\theta - K_{\mu_D}\left(2\theta\right)
\]
defined on this interval. We have
\[
    f''_x(\theta) 
    = 4 \cdot \frac{G''_{\mu_D}\left(K_{\mu_D}\left(2\theta\right)\right)}{\left(G'_{\mu_D}\left(K_{\mu_D}\left(2\theta\right)\right)\right)^3} = -8 \cdot \left( \int \frac{\mu_D(\diff t)}{\left(K_{\mu_D}\left(2\theta\right)-t\right)^2} \right)^{-3} \left( \int \frac{\mu_D(\diff t)}{\left(K_{\mu_D}\left(2\theta\right)-t\right)^3} \right) < 0
\]
since $2\theta < G_{\mu_D}(\mathtt{r}(\mu_D))$, so that $K_{\mu_D}(2\theta) > \mathtt{r}(\mu_D)$ and $\int \frac{\mu_D(\diff t)}{(K_{\mu_D}(2\theta)-t)^i} > 0$ for $i = 2, 3$. Thus $f_x$ is strictly concave.

Let us find out where it is maximized. Since
\[
    f'_x(\theta) = 2\left( \frac{1}{\int \frac{\mu_D(\diff t)}{(K_{\mu_D}(2\theta)-t)^2}} - 1 \right),
\]
we can rearrange
\begin{align*}
    \mathtt{r}(\mu_D^{\text{sc}}) &= K_{\mu_D^{\text{sc}}}(G_{\mu_D^{\text{sc}}}(\mathtt{r}(\mu_D^{\text{sc}}))) \\
    &= R_{\rho_{\text{sc}}}(G_{\mu_D^{\text{sc}}}(\mathtt{r}(\mu_D^{\text{sc}}))) + K_{\mu_D}(G_{\mu_D^{\text{sc}}}(\mathtt{r}(\mu_D^{\text{sc}}))) \\
    &= G_{\mu_D^{\text{sc}}}(\mathtt{r}(\mu_D^{\text{sc}})) + K_{\mu_D}(G_{\mu_D^{\text{sc}}}(\mathtt{r}(\mu_D^{\text{sc}})))
\end{align*}
to obtain
\[
    f'_x\left(\frac{1}{2}G_{\mu_D^{\text{sc}}}(\mathtt{r}(\mu_D^{\text{sc}}))\right) = 2\left( \frac{1}{\int \frac{\mu_D(\diff t)}{(\mathtt{r}(\mu_D^{\text{sc}}) - G_{\mu_D^{\text{sc}}}(\mathtt{r}(\mu_D^{\text{sc}})) - t)^2}} - 1\right).
\]
But it is known that
\[
    \int \frac{\mu_D(\diff t)}{(\mathtt{r}(\mu_D^{\text{sc}}) - G_{\mu_D^{\text{sc}}}(\mathtt{r}(\mu_D^{\text{sc}})) - t)^2} = 1.
\]
Indeed, using the notation and results of \cite[Proposition 2.1]{CapDonFerFev2011} (although the ideas date back to \cite{Bia1997}), the above statement is equivalent to the statement $v_{1,\mu_D}(F_{1,\mu_D}(\mathtt{r}(\mu_D^{\text{sc}}))) = 0$. But $F_{1,\mu_D}$ maps into $\overline{\{u+iv \in \C^+ : v > v_{1,\mu_D}(u)\}}$, and here $F_{1,\mu_D}(\mathtt{r}(\mu_D^{\text{sc}})) = \mathtt{r}(\mu_D^{\text{sc}}) - G_{\mu_D^{\text{sc}}}(\mathtt{r}(\mu_D^{\text{sc}}))$ is real; furthermore $v_{1,\mu_D}(u)$ is a continuous function \cite{Bia1997} of the real parameter $u$. Combined, these conditions force $v_{1,\mu_D}(F_{1,\mu_D}(\mathtt{r}(\mu_D^{\text{sc}}))) = 0$. But this means that
\[
    f'_x \left(\frac{1}{2}G_{\mu_D^{\text{sc}}}(\mathtt{r}(\mu_D^{\text{sc}}))\right) = 0.
\]

Thus we have shown that $f_x(\theta) = \partial_\theta I^{(1)}(x,\theta)$ is a strictly concave function on the open interval \linebreak $(\frac{1}{2}G_{\mu_D^{\text{sc}}}(x),\frac{1}{2}G_{\mu_D}(\mathtt{r}(\mu_D))$, taking a unique maximum value (which can be computed to be $x-\mathtt{r}(\mu_D^{\text{sc}})$) at the point $\theta = \frac{1}{2} G_{\mu_D^{\text{sc}}}(\mathtt{r}(\mu_D^{\text{sc}}))$. Its value at the left endpoint of the interval is $0$, and its value at the right endpoint of the interval is $x-x_c < 0$. In particular, since $f_x$ is decreasing on $(\frac{1}{2}G_{\mu_D^{\text{sc}}}(\mathtt{r}(\mu_D^{\text{sc}})),\frac{1}{2}G_{\mu_D}(\mathtt{r}(\mu_D)))$, taking the value $x-\mathtt{r}(\mu_D^{\text{sc}})$ on the left endpoint and value $x-x_c$ on the right endpoint, we have $x_c > \mathtt{r}(\mu_D^{\text{sc}})$ as claimed.

Now if $\theta \geq \frac{1}{2}G_{\mu_D}(\mathtt{r}(\mu_D))$, then
\[
    \partial_\theta I^{(1)}(x,\theta) = x-2\theta-\mathtt{r}(\mu_D) < x_c - G_{\mu_D}(\mathtt{r}(\mu_D)) - \mathtt{r}(\mu_D) = 0.
\]

There are two subcases here:
\begin{itemize}
\item \textbf{Subcase a ($x = \mathtt{r}(\mu_D^{\text{sc}})$):} 
Here, $f_x(\theta) = \partial_\theta I^{(1)}(x,\theta)$ takes maximum value $x-\mathtt{r}(\mu_D^{\text{sc}}) = 0$ on the open interval $(\frac{1}{2}G_{\mu_D^{\text{sc}}}(\mathtt{r}(\mu_D^{\text{sc}})),\frac{1}{2}G_{\mu_D}(\mathtt{r}(\mu_D)))$, and is negative on the interval $[\frac{1}{2}G_{\mu_D}(\mathtt{r}(\mu_D)),+\infty)$. Thus $I^{(1)}(\mathtt{r}(\mu_D^{\text{sc}})) = 0$.

\item \textbf{Subcase b ($x > \mathtt{r}(\mu_D^{\text{sc}})$):}
Here, the value of the function $f_x$ at $\theta = \frac{1}{2}G_{\mu_D^{\text{sc}}}(\mathtt{r}(\mu_D^{\text{sc}}))$ is $x-\mathtt{r}(\mu_D^{\text{sc}}) > 0$. Thus it vanishes at a unique point $\theta_x \in (\frac{1}{2}G_{\mu_D^{\text{sc}}}(\mathtt{r}(\mu_D^{\text{sc}})),\frac{1}{2}G_{\mu_D}(\mathtt{r}(\mu_D)))$. For such values of $x$, then, $I^{(1)}(x,\theta)$ vanishes for $\theta \in [0,\frac{1}{2}G_{\mu_D^{\text{sc}}}(x)]$; strictly increases for $\theta \in (\frac{1}{2}G_{\mu_D^{\text{sc}}},\theta_x)$; and strictly decreases for $\theta \in (\theta_x,+\infty)$. In particular $I^{(1)}(x) > 0$ for such $x$ values.
\end{itemize}
\item \textbf{Case 2 ($x \geq x_c$):} 
Here we can explicitly write
\begin{equation}
\label{eqn:explicit_thetax}
    \theta_x = \frac{1}{2}(x-\mathtt{r}(\mu_D)).
\end{equation}
The function $f_x$ defined above is still strictly concave on its domain and still vanishes at the left endpoint of this domain, but now its value at the right endpoint is nonnegative; thus $I^{(1)}(x,\theta)$ is strictly increasing for $\theta \in (\frac{1}{2}G_{\mu_D^{\text{sc}}}(x),\frac{1}{2}G_{\mu_D}(\mathtt{r}(\mu_D)))$. A simple analysis of $\partial_\theta I^{(1)}(x,\theta)$ for $\theta \geq \frac{1}{2}G_{\mu_D}(\mathtt{r}(\mu_D))$ shows that $\theta_x$ as defined above is, as claimed, the unique $\theta$ value that maximizes $I^{(1)}(x,\theta)$, and $I^{(1)}(x) > 0$.

In particular notice that
\[
    \theta_{x_c} = \frac{1}{2}(x_c-\mathtt{r}(\mu_D)) = \frac{1}{2}G_{\mu_D}(\mathtt{r}(\mu_D)).
\]
\end{itemize}
It remains only to show that $x_1 \neq x_2 \implies \theta_{x_1} \neq \theta_{x_2}$. If $x_1 < x_c \leq x_2$, then $\theta_{x_1}$ and $\theta_{x_2}$ as constructed above lie in disjoint intervals, so cannot be equal; and if $x_c \leq x_1, x_2$ then we can see $\theta_{x_1} \neq \theta_{x_2}$ from our explicit formula \eqref{eqn:explicit_thetax}. Thus we only need consider $x_1 < x_2 < x_c$. If $x_1 = \mathtt{r}(\mu_D^{\text{sc}})$, then $\theta_{x_1} = \frac{1}{2}G_{\mu_D^{\text{sc}}}(\mathtt{r}(\mu_D^{\text{sc}})) < \theta_{x_2}$ by construction; thus we can assume $\mathtt{r}(\mu_D^{\text{sc}}) < x_1 < x_2 < x_c$. But then $\theta_{x_1}$ and $\theta_{x_2}$ are defined on the common interval $(\frac{1}{2}G_{\mu_D^{\text{sc}}}(\mathtt{r}(\mu_D^{\text{sc}})), \frac{1}{2}G_{\mu_D}(\mathtt{r}(\mu_D)))$ as the unique points satisfying
\[
    2\theta_{x_1} + K_{\mu_D}\left(2\theta_{x_1}\right) = x_1 \neq x_2 = 2\theta_{x_2} + K_{\mu_D}\left(2\theta_{x_2}\right).
\]
Thus we must have $\theta_{x_1} \neq \theta_{x_2}$. 

Now we explain the necessary adjustments in the degenerate cases.
\begin{itemize}
\item \textbf{Degenerate Case 1 ($G_{\mu_D}(\mathtt{r}(\mu_D)) = G_{\mu_D^{\text{sc}}}(\mathtt{r}(\mu_D^{\text{sc}}))$):}

The proof of \cite[Lemma 11]{GuiMai2018} shows that $\omega(\mathtt{r}(\mu_D^{\text{sc}})) \geq \mathtt{r}(\mu_D)$, where $\omega$ is defined (see \cite[Proposition 2.1]{CapDonFerFev2011}) as $\omega(z) = z - G_{\mu_D^{\text{sc}}}(z)$; hence 
\begin{align*}
    x_c &= \mathtt{r}(\mu_D) + G_{\mu_D}(\mathtt{r}(\mu_D)) = \mathtt{r}(\mu_D) + G_{\mu_D^{\text{sc}}}(\mathtt{r}(\mu_D^{\text{sc}})) \\
    &= \mathtt{r}(\mu_D) + \mathtt{r}(\mu_D^{\text{sc}}) - \omega(\mathtt{r}(\mu_D^{\text{sc}})) \leq \mathtt{r}(\mu_D^{\text{sc}})
\end{align*}
and all $x$ are ``at least critical.''
\begin{itemize}
\item \textbf{Degenerate Subcase a ($x = \mathtt{r}(\mu_D^{\text{sc}})$):} Then we only have
\begin{align*}
    \partial_\theta I^{(1)}(\mathtt{r}(\mu_D^{\text{sc}}),\theta) = \begin{cases} 0 & \text{if } 0 \leq 2\theta \leq G_{\mu_D}(\mathtt{r}(\mu_D)) \\ \mathtt{r}(\mu_D^{\text{sc}}) - 2\theta - \mathtt{r}(\mu_D) & \text{if } 2\theta \geq G_{\mu_D}(\mathtt{r}(\mu_D)). \end{cases}
\end{align*}
From the first line of this display and from the equality $G_{\mu_D}(\mathtt{r}(\mu_D)) = G_{\mu_D^{\text{sc}}}(\mathtt{r}(\mu_D^{\text{sc}}))$ we have 
\begin{equation}
\label{eqn:degenerate}
    0 = R_{\mu_D^{\text{sc}}}(G_{\mu_D^{\text{sc}}}(\mathtt{r}(\mu_D^{\text{sc}}))) - G_{\mu_D}(\mathtt{r}(\mu_D)) - R_{\mu_D}(G_{\mu_D}(\mathtt{r}(\mu_D))) = \mathtt{r}(\mu_D^{\text{sc}}) - G_{\mu_D}(\mathtt{r}(\mu_D)) - \mathtt{r}(\mu_D).
\end{equation}
On the one hand, \eqref{eqn:degenerate} tells us that 
\[
    x_c = \mathtt{r}(\mu_D^{\text{sc}})
\]
so that by convention 
\[
    \theta_{x_c} = \frac{1}{2}G_{\mu_D^{\text{sc}}}(\mathtt{r}(\mu_D^{\text{sc}})) = \frac{1}{2}G_{\mu_D}(\mathtt{r}(\mu_D))
\]
as claimed. On the other hand, if $2\theta \geq G_{\mu_D}(\mathtt{r}(\mu_D))$ then \eqref{eqn:degenerate} tells us that
\begin{align*}
    \partial_\theta I^{(1)}(\mathtt{r}(\mu_D^{\text{sc}},\theta)) &= \mathtt{r}(\mu_D^{\text{sc}}) - 2 \theta - \mathtt{r}(\mu_D) \leq \mathtt{r}(\mu_D^{\text{sc}}) - G_{\mu_D}(\mathtt{r}(\mu_D)) - \mathtt{r}(\mu_D) = 0.
\end{align*}
So $\partial_\theta I^{(1)}(\mathtt{r}(\mu_D^{\text{sc}})) \leq 0$ for all $\theta$ and $I^{(1)}(\mathtt{r}(\mu_D^{\text{sc}})) = 0$ as claimed.
\item \textbf{Degenerate Subcase b ($x > \mathtt{r}(\mu_D^{\text{sc}})$):} Then $f_x$ as above is defined and strictly concave on a nondegenerate interval; it vanishes at the left endpoint of this interval; it takes a positive maximum (namely $x-\mathtt{r}(\mu_D^{\text{sc}})$) at the right endpoint of this interval. Thus the analysis of Case 2 above holds to show that $\theta_x$ is given by Equation \eqref{eqn:explicit_thetax}.
\end{itemize}
The argument above for injectivity goes through, since Equation \eqref{eqn:explicit_thetax} works for all $x$ values.
\item \textbf{Degenerate Case 2 ($G_{\mu_D}(\mathtt{r}(\mu_D)) = +\infty$):} Here $x_c = +\infty$, and all $x$ values are subcritical. The function $f_x$ from Case 1 is then defined and strictly concave on the interval $(\frac{1}{2}G_{\mu_D^{\text{sc}}}(x),+\infty)$. It has a unique maximum at $\frac{1}{2}G_{\mu_D^{\text{sc}}}(\mathtt{r}(\mu_D^{\text{sc}}))$, where its value is positive; and strict concavity tells us $\lim_{\theta \to +\infty}f_x(\theta) = -\infty$; thus $f_x$ still has a unique zero on its domain, which we still call $\theta_x$. The argument above for injectivity goes through.
\end{itemize}
Now we show $I^{(2)} = 2I^{(1)}$. If $x < x_c$, then we showed that $\theta_x$ is defined implicitly by 
\[
    \frac{2}{\beta} \theta_x + K_{\mu_D}\left(\frac{2}{\beta} \theta_x\right) = x \qquad \text{subject to} \qquad \frac{2}{\beta}\theta_x \in \left(G_{\mu_D^{\text{sc}}}(\mathtt{r}(\mu_D^{\text{sc}})),G_{\mu_D}(\mathtt{r}(\mu_D))\right).
\]
If $x \geq x_c$, then we have
\[
    \frac{2}{\beta}\theta_x = x-\mathtt{r}(\mu_D).
\]
Notice that $\frac{2}{\beta}\theta_x$ is independent of $\beta$. But then the definition \eqref{eqn:limitJ} gives us
\[
    J^{(\beta=2)}(\nu,\theta_x,\ms{M}) = 2J^{(\beta=1)}(\nu,\theta_x,\ms{M})
\]
from which the claim follows.
\end{proof}

\begin{prop}
The function $I^{(\beta)}(\cdot)$ is a good rate function.
\end{prop}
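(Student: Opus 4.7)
The plan is to verify the two standard requirements of a good rate function: lower semicontinuity, and that the sublevel sets $\{x \in \R : I^{(\beta)}(x) \leq \alpha\}$ are compact for every $\alpha \geq 0$. Since $I^{(\beta)}$ is defined as a supremum over $\theta \geq 0$ of functions $I^{(\beta)}(x,\theta)$ (extended to $+\infty$ for $x<\mathtt{r}(\rho_{\text{sc}} \boxplus \mu_D)$), the lower semicontinuity will follow from continuity of each $x \mapsto I^{(\beta)}(x,\theta)$, while compactness will follow from lower semicontinuity together with a coercivity estimate $I^{(\beta)}(x) \to \infty$ as $x \to \infty$. Since $I^{(\beta)} = +\infty$ to the left of $\mathtt{r}(\rho_{\text{sc}} \boxplus \mu_D)$, no coercivity is needed there.

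For lower semicontinuity, I would fix $\theta \geq 0$ and observe that only the first summand $J(\rho_{\text{sc}} \boxplus \mu_D,\theta,x)$ of $I^{(\beta)}(x,\theta)$ depends on $x$. From the piecewise definition \eqref{eqn:limitJ}, this summand is constant in $x$ as long as $\frac{2}{\beta}\theta \leq G_{\rho_{\text{sc}} \boxplus \mu_D}(x)$, and otherwise equals $\theta x - \frac{\beta}{2}[1+\log(\tfrac{2}{\beta}\theta)] - \frac{\beta}{2}\int\log(x-y)(\rho_{\text{sc}} \boxplus \mu_D)(\diff y)$. Both pieces are manifestly continuous in $x$, and they agree at the junction $x = K_{\rho_{\text{sc}} \boxplus \mu_D}(\tfrac{2}{\beta}\theta)$ by direct substitution. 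The only delicate point is continuity as $x \downarrow \mathtt{r}(\rho_{\text{sc}} \boxplus \mu_D)$, but the logarithmic integral remains finite there because $\rho_{\text{sc}} \boxplus \mu_D$ has a bounded density by \cite[Corollary 5]{Bia1997}. Extending $I^{(\beta)}(\cdot,\theta)$ by $+\infty$ on $(-\infty,\mathtt{r}(\rho_{\text{sc}} \boxplus \mu_D))$ yields a lower semicontinuous function on $\R$, and $I^{(\beta)}$, being the pointwise supremum of such functions, is itself lower semicontinuous.

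For coercivity, I would fix any $\theta_0 > 0$ and use $I^{(\beta)}(x) \geq I^{(\beta)}(x,\theta_0)$. Since $G_{\rho_{\text{sc}} \boxplus \mu_D}(x) \downarrow 0$ as $x \to \infty$, for $x$ sufficiently large we have $\frac{2}{\beta}\theta_0 \geq G_{\rho_{\text{sc}} \boxplus \mu_D}(x)$, placing us in the second case of \eqref{eqn:limitJ}. The remaining terms $-\tfrac{\theta_0^2}{\beta}$ and $J(\mu_D,\theta_0,\mathtt{r}(\mu_D))$ are constants independent of $x$, so
\[
    I^{(\beta)}(x,\theta_0) = \theta_0 x - \frac{\beta}{2}\int\log(x-y)(\rho_{\text{sc}} \boxplus \mu_D)(\diff y) + O(1)
\]
as $x \to \infty$. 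The integral grows like $\log x$ since $\rho_{\text{sc}} \boxplus \mu_D$ is compactly supported. Therefore $I^{(\beta)}(x,\theta_0) \geq \tfrac{\theta_0}{2} x$ for $x$ large enough, which forces $I^{(\beta)}(x) \to \infty$.

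Combining these two facts, each sublevel set $\{I^{(\beta)} \leq \alpha\}$ is closed (by lower semicontinuity) and bounded (contained in $[\mathtt{r}(\rho_{\text{sc}} \boxplus \mu_D),M_\alpha]$ for some finite $M_\alpha$ by coercivity), hence compact. The only place one might stumble is verifying boundedness and continuity of the logarithmic integral near $\mathtt{r}(\rho_{\text{sc}} \boxplus \mu_D)$, but this is immediate from Biane's $L^\infty$ density bound on free convolutions with the semicircle law.
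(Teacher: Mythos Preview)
Your argument is correct. The lower-semicontinuity step matches the paper's: both argue that $x \mapsto J(\rho_{\text{sc}} \boxplus \mu_D,\theta,x)$ is continuous on $[\mathtt{r}(\rho_{\text{sc}} \boxplus \mu_D),\infty)$ for each fixed $\theta$, so the supremum is lower semicontinuous, and then handle the boundary point $\mathtt{r}(\rho_{\text{sc}} \boxplus \mu_D)$ separately. One small omission is that you never verify nonnegativity (part of the definition of a rate function); this is immediate from $I^{(\beta)}(x,0) = 0$, but it is worth saying.

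Where you diverge from the paper is in establishing compactness of sublevel sets. You prove coercivity by hand, bounding $I^{(\beta)}(x)$ below by $I^{(\beta)}(x,\theta_0)$ for a fixed $\theta_0 > 0$ and reading off linear growth from the second branch of \eqref{eqn:limitJ}. The paper instead invokes the abstract principle that the rate function of a weak LDP for an exponentially tight family is automatically good (\cite[Lemma 1.2.18]{DemZei1998}), so goodness comes for free from the work already done in Section \ref{sec:concentration}. Your route is more elementary and self-contained, and does not rely on having already proved the weak LDP; the paper's route is shorter but leans on the surrounding machinery.
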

\begin{proof}
First, for any compactly-supported measure $\mu$ and any $\lambda \geq \mathtt{r}(\mu)$, we have $J(\mu,0,\lambda) = 0$; hence $I^{(1)}(x)$ is nonnegative.

For every fixed $\theta$, dominated convergence tells us that $J(\rho_{\text{sc}} \boxplus \mu_D, \theta, x)$ is a continuous function of $x > \mathtt{r}(\rho_{\text{sc}})$; hence $I^{(1)}(\cdot)$ is lower semi-continuous at such $x$ values. It is also lower semi-continuous for $x < \mathtt{r}(\rho_{\text{sc}} \boxplus \mu_D)$, where its value is infinite. Finally, since $I^{(1)}(\cdot)$ is nonnegative and vanishes at $\mathtt{r}(\rho_{\text{sc}} \boxplus \mu_D)$, it is also lower semi-continuous there. 

Hence $I^{(1)}(\cdot)$ is a rate function. But since $I^{(1)}(\cdot)$ is the rate function for a weak LDP of an exponentially tight family, it is classical (see, e.g., \cite[Lemma 1.2.18]{DemZei1998}) that $I^{(1)}$ is in fact good.
\end{proof}

\bibliographystyle{apalike}
\bibliography{ldpbib.bib}

\end{document}